\numberwithin{subsection}{section}
\numberwithin{equation}{subsection}
\newcommand{\eq}[2]{\begin{equation}\label{#1}#2 \end{equation}}
\newcommand{\eqn}[1]{\begin{equation*}#1\end{equation*}}
\newcommand{\eqa}[2]{\begin{eqnarray}\label{#1}#2 \end{eqnarray}}
\newcommand{\eqna}[1]{\begin{eqnarray*}#1\end{eqnarray*}}
\newcommand{\ml}[2]{\begin{multline}\label{#1}#2 \end{multline}}
\newcommand{\mln}[1]{\begin{multline*}#1\end{multline*}}
\newcommand{\ga}[2]{\begin{gather}\label{#1}#2 \end{gather}}
\newcommand{\gan}[1]{\begin{gather*}#1\end{gather*}}
\theoremstyle{plain}
\newtheorem{thm}[subsection]{Theorem}
\newtheorem{lem}[subsection]{Lemma}
\newtheorem{cor}[subsection]{Corollary}
\newtheorem{prop}[subsection]{Proposition}
\theoremstyle{definition}
\newtheorem{defn}[subsection]{Definition}
\newtheorem{rmk}[subsection]{Remark}
\newcommand{\DS}{\displaystyle}
\newcommand{\num}{n$^{\circ}\,$}
\newcommand{\gr}{{\rm gr}}
\newcommand{\surj}{\twoheadrightarrow}
\newcommand{\inj}{\hookrightarrow}
\DeclareMathOperator{\Spec}{Spec}
\newcommand{\can}{\mathrm{can}}
\newcommand{\sA}{{\mathcal A}}
\newcommand{\sD}{{\mathcal D}}
\newcommand{\sE}{{\mathcal E}}
\newcommand{\sF}{{\mathcal F}}
\newcommand{\sH}{{\mathcal H}}
\newcommand{\sI}{{\mathcal I}}
\newcommand{\sJ}{{\mathcal J}}
\newcommand{\sK}{{\mathcal K}}
\newcommand{\sN}{{\mathcal N}}
\newcommand{\sO}{{\mathcal O}}
\newcommand{\sP}{{\mathcal P}}
\newcommand{\sR}{{\mathcal R}}
\newcommand{\sT}{{\mathcal T}}
\renewcommand{\H}{{\mathbb H}}
\renewcommand{\L}{{\mathbb L}}
\newcommand{\N}{{\mathbb N}}
\renewcommand{\P}{{\mathbb P}}
\newcommand{\Q}{{\mathbb Q}}
\newcommand{\RR}{\mathbb{R}}
\newcommand{\U}{{\mathbb U}}
\newcommand{\V}{{\mathbb V}}
\newcommand{\W}{{\mathbb W}}
\newcommand{\Y}{{\mathbb Y}}
\newcommand{\Z}{{\mathbb Z}}
\newcommand{\fU}{\mathfrak{U}}
\newcommand{\fa}{\mathfrak{a}}
\newcommand{\tu}{\tilde{u}}
\newcounter{romain}[subsection]
\newcommand{\romain}
{\stepcounter{romain}
\noindent\makebox[1.5cm][r]{{\normalfont(\roman{romain})}\hspace{0.3cm}}}
\newcounter{numero}[subsection]
\newcommand{\numero}
{\stepcounter{numero}
\noindent\makebox[1.4cm][r]{\arabic{numero})\hspace{0.3cm}}}
\newcounter{alphab}[subsection]
\newcommand{\alphab}
{\stepcounter{alphab}
\noindent\hspace{0.5cm}{\normalfont\alph{alphab})}\hspace{0.3cm}}
\newcommand{\resp}{resp.\ }
\newcommand{\Id}{\mathrm{Id}}
\newcommand{\Frac}{\mathrm{Frac}}
\newcommand{\Ker}{\mathrm{Ker}}
\newcommand{\Fil}{\mathrm{Fil}}
\newcommand{\tor}{\mathcal{T}\!or}
\newcommand{\End}{\mathcal{E}\mspace{-1mu}nd}
\DeclareMathOperator{\Spf}{Spf}
\DeclareMathOperator{\Spm}{Spm}
\newcommand{\spm}{\mathrm{sp}}
\newcommand{\Zar}{_{\mathrm{Zar}}}
\newcommand{\cris}{_{\mathrm{crys}}}
\newcommand{\Cris}{\mathrm{Crys}}
\newcommand{\rig}{_{\mathrm{rig}}}
\newcommand{\rigc}{_{\mathrm{rig,c}}}
\newcommand{\tot}{_{\mathrm{t}}}
\newcommand{\sbul}{\raisebox{.1mm}{\tiny$\mspace{2mu}\bullet$}}
\newcommand{\Omd}[1]{\Omega^{\sbul}_{#1}}
\newcommand{\WOm}[3]{W_{#1}\Omega^{#2}_{#3}}
\newcommand{\WOmd}[2]{W_{#1}\Omega^{\sbul}_{#2}}
\newcommand{\Db}{D^{\mathrm{b}}}
\newcommand{\sPh}{\widehat{\sP}{}}
\newcommand{\sEh}{\widehat{\sE}}
\newcommand{\sPb}{\overline{\sP}{}}
\newcommand{\sPt}{\widetilde{\sP}{}}
\newcommand{\sAt}{\widetilde{\sA}{}}
\newcommand{\sJm}{\sJ^{(m)}}
\newcommand{\hbul}{^{\sbul}}
\newcommand{\lbul}{_{\sbul}}
\newcommand{\otimesh}{\widehat{\otimes}}
\newcommand{\otimesL}{\stackrel{\L}{\otimes}}
\newcommand{\ot}{\overline{t}}
\renewcommand{\tt}{\tilde{t}}
\newcommand{\uf}{\underline{f}}
\newcommand{\uk}{\underline{k}}
\newcommand{\up}{\underline{p}}
\newcommand{\ualpha}{\underline{\alpha}}
\newcommand{\uGamma}{\underline{\Gamma}}
\newcommand{\lra}{\longrightarrow}
\newcommand{\xra}[1]{\xrightarrow{#1}}
\newcommand{\riso}{\xrightarrow{\ \sim\ \,}}
\newcommand{\sPbp}[2]{\sPb\mspace{1mu}'{}^{\mspace{2mu}(#1)}_{\hspace{-1.5mm}#2}}
\newcommand{\rhob}{\overline{\rho}{\mspace{1mu}}}
\newcommand{\rhoh}{\hat{\rho}}
\newcommand{\rhot}{\tilde{\rho}}
\newcommand{\pih}{\hat{\pi}}
\newcommand{\sigmat}{\tilde{\sigma}}
\newcommand{\oF}{\overline{F}}
\newcommand{\EWOm}[3]{\widehat{#1}^W\mspace{1mu}\otimesh_{W\sO_#3}\WOm{}{#2}{#3}}
\newcommand{\cc}{\mathrm{c}}
\renewcommand{\)}{{\normalfont{)}}}
\newcommand{\nref}[1]{{\normalfont{\ref{#1}}}}
\begin{document}

\title[Rigid Cohomology and de Rham-Witt complexes]{
Rigid Cohomology and de Rham-Witt complexes}
\author{Pierre Berthelot}
\address{IRMAR, Universit\'e de Rennes 1,
Campus de Beaulieu,
35042 Rennes cedex, France
}
\email{pierre.berthelot@univ-rennes1.fr}

\date{\today}  

\vspace{-1cm}

\begin{abstract}
Let $k$ be a perfect field of characteristic $p > 0$, $W_n = W_n(k)$. For separated
$k$-schemes of finite type, we explain how rigid cohomology with compact supports can
be computed as the cohomology of certain de Rham-Witt complexes with coefficients. This
result generalizes the classical comparison theorem of Bloch-Illusie for proper and
smooth schemes. In the proof, the key step is an extension of the Bloch-Illusie theorem
to the case of cohomologies relative to $W_n$ with coefficients in a crystal that 
is only supposed to be flat over $W_n$.
\end{abstract}

\subjclass{}

\maketitle
\vspace{-0.2cm}

\begin{flushright}
\textit{Dedicated to Francesco Baldassarri,\\
on his sixtieth birthday}
\end{flushright}

\setcounter{tocdepth}{1}

\tableofcontents
\vspace{-0.7cm}

\section{Introduction}
\medskip\setcounter{subsection}{1}

Let $k$ be a perfect field of characteristic $p > 0$, $W_n = W_n(k)$ (for all $n \geq
1$), $W = W(k)$, $K = \Frac(W)$. If $X$ is a proper and smooth scheme over $k$, and if 
$\WOmd{}{X} = \varprojlim_n \WOmd{n}{X}$ denotes the de Rham-Witt complex of $X$, the
classical comparison theorem between crystalline and de Rham-Witt cohomologies
(\cite[III, Th. 2.1]{Bl77}, \cite[II, Th. 1.4]{Il79}) provides canonical isomorphisms
\ga{fCrisndRW}{H^{\ast}\cris(X/W_n)\ \riso\ H^{\ast}(X, \WOmd{n}{X}), \\
H^{\ast}\cris(X/W)\ \riso\ H^{\ast}(X, \WOmd{}{X}), \label{fCrisdRW} \\
H^{\ast}\cris(X/W)_K\ \riso\ H^{\ast}(X, \WOmd{}{X,K}), \label{fCrisKdRW} }
where the subscript ${}_K$ denotes tensorization with $K$. In particular, the latter 
allows to study the slope decomposition of crystalline cohomology under the Frobenius 
action, thanks to the degeneracy of the spectral sequence defined by the filtration of
$\WOmd{}{X}$ by the subcomplexes $\WOm{}{\geq i}{X}$ \cite[II, Cor.\ 3.5]{Il79}.

In this article, our main goal is to generalize the isomorphism \eqref{fCrisKdRW} to
the case of a separated $k$-scheme of finite type. Then $H^*\cris(X/W)_K$ is no longer
in general a good cohomology theory, but we may use instead rigid cohomology with
compact supports, which coincides with $H^*\cris(X/W)_K$ when $X$ is proper and smooth,
while retaining all the standard properties of a topological cohomology theory. The key
case is the case of a proper $k$-scheme, as the non proper case can be reduced to the
proper one using a mapping cone description. We will therefore assume for the rest of
the introduction that $X$ is proper.

When $X$ is singular, the classical theory of the de Rham-Witt complex can no longer be
directly applied to $X$. Instead, we will consider a closed immersion of $X$ into a
smooth $k$-scheme $Y$ of finite type, and we will use on $Y$ de Rham-Witt complexes
with coefficients in crystals, as introduced by \'Etesse in \cite{Et88}. More
precisely, we will construct on $Y$ a $W\sO_{Y,K}$-algebra $\sA^W_{X,Y}$, supported in
$X$ and closely related to the algebra of analytic functions on the tube of $X$ in
$\Y$ when $Y$ admits a lifting as a smooth $p$-adic formal scheme $\Y$ over $W$. This
algebra is endowed with a de Rham-Witt connection, which extends so as to define a
complex $\sA^W_{X,Y} \otimesh_{W\sO_Y} \WOmd{}{Y}$ (the tensor product being completed
for the canonical topology of the de Rham-Witt complex). Our main result, which is part
of Theorem \ref{ssMain}, is then the existence of a functorial isomorphism
\eq{fMain0}{ H^{\ast}\rig(X/K) \riso H^{\ast}(Y, \sA^W_{X,Y} \otimesh_{W\sO_Y}
\WOmd{}{Y}), }
which coincides with \eqref{fCrisKdRW} when $X$ is smooth and $Y = X$. 

A key ingredient in the proof of Theorem \ref{ssMain} is the use of Theorem
\ref{ssCrysnEtodRWE} and of its Corollary \ref{ssCrysEtodRWE}, which provide
generalizations of the comparison isomorphisms \eqref{fCrisndRW} and \eqref{fCrisdRW}
to the case of cohomologies of a smooth $k$-scheme $Y$ with coefficients in a crystal
in $\sO_{Y/W_n}$-modules (resp.\ $\sO_{Y/W}$-modules) $\sE$. Such results have been
previously obtained by \'Etesse \cite[Th\'eor\`eme 2.1]{Et88} and Langer-Zink
\cite[Theorem 3.8]{LZ04} when $\sE$ is flat over $\sO_{Y/W_n}$ (resp.\ $\sO_{Y/W}$).
However, this assumption is not verified in our situation, since we work on the smooth
scheme $Y$ with crystals supported in a closed subscheme $X \subset Y$. We give here a
proof of the comparison theorem with coefficients in $\sE$ that only requires $\sE$ to
be flat over $W_n$, in the sense of Definition \ref{ssFlatCryst} (resp.\ quasi-coherent
and flat relative to $W$). It provides under these assumptions functorial isomorphisms
\ga{fCrisndRWE}{H^{\ast}\cris(Y/W_n, \sE)\ \riso\ 
H^{\ast}(Y, \sE_n^W \otimes_{W_n\sO_Y} \WOmd{n}{Y}), \\
H^{\ast}\cris(Y/W, \sE)\ \riso\ H^{\ast}(Y, \sEh^W \otimesh_{W_n\sO_Y} \WOmd{}{Y}), 
\label{fCrisdRWE} }
where $\sE_n^W$ is the evaluation of the crystal $\sE$ on the PD-thickening 
$(Y,W_nY)$ (resp.\ $\sEh^W = \varprojlim_n \sE_n^W$). As a first application, we get that,
when the immersion $X \inj Y$ is regular (in particular, when $X$ is smooth), there 
exists functorial isomorphisms
\ga{fCrisndRWY}{ H^{\ast}\cris(X/W_n) \riso H^{\ast}(Y, \sP_{X,Y,n}^W \otimes_{W_n\sO_Y}
\WOmd{n}{Y}), \\
H^{\ast}\cris(X/W) \riso H^{\ast}(Y, \sPh_{X,Y}^W \otimesh_{W\sO_Y} \WOmd{}{Y}), }
where $\sP_{X,Y,n}^W$ is the divided power envelope of $\Ker(W_n\sO_Y \surj W_n\sO_X)$
with compatibility with the canonical divided powers of $VW_{n-1}\sO_Y$, and $\sPh_{X,Y}^W =
\varprojlim_n \sP_{X,Y,n}^W$.

Note that, even if $X$ is smooth, the possibility of computing the crystalline
cohomology of $X$ using the de Rham-Witt complex of a smooth embedding was not
previously known. Our results imply in particular that, as objects of the derived
category $\Db(X,K)$, the complexes $\sP_{X,Y,n}^W \otimes_{W_n\sO_Y} \WOmd{n}{Y}$,
$\sPh_{X,Y}^W \otimesh_{W\sO_Y} \WOmd{}{Y}$, and $\sA^W_{X,Y} \otimesh_{W\sO_Y}
\WOmd{}{Y}$ do not depend, up to canonical isomorphism, on the embedding $X \inj Y$. It
would be interesting to have a direct proof of this fact in the form of appropriate
Poincar\'e Lemmas.
\medskip

We now briefly describe the content of each section. 

In section \ref{Torind}, we prove some Tor-independence properties of the sheaves of
Witt differentials $\WOm{n}{j}{Y}$ on a smooth $k$-scheme $Y$, and, more generally, of
the graded modules associated to their canonical filtration or to their $p$-adic
filtration. Specifically, we show in Theorems \ref{ssTorind2} and \ref{ssTorind3} the
vanishing of the higher Tor's involving such sheaves and the evaluation on the
PD-thickening $(Y,W_nY)$ of a crystal on $Y/W_n$ that is flat over $W_n$ -- a result
that may seem surprising at first sight, given the intricate structure of the sheaves
of Witt differentials. The flatness assumption allows to reduce to similar statements
for the reduction modulo $p$ of the crystal. The key point is then the existence of
a filtration of the crystal such that the corresponding graded pieces have
$p$-curvature $0$. Thanks to Cartier's descent, this allows to write them as Frobenius
pullbacks, and it is then possible to conclude using the local freeness results proved 
in \cite{Il79} when the graded modules associated to $\WOm{n}{j}{Y}$ are viewed as
$\sO_Y$-modules through an appropriate Frobenius action.

We use these results in section \ref{dRWCoeff} to prove the comparison theorem between
crystalline and de Rham-Witt cohomologies with coefficients in a crystal that is flat
over $W_n$ (Theorem \ref{ssCrysnEtodRWE}). As in the constant coefficient case, the
proof proceeds by reduction to the associated graded complexes for the $p$-adic and 
for the canonical filtration. The Tor-independence results of the previous sections
allow to reduce to the fact that, in the constant coefficient case, one gets
quasi-isomorphisms between complexes that can be viewed as strictly perfect complexes
of $\sO_Y$-modules when $\sO_Y$ acts through an appropriate power of Frobenius. The
section ends with the above mentionned application to crystalline cohomology for
regularly embedded subschemes.

We begin section \ref{RigCoh} by recalling the definition of rigid cohomology for a
proper $k$-scheme $X$. Given a closed immersion of $X$ in a smooth $p$-adic formal
scheme $\P$ over $W$, let $\sA_{X,\P}$ be the direct image by specialization of the
sheaf of analytic functions on the tube of $X$ in $\P$. We explain how
$\sA_{X,\P}$ can be identified with the inverse limit of the completed divided power
envelopes (tensorized with $K$) of the ideals of the infinitesimal neighbourhoods of
$X$ in $\P$. We also give a variant of this result in which these envelopes are
replaced by their quotients by the ideal of $p$-torsion sections. If $X$ is proper over
$k$ and is embedded as a closed subscheme in a smooth $k$-scheme $Y$, we derive from
this construction an isomorphism between the rigid cohomology of $X$ and the derived
inverse limit of the crystalline cohomologies (tensorized with $K$) of the
infinitesimal neighbourhoods of $X$ in $Y$ (Theorem \ref{ssCompRigCrys}).

In section \ref{sProofMain}, we keep these last hypotheses, and we use the crystalline
nature of the previous constructions to define two inverse systems of
$W\sO_{Y,K}$-algebras canonically associated to $X \inj Y$, with isomorphic inverse
limits. By definition, this common inverse limit is the $W\sO_{Y,K}$-algebra
$\sA^W_{X,Y}$ entering in the isomorphism \eqref{fMain0}. To define \eqref{fMain0} and 
to prove Theorem \ref{ssMain},
we combine the isomorphism of the previous section, which identifies the rigid
cohomology of $X$ with the limit of the crystalline cohomologies of its infinitesimal
neighbourhoods in $Y$, with an isomorphism derived from the comparison theorem
\ref{ssCrysnEtodRWE}, which gives an identification of this limit with the cohomology
of the complex $\sA^W_{X,Y\,} \otimesh_{W\sO_Y} \WOmd{}{Y}$. The first identification
uses the description of $\sA^W_{X,Y}$ as a limit of PD-envelopes, while, because of the
$W_n$-flatness assumption in Theorem \ref{ssCrysnEtodRWE}, the second one uses the
second description given in section \ref{RigCoh}, based on $p$-torsion free quotients
of PD-envelopes. We end the section by extending \eqref{fMain0} to rigid cohomology
with compact supports for open subschemes of $X$.

To conclude this article, we explain the relation between the isomorphism 
\eqref{fMain0} and the isomorphism \cite[(1.3)]{BBE}, which identifies the slope $< 1$ 
part of rigid cohomology with Witt vector cohomology. As a consequence, we obtain that 
the part of slope $\geq 1$ of $H^*\rig(X/K)$ can be identified with the cohomology of a
subcomplex of $\sA^W_{X,Y\,} \otimesh_{W\sO_Y} \WOmd{}{Y}$. We hope that Theorem
\ref{ssMain} can also be used to provide a description of parts of higher slopes in
$H^*\rig(X/K)$, but this is still an open question at this point.
\medskip

\noindent \textbf{Acknowledgements}

A first version of some of the results of this article was obtained while working on
the joint article \cite{BBE} with S.\ Bloch and H.\ Esnault. I thank them heartily for
their inspiration, as well as for many stimulating discussions.

The first lecture I gave on these topics took place during a workshop on rigid
cohomology and $F$-isocrystals organized jointly with F.\ Baldassarri in Rennes in June
2005. It is a great pleasure for me to dedicate this article to him as an expression of
my gratitude for many years of fruitful collaboration.

\medskip 

\noindent \textbf{General conventions}

\numero In the whole article, we denote by $k$ a perfect field of characteristic $p$, by
$W_n = W_n(k)$ (for $n \geq 1$) and $W = W(k)$ the usual rings of Witt vectors with
coefficients in $k$, and by $K$ the fraction field of $W$. Formal schemes over $W$ 
are always supposed to be $p$-adic formal schemes.

\numero If $Y$ is a $k$-scheme, we call \textit{crystal on $Y/W_n$} (resp.\
$Y/W$) a crystal in $\sO_{Y/W_n}$-modules (resp.\ $\sO_{Y/W}$-modules) on the usual
crystalline site $\Cris(Y/W_n)$ (resp.\ $\Cris(Y/W)$) \cite[Def.\ 6.1]{BO}.

\numero From section \ref{dRWCoeff} till the end of the article, we assume for
simplicity that all schemes and formal schemes under consideration are quasi-compact
and separated.

\medskip
\section{Tor-independence properties of the de~Rham-Witt complex}\label{Torind}
\smallskip

Let $Y$ be a smooth $k$-scheme. We prove in this section Tor-independence results
between the sheaves of de Rham-Witt differential forms $\WOm{n}{j}{Y}$ and the
evaluation on the Witt thickenings $(Y,W_nY)$ of a crystal on $Y/W_n$ that is flat
over $W_n$ (see Definition \ref{ssFlatCryst}).

\subsection{}\label{ssCrys}
Let $\sE$ be a crystal on $Y/W_n$ for some $n \geq 1$ (\resp a crystal on $Y/W$).
Recall that:

\alphab For each divided powers thickening $U \inj T$, where $U \subset Y$ is an open
subset and $T$ a $W_n$-scheme (\resp a $W_n$-scheme for some $n$), $\sE$ defines an
$\sO_T$-module $\sE_T$, which we call the \textit{evaluation of} $\sE$ \textit{on} $T$. 

\alphab For each morphism of thickenings $(U',T') \to (U,T)$, defined by a 
$W_n$-PD-morphism $v : T' \to T$, the transition morphism 
\eq{fIsoTrans}{ v^*\sE_T \to \sE_{T'}, }
defined by the structure of $\sE$ as a sheaf of $\sO_{Y/W_n}$-modules (resp.\ 
$\sO_{Y/W}$-modules) on the crystalline site, is an isomorphism.

We will consider more specifically two families of thickenings. The first one is the 
family of thickenings $(U,U_n)$, where $U \subset Y$ is an open subset and $U_n$ is a 
smooth lifting of $U$ over $W_n$ (which always exist when $U$ is affine). When $U = Y$ 
and a smooth lifting $Y_n$ has been given, we will simply denote by $\sE_n$ the 
$\sO_{Y_n}$-module $\sE_{Y_n}$. From the crystal structure of $\sE$, $\sE_n$ inherits
an integrable connection $\nabla_n : \sE_n \to \sE_n \otimes \Omega^1_{Y_n/W_n}$, 
which is \textit{quasi-nilpotent} \cite[Def.\ 4.10]{BO}. This construction is
functorial in $\sE$, and defines an equivalence between the category of crystals on
$Y/W_n$ and the category of $\sO_{Y_n}$-modules endowed with an integrable and
quasi-nilpotent connection relative to $W_n$.

If $\sE$ is a crystal on $Y/W$ and $Y$ is lifted as a smooth formal 
scheme $\Y$ over $W$, with reduction $Y_n$ over $W_n$, $\sE$ can be viewed as a
compatible family of crystals on $Y/W_n$ for all $n$. Thus, $\sE$ defines for all $n$
an $\sO_{Y_n}$-module $\sE_n$ endowed with an integrable connection $\nabla_n$ such
that $\sE_n \simeq \sE_{n+1}/p^n\sE_{n+1}$ as a module with connection. Then the
$\sO_{\Y}$-module $\sEh = \varprojlim_n \sE_n$ has a connection $\nabla = \varprojlim_n
\nabla_n$, and one gets in this way an equivalence of categories between the category
of crystals on $Y/W$ and the category of $p$-adically separated and complete
$\sO_\Y$-modules endowed with an integrable and topologically quasi-nilpotent
connection.

The second family of thickenings we are going to use is provided by the immersions $Y
\inj W_nY := (|Y|, W_n\sO_Y)$, which are divided powers thickenings thanks to the
canonical divided powers of the ideal $VW_{n-1}\sO_Y$ (defined by $(Vx)^{[i]} =
(p^{i-1}/i!)V(x^i)$ for all $i \geq 1$). Thus, evaluating $\sE$ on $(Y,W_nY)$ defines a
$W_n\sO_Y$-module, which will be denoted by $\sE_n^W$. For $n' \leq n$, the closed
immersion $W_{n'}Y \inj W_nY$ defines a morphism of PD-thickenings of $Y$, hence the
crystal structure of $\sE$ provides a homomorphism $\sE^W_n \to \sE^W_{n'}$, the 
linear factorization of which is an isomorphism
\eq{fTransEWn}{ W_{n'}\sO_Y \otimes_{W_n\sO_Y} \sE^W_n \riso \sE^W_{n'}. }
If $\sE$ is a crystal on $Y/W$, these homomorphisms turn the family of
$W_n\sO_Y$-modules $(\sE^W_n)_{n\geq 1}$ into an inverse system of $W_n\sO_Y$-modules.

\subsection{}\label{ssFroblift}
Let $\sigma : W \to W$ be the Frobenius automorphism of $W$. We now assume that we are
given a smooth formal scheme $\Y$ lifting $Y$ over $W$, with reduction $Y_n$
over $W_n$. We assume in addition that $\Y$ is endowed with a $\sigma$-semi-linear
morphism $F : \Y \to \Y$ lifting the absolute Frobenius endomorphism of $Y$. As
$\sO_\Y$ is $p$-torsion free, the homomorphism $F : \sO_{\Y} \to \sO_{\Y}$ defines a
section $s_F : \sO_{\Y} \to W\sO_{\Y}$ of the reduction homomorphism $W\sO_{\Y} \to
\sO_{\Y}$, characterized by the fact that $w_i(s_F(x)) = F^i(x)$ for any $x \in
\sO_{\Y}$ and any ghost component $w_i$ \cite[0, 1.3]{Il79}. Composing $s_F$ with the
reduction homomorphisms and factorizing, we get for all $n \geq 1$ a homomorphism
\eq{fDeftF}{ t_F : \sO_{Y_n} \to W_n\sO_Y. } 
These homomorphisms are compatible with the reduction maps when $n$ varies, and
functorial in an obvious way with respect to the couple $(\Y,F)$.

The morphism $t_F$ is a PD-morphism, because the canonical divided powers of 
$VW_{n-1}\sO_Y$ extend the natural divided powers of $(p)$. Therefore, it defines a 
morphism of thickenings of $Y$, and, for any crystal $\sE$ on $Y/W_n$, we get 
a canonical isomorphism 
\eq{fEW}{ W_n\sO_Y \otimes_{\sO_{Y_n}} \sE_n \riso \sE^W_n, }
where the scalar extension is taken by means of the homomorphism $t_F$.

\medskip
We will need the following condition on $\sE$:

\begin{defn}\label{ssFlatCryst}
Let $Y$ be a smooth $k$-scheme.\\
\romain If $\sE$ is a crystal on $Y/W_n$, we say that $\sE$ is \textit{flat over $W_n$}
if, for any smooth lifting $U_n$ over $W_n$ of an open subset $U \subset Y$, the
evaluation $\sE_{U_n}$ of $\sE$ on $U_n$ is flat over $W_n$. \\
\romain If $\sE$ is a crystal on $Y/W$, we say that $\sE$ is \textit{flat over $W$} if,
for all $n$, the induced crystal on $Y/W_n$ is flat over $W_n$.
\end{defn}

For any open subset $U \subset Y$, two smooth liftings $U_n, U_n\!\!'$ of $U$ over
$W_n$ are locally isomorphic, and such local isomorphisms extend canonically to the
evaluations on $U_n$ and $U_n\!\!'$ of a crystal. Therefore, $\sE$ is flat
over $W_n$ if and only if there exists an open covering $U_{\alpha}$ of $Y$ and, for
all $\alpha$, a smooth lifting $U_{\alpha,n}$ of $U_{\alpha}$ over $W_n$ such that the
evaluation $\sE_{\alpha,n}$ of $\sE$ on $U_{\alpha,n}$ is flat over $W_n$.

Similarly, when $\sE$ is a crystal on $Y/W$, $\sE$ is flat over $W$ if and only
if there exists an open covering $U_{\alpha}$ of $Y$ and, for all $\alpha$, a lifting
$\U_{\alpha}$ of $U_{\alpha}$ as a smooth formal scheme over $W$ such that the
corresponding Zariski sheaf $\sEh_{\alpha} = \varprojlim_n \sE_{\alpha,n}$ on
$\U_{\alpha}$ is $p$-torsion free.

If $\sE$ is flat over $W_n$ (\resp $W$), then its 
restriction to $\Cris(Y/W_{i})$ is flat over $W_{i}$ for any $i \leq 
n$ (\resp any $i$). If $\sE$ is flat as an $\sO_{Y/W_n}$-module (\resp
as an $\sO_{Y/W}$-module), then $\sE$ is flat over $W_n$ (\resp over 
$W$).
\medskip

We now begin our study of the Tor-independence properties between the sheaves
$\WOm{n}{j}{Y}$ and the evaluation of a crystal that is flat over $W_n$. We recall
first from \cite[I, 3.1]{Il79} that, for $i \in \Z$, the $i$-th step of the canonical
filtration of $\WOm{n}{j}{Y}$ (resp.\ $\WOm{}{j}{Y}$) is the sub-$W_n\sO_Y$-module
(resp.\ sub-$W\sO_Y$-module) defined by
\eqn{ \Fil^i\WOm{n}{j}{Y} = \begin{cases}
\WOm{n}{j}{Y} & \text{if $i \leq 0$,} \\
\Ker(\WOm{n}{j}{Y} \to \WOm{i}{j}{Y}) & \text{if $1 \leq i \leq n$,} \\
0 & \text{if $i > n$}
\end{cases} }
\eqn{ \Big(\text{resp.\ } \Fil^i\WOm{}{j}{Y} = \begin{cases}
\WOm{}{j}{Y} & \text{if $i \leq 0$,} \\
\Ker(\WOm{}{j}{Y} \to \WOm{i}{j}{Y}) & \text{if $i \geq 1$}
\end{cases}\,\Big). }
We denote  
\eqna{ \gr^i\WOm{n}{j}{Y} & = & \Fil^i\WOm{n}{j}{Y}/\Fil^{i+1}\WOm{n}{j}{Y}, \\
\gr^i\WOm{}{j}{Y} & = & \Fil^i\WOm{}{j}{Y}/\Fil^{i+1}\WOm{}{j}{Y}. }
By construction, the canonical homomorphism 
\eqn{ \gr^i\WOm{}{j}{Y} \lra \gr^i\WOm{n}{j}{Y} }
is an isomorphism for $i < n$. Note that, under the assumptions of \ref{ssFroblift},
the homomorphism $t_F$ defined in \eqref{fDeftF} endows the sheaves
$\Fil^i\WOm{r}{j}{Y}$ and $\gr^i\WOm{r}{j}{Y}$ with an $\sO_{Y_n}$-module structure for
$1 \leq r \leq n$ and all $i$, $j$.
\medskip

The following lemma allows to relate the reduction modulo $p$ of $t_F$ with the usual 
Frobenius endomorphism.

\begin{lem}\label{ssRedtF}
Let $A$ be a commutative ring without $p$-torsion, $F : A \to A$ a ring homomorphism 
lifting the absolute Frobenius endomrphism of $A/pA$, and $s_F : A \to W(A)$ the ring 
homomorphism such that $w_i(s_F(x)) = F^i(x)$ for all $x \in A$ and all $i \geq 0$ 
\cite[0, 1.3]{Il79}. 
Let $n \geq 1$ be an integer, $A_n = A/p^nA$, $t_F : A_n \to W_n(A_1)$ the 
factorization of $s_F$ as in \eqref{fDeftF}, and $\ot_F : A_1 \to W_n(A_1)/pW_n(A_1)$ the 
reduction modulo $p$ of $t_F$. Denote by 
\eq{fDefoF}{ \oF : A_1 \simeq W_n(A_1)/VW_{n-1}(A_1) \to W_n(A_1)/pW_n(A_1) }
the homomorphism induced by the action of $F$ on $W_n(A_1)/pW_n(A_1)$. Then $\oF$ is 
equal to the composition 
\eq{}{ A_1 \xra{F} A_1 \xra{\ot_F} W_n(A_1)/pW_n(A_1), }
where the first homomorphism $F$ is the absolute Frobenius endomorphism of $A_1$. 
\end{lem}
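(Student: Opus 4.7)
I would lift the statement to $W(A)$, exploit the ghost-component characterization of $s_F$, and then push the resulting congruence down to $W_n(A_1)/pW_n(A_1)$. The key identity is $F \circ s_F = s_F \circ F$ as ring homomorphisms $A \to W(A)$, where on the left $F$ denotes the Witt vector Frobenius (well defined because $A$ is $p$-torsion free) and on the right $F$ is the given lift of Frobenius on $A$. Since $A$ has no $p$-torsion, $W(A)$ embeds into its ring of ghost components, so this can be verified on ghosts:
\begin{equation*}
w_i(F(s_F(x))) = w_{i+1}(s_F(x)) = F^{i+1}(x) = w_i(s_F(F(x))).
\end{equation*}

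Next, fix $x \in A_1$ and a lift $\tilde{x} \in A$. Since $F$ lifts Frobenius on $A_1$, we have $F(\tilde{x}) = \tilde{x}^p + pz$ for some $z \in A$; applying the ring homomorphism $s_F$ gives $F(s_F(\tilde{x})) = s_F(F(\tilde{x})) = s_F(\tilde{x}^p) + p\, s_F(z)$, hence $F(s_F(\tilde{x})) \equiv s_F(\tilde{x}^p) \pmod{p W(A)}$. Pushing this congruence through the reduction $W(A) \to W_n(A_1)$, and using that the Witt Frobenius is compatible with this reduction (on $W_n(A_1)$ it is just componentwise $p$-th power, so preserves the truncation level), one obtains $F(t_F(x)) \equiv t_F(x^p) \pmod{pW_n(A_1)}$. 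Reducing the right-hand side modulo $p$ identifies it with $\ot_F(F(x))$ in $W_n(A_1)/pW_n(A_1)$.

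It remains to recognise $F(t_F(\tilde{x})) \bmod p$ as $\oF(x)$. By construction, $\oF$ is induced by the Witt Frobenius on $W_n(A_1)$ via the surjection $W_n(A_1) \surj W_n(A_1)/VW_{n-1}(A_1) \simeq A_1$, the factorization being legitimate because $FV = p$. Thus $\oF(x) = F(\tilde{y}) \bmod pW_n(A_1)$ for any lift $\tilde{y} \in W_n(A_1)$ of $x$, and $t_F(\tilde{x})$ is such a lift, since its zeroth Witt component is $w_0(t_F(\tilde{x})) = \tilde{x} \bmod p = x$. Combined with the previous step, this yields $\oF(x) = \ot_F(F(x))$.

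The only substantive ingredient is the ghost-level identity $F \circ s_F = s_F \circ F$, which is forced by the defining condition $w_i \circ s_F = F^i$; everything else is bookkeeping. The main difficulty in writing the proof cleanly is keeping carefully distinguished the two Frobenii (on $A$ and on Witt vectors), the two quotients (by $VW_{n-1}$ and by $pW_n$), and the two kinds of truncations (mod $p^n$ on $A$ and from $W$ to $W_n$ on the Witt side).
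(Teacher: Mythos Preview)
Your proof is correct and follows essentially the same approach as the paper: both hinge on the identity $F \circ s_F = s_F \circ F$ (Witt Frobenius on the left, the given lift on the right), checked via ghost components, followed by a reduction to $W_n(A_1)/pW_n(A_1)$. The paper packages the remaining bookkeeping into a single commutative diagram, whereas you unwind it through explicit congruences, but the mathematical content is the same.
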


\begin{proof}
It suffices to prove the equality of the morphisms obtained by composing $\oF$ and 
$\ot_F \circ F$ with the surjection $A \surj A_1$. 
By construction, there is a commutative diagram 
\eqn{ \xymatrix{
& A \ar[r]^-{F} \ar[d]^-{s_F} \ar[ld]_-{\Id} & A \ar@{>>}[r] \ar[d]^-{s_F} & A_1
\ar[d]^-{\ot_F} \\
A \ar@{>>}[dr] & W(A) \ar@{>>}[l] \ar[r]^-{F} \ar@{>>}[d] & W(A) \ar@{>>}[r]
\ar@{>>}[d] & W_n(A_1)/pW_n(A_1) \\
& A_1 \ar[r]^-{\oF} & W_n(A_1)/pW_n(A_1) \ar@{=}[ur] & 
} }
in which the unlabelled arrows are the canonical surjections. Note that the commutativity of 
the left upper square (in which the upper $F$ is the given homomorphism and the lower
one the canonical Frobenius endomorphism of $W(A)$) is a consequence of the definition
of $s_F$ \cite[0, 1.3]{Il79}. The lemma follows from the equality of the exterior
paths of the diagram.
\end{proof}

\begin{lem}\label{ssTorind1}
Let $Y$ be a smooth $k$-scheme, such that there exists a smooth formal scheme $\Y$
over $W$ lifting $Y$ and a semi-linear endomorphism $F$ of $\Y$ lifting the absolute
Frobenius endomorphism of $Y$. For some $n \geq 1$, let $\sE$ be a crystal on $Y/W_n$,
flat over $W_n$. Let $Y_n$ be the reduction of $\Y$ over $W_n$ and $\sE_n$ the
evaluation of $\sE$ on $Y_n$. Then, for the $\sO_{Y_n}$-module structure defined on
$W_n\sO_Y$-modules by $t_F$,
\eq{fTorind1}{ \tor_q^{\sO_{Y_n}}(\sE_n,\Fil^i\WOm{r}{j}{Y}) = 
\tor_q^{\sO_{Y_n}}(\sE_n,\gr^i\WOm{r}{j}{Y}) = 0 }
for $q \geq 1$, $1 \leq r \leq n$, and all $i$, $j$.
\end{lem}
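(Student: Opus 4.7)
The plan is to reduce the assertion through three successive dévissages, culminating in an application of Illusie's local freeness results for the sheaves $\WOm{r}{j}{Y}$.

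\textbf{First dévissage: from $\sE_n$ to $\sE_1$.} Using the $W_n$-flatness of $\sE$, the $p$-adic filtration $\sE_n \supset p\sE_n \supset \cdots \supset p^{n-1}\sE_n \supset 0$ has graded pieces $p^i\sE_n/p^{i+1}\sE_n$ all canonically isomorphic to the $\sO_Y$-module $\sE_1$ obtained by evaluating on $Y_1 = Y$ the reduction of $\sE$ modulo $p$; this identification follows from the crystal transition isomorphism \eqref{fIsoTrans} for $(Y,Y_1) \inj (Y,Y_n)$ combined with \eqref{fEW}. Applying the long exact sequence of $\tor^{\sO_{Y_n}}(-, M)$ to each step of the filtration, with $M = \Fil^i\WOm{r}{j}{Y}$ or $\gr^i\WOm{r}{j}{Y}$, reduces the claim to the vanishing of $\tor^{\sO_{Y_n}}_q(\sE_1, M)$ for $q \geq 1$.

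\textbf{Second dévissage: to $p$-curvature zero.} The $\sO_Y$-module $\sE_1$ carries an integrable quasi-nilpotent connection inherited from the crystal structure on $Y/k$, which forces its $p$-curvature $\psi$ to be nilpotent. Filtering $\sE_1$ by horizontal sub-$\sO_Y$-modules whose graded pieces have $p$-curvature zero (for instance through iterates of $\psi$) and again applying the long exact Tor sequence reduces the problem to the case $\psi = 0$.

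\textbf{Cartier descent and Illusie.} Under this hypothesis, Cartier descent provides an isomorphism $\sE_1 \simeq F_Y^* \sF$ for some $\sO_Y$-module $\sF$, where $F_Y$ denotes the absolute Frobenius of $Y$. Lemma \ref{ssRedtF} gives the key identity $\oF = \ot_F \circ F_Y$ in $W_n\sO_Y/pW_n\sO_Y$, from which one derives a canonical isomorphism
$$
\tor^{\sO_{Y_n}}_q(\sE_1, M) \simeq \tor^{\sO_Y}_q(\sF, M_{\oF}),
$$
where $M_{\oF}$ denotes $M$ equipped with the $\sO_Y$-structure coming from the Witt Frobenius $\oF$. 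Illusie's results in \cite[I]{Il79} then show that $\gr^i\WOm{r}{j}{Y}$ is locally free as an $\sO_Y$-module via $\oF$; by the finite filtration whose graded pieces are these $\gr^i$, the same flatness holds for $\Fil^i\WOm{r}{j}{Y}$, and the Tors vanish for $q \geq 1$.

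The main obstacle is the bookkeeping in the third step, where the $\sO_{Y_n}$-action on $M$ defined by $t_F$ must be converted into the $\sO_Y$-structure via $\oF$ in order to access Illusie's local freeness; Lemma \ref{ssRedtF} is designed precisely for this translation. The first two dévissages are then formal once the correct filtrations are in place, and the final step is a citation.
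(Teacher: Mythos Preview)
Your first d\'evissage leads to a dead end: the intermediate statement you reduce to, namely $\tor^{\sO_{Y_n}}_q(\sE_1, M) = 0$ for $q \geq 1$, is \emph{false}. Take $Y = \Spec k$, $n = 2$, $\sE = \sO_{Y/W_2}$ (so $\sE_1 = k$), and $M = \gr^0 W_2\sO_Y = k$. Then $\tor^{W_2}_q(k,k) = k$ for every $q \geq 0$, by the periodic free resolution $\cdots \xra{p} W_2 \xra{p} W_2 \to k$. The error surfaces in your third step, where you assert
\[
\tor^{\sO_{Y_n}}_q(\sE_1, M) \simeq \tor^{\sO_Y}_q(\sF, M_{\oF}).
\]
This change of base ring from $\sO_{Y_n}$ to $\sO_Y$ is unjustified: $\sO_Y$ is not flat over $\sO_{Y_n}$, so the transitivity formula $\sE_1 \otimesL_{\sO_{Y_n}} M \simeq \sE_1 \otimesL_{\sO_Y}(\sO_Y \otimesL_{\sO_{Y_n}} M)$ produces a complex $\sO_Y \otimesL_{\sO_{Y_n}} M$ with nonvanishing cohomology in infinitely many degrees.

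The paper avoids this trap by \emph{not} filtering $\sE_n$. Instead it first reduces to the $\gr^i$ case (the canonical filtration being finite), and then uses that $\gr^i\WOm{n}{j}{Y}$ is annihilated by $p$, so its $\sO_{Y_n}$-structure factors through $\sO_Y$. Transitivity of the derived tensor product then gives
\[
\sE_n \otimesL_{\sO_{Y_n}} \gr^i\WOm{n}{j}{Y} \;\simeq\; (\sE_n \otimesL_{\sO_{Y_n}} \sO_Y) \otimesL_{\sO_Y} \gr^i\WOm{n}{j}{Y} \;\simeq\; \sE_1 \otimesL_{\sO_Y} \gr^i\WOm{n}{j}{Y},
\]
the last isomorphism holding because $\sE_n \otimesL_{\sO_{Y_n}} \sO_Y \simeq \sE_n \otimesL_{W_n} k \simeq \sE_1$ is concentrated in degree $0$ by $W_n$-flatness of $\sE_n$. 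From this point your steps 2 and 3 (the $p$-curvature filtration, Cartier descent, and Illusie's local freeness via $\oF$) apply verbatim---but over $\sO_Y$, not over $\sO_{Y_n}$.
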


In particular, when $W_n\sO_Y$ is viewed as an $\sO_{Y_n}$-algebra through
$t_F$, $\sE_n$ and $W_n\sO_Y$ are Tor-independent over $\sO_{Y_n}$.

\begin{proof}
The canonical filtration of $\WOm{r}{i}{Y}$ is discrete and codiscrete, so it suffices
to prove the vanishing of $\tor_q^{\sO_{Y_n}}(\sE_n,\gr^i\WOm{r}{j}{Y})$. Moreover, we
may assume that $r = n$, since $\gr^i\WOm{r}{j}{Y} = 0$ for $i < 0$ or $i \geq r$, and
the natural map $\gr^i\WOm{n}{j}{Y} \to \gr^i\WOm{r}{j}{Y}$ is an isomorphism for $0 \leq i
< r \leq n$.

As $\gr^i\WOm{n}{j}{Y}$ is annihilated by $p$, its $\sO_{Y_n}$-module
structure is also given by the composition 
$\sO_{Y_n}\to\sO_Y \xrightarrow{\overline{t}_F} W_n\sO_Y/pW_n\sO_Y$,
where $\overline{t}_F$ is the reduction of $t_F$ mod $p$. Therefore, 
we obtain isomorphisms
$$ \sE_n \otimesL_{\sO_{Y_n}} \gr^i\WOm{n}{j}{Y}\ \simeq\ 
(\sE_n \otimesL_{\sO_{Y_n}} \sO_Y) \otimesL_{\sO_Y} \gr^i\WOm{n}{j}{Y}.$$ 
Because $\sE_n$ and $\sO_{Y_n}$ are flat relatively to $W_n$, we have
$$ \sE_n \otimesL_{\sO_{Y_n}} \sO_Y\ \simeq\ \sE_n 
\otimesL_{\sO_{Y_n}} (\sO_{Y_n} \otimesL_{W_n} k)\ \simeq\ 
\sE_n \otimesL_{W_n} k
\ \xrightarrow{\ \sim\ \,}\ \sE_1,$$
so it suffices to prove that
$$ \tor_q^{\sO_Y}(\sE_1,\gr^i\WOm{n}{j}{Y}) = 0 $$
for $q \geq 1$ and all $i$, $j$.

Since $\sE_1$ is the evaluation on $Y$ of a crystal, its connection $\nabla_1$
is quasi-nilpotent \cite[Def.\ 4.10]{BO}. Let $\sT_Y = \Omega_Y^{1\,\vee}$ 
be the sheaf of $k$-derivations on $Y$ and $\psi :
\sT_Y \to \End_{\sO_Y}(\sE_1)$ the $p$-curvature of
$\nabla_1$ \cite[5.0]{Ka70}. We define an increasing 
filtration of $\sE_1$ by horizontal submodules $\sE_1^m$, $m \geq 0$, by setting, 
for any affine open subset $U \subset Y$, 
\ml{}{\Gamma(U, \sE_1^m) = \\
\{x \in \Gamma(U, \sE)\ |\ \forall 
D_1,\ldots,D_m \in \Gamma(U, \sT_Y), \psi(D_1)\cdots\psi(D_m)(x)=0\}}
\cite[5.5]{Ka70}. As $\nabla_1$ is quasi-nilpotent, this
filtration is exhaustive, and it suffices to prove that, for $q \geq 1$, $m \geq 0$, 
and all $i$, $j$,
$$ \tor_q^{\sO_Y}(\sE_1^m,\gr^i\WOm{n}{j}{Y}) = 0. $$
By construction, $\sE_1^0 = 0$ and, for $m \geq 1$, each quotient
$\sE_1^m/\sE_1^{m-1}$ has $p$-curvature $0$. Therefore, for all $m$, there is an
$\sO_Y$-module $\sF^m$ such that $\sE_1^m/\sE_1^{m-1} \simeq F^{\ast}\sF^m$. Since $F$
is flat, it is enough to prove that, for all $q \geq 1$, $m \geq 0$, and all $i$, $j$,
$$ \tor_q^{\sO_Y}(\sF^m,\gr^i\WOm{n}{j}{Y}) = 0, $$
$\gr^i\WOm{n}{j}{Y}$ being now viewed as an $\sO_Y$-module thanks to 
the composition 
$$ \sO_Y \xra{\ F\ } \sO_Y \xra{\ \overline{t}_F\ } 
W_n\sO_Y/pW_n\sO_Y.$$ 
By Lemma \ref{ssRedtF}, this homomorphism is the factorization 
\eqn{ \oF : \sO_Y \lra W_n\sO_Y/pW_n\sO_Y  }
of the Frobenius action on $W_n\sO_Y/pW_n\sO_Y$. As $\gr^i\WOm{n}{j}{Y}$ is a locally
free $\sO_Y$-module for this structure \cite[I, 3.9]{Il79}, the lemma follows.
\end{proof}

\begin{thm}\label{ssTorind2}
Let $Y$ be a smooth $k$-scheme. For some $n \geq 1$, let $\sE$ be a crystal on $Y/W_n$,
flat over $W_n$, and let $\sE^W_n$ be the evaluation of $\sE$ on the PD-thickening
$(Y,W_nY)$. Then
\eq{fTorind2}{ \tor_q^{W_n\sO_Y}(\sE^W_n,\Fil^i\WOm{r}{j}{Y}) = 
\tor_q^{W_n\sO_Y}(\sE^W_n,\gr^i\WOm{r}{j}{Y}) = 0 }
for $q \geq 1$, $1 \leq r \leq n$, and all $i$, $j$.
\end{thm}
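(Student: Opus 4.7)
The plan is to reduce the theorem to Lemma \ref{ssTorind1} by Zariski-localizing on $Y$ to the setting where a smooth formal lift together with a Frobenius lift exists. The difficulty is that the theorem is stated over $W_n\sO_Y$ with the natural module structures, whereas Lemma \ref{ssTorind1} works over $\sO_{Y_n}$ via the PD-morphism $t_F$. The bridge will be the base change isomorphism \eqref{fEW}.

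First, since $\tor$ is local and since the canonical filtration on $\WOm{r}{j}{Y}$ is discrete and codiscrete, it is enough to prove the vanishing of $\tor_q^{W_n\sO_Y}(\sE^W_n, \gr^i\WOm{r}{j}{Y})$ locally on $Y$. Around any point of $Y$ there is an affine open $U$ that is étale over an affine space over $k$; lifting the étale structure to $W$ and using the canonical Frobenius on affine space, one obtains a smooth formal lift $\mathcal{U}$ of $U$ over $W$ together with a Frobenius lift $F : \mathcal{U} \to \mathcal{U}$. Hence we may assume throughout the proof that $Y = U$ itself admits such a pair $(\Y, F)$, and the hypotheses of Lemma \ref{ssTorind1} are satisfied.

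Next, I would use the isomorphism \eqref{fEW}, which reads $\sE^W_n \simeq W_n\sO_Y \otimes_{\sO_{Y_n}} \sE_n$, where $W_n\sO_Y$ is regarded as an $\sO_{Y_n}$-algebra via $t_F$. Applying Lemma \ref{ssTorind1} to the $\gr^i\WOm{r}{j}{Y}$ with the special choice of second argument $\gr^0\WOm{n}{0}{Y}$-type quotients--or more directly, to the full complex $\WOm{r}{j}{Y}$ by a dévissage along the canonical filtration--gives in particular that $\sE_n$ and $W_n\sO_Y$ are Tor-independent over $\sO_{Y_n}$. Consequently, \eqref{fEW} upgrades to a derived base-change isomorphism
\begin{equation*}
\sE^W_n \simeq W_n\sO_Y \otimesL_{\sO_{Y_n}} \sE_n
\end{equation*}
in the derived category of $W_n\sO_Y$-modules.

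Combining this with associativity of the derived tensor product yields, for any $W_n\sO_Y$-module $\sM$,
\begin{equation*}
\sE^W_n \otimesL_{W_n\sO_Y} \sM \ \simeq\ \bigl(W_n\sO_Y \otimesL_{\sO_{Y_n}} \sE_n\bigr) \otimesL_{W_n\sO_Y} \sM \ \simeq\ \sE_n \otimesL_{\sO_{Y_n}} \sM.
\end{equation*}
Taking $\sM = \Fil^i\WOm{r}{j}{Y}$ or $\sM = \gr^i\WOm{r}{j}{Y}$, endowed with its $\sO_{Y_n}$-structure via $t_F$, Lemma \ref{ssTorind1} guarantees that the right-hand side is concentrated in degree $0$, and therefore so is the left-hand side, which is exactly the vanishing \eqref{fTorind2}. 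The main obstacle is really the base-change step: one needs that $W_n\sO_Y \otimesL_{\sO_{Y_n}} \sE_n$ has no higher Tor, which is itself an instance of Lemma \ref{ssTorind1} and is the reason the proof runs smoothly; the globalization step is routine once a local Frobenius lift is secured.
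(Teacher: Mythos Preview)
Your proof is correct and follows essentially the same route as the paper: localize to obtain a smooth formal lift with Frobenius, use \eqref{fEW} together with the Tor-independence of $\sE_n$ and $W_n\sO_Y$ over $\sO_{Y_n}$ (a special case of Lemma \ref{ssTorind1}, namely $\Fil^0\WOm{n}{0}{Y}=W_n\sO_Y$) to upgrade \eqref{fEW} to a derived isomorphism, and then conclude by transitivity of the derived tensor product and another application of Lemma \ref{ssTorind1}. The only cosmetic point is that your phrasing ``$\gr^0\WOm{n}{0}{Y}$-type quotients'' is slightly off (that graded piece is $\sO_Y$, not $W_n\sO_Y$); the clean instance of Lemma \ref{ssTorind1} you want is the $\Fil^0$ case with $i=j=0$, $r=n$.
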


\begin{proof}
The statement is local on $Y$, hence we may assume that $Y$ is affine and has a 
smooth formal lifting $\Y$ over $W$ endowed with a lifting $F$ of the absolute 
Frobenius endomorphism of $Y$. Let $Y_n$ be the reduction of $\Y$ on $W_n$ and 
$\sE_n$ the evaluation of $\sE$ on $Y_n$. By \eqref{fEW}, $t_F$ provides an 
isomorphism 
\eqn{ \sE_n \otimes_{\sO_{Y_n}} W_n\sO_Y \riso \sE^W_n. }
Applying \eqref{fTorind1} for $i = j = 0$ and $r=n$, we obtain in $D^-(W_n\sO_Y)$ an
isomorphism
\eq{fDerEWn}{ \sE_n \otimesL_{\sO_{Y_n}} W_n\sO_Y \riso \sE^W_n. }
On the other hand, the transitivity isomorphism for the derived extension of scalars
yields
\eqna{ \sE_n  \otimesL_{\sO_{Y_n}} \Fil^i\WOm{r}{j}{Y}& \riso & 
(\sE_n \otimesL_{\sO_{Y_n}} W_n\sO_Y) \otimesL_{W_n\sO_Y} \Fil^i\WOm{r}{j}{Y} \\
& \riso & \sE^W_n \otimesL_{W_n\sO_Y} \Fil^i\WOm{r}{j}{Y}. }
By \eqref{fTorind1}, the left hand side is acyclic in degrees $\neq 0$. The first 
vanishing of \eqref{fTorind2} follows, and the second one is obtained by the same 
argument. 
\end{proof}
\medskip

We define the \textit{canonical filtration} of $\sE_n^W \otimes_{W_n\sO_Y}
\WOm{n}{j}{Y}$ by 
\ml{fDefFilCanE}{ \Fil^i(\sE_n^W \otimes_{W_n\sO_Y} \WOm{n}{j}{Y}) = \\
\begin{cases}
\sE_n^W \otimes_{W_n\sO_Y} \WOm{n}{j}{Y} & \text{if $i \leq 0$,} \\
\Ker(\sE_n^W \otimes_{W_n\sO_Y} \WOm{n}{j}{Y} \to \sE_i^W \otimes_{W_i\sO_Y}
\WOm{i}{j}{Y}) & \text{if $1 \leq i \leq n$,} \\
0 & \text{if $i > n$}
\end{cases} }
Using \eqref{fTransEWn}, the theorem implies:

\begin{cor}\label{ssFilCanEnW}
Under the assumptions of the theorem, the natural map 
\eq{fFilCanEnW}{ \sE_n^W \otimes_{W_n\sO_Y} \Fil^i\WOm{n}{j}{Y} \lra 
\Fil^i(\sE_n^W \otimes_{W_n\sO_Y} \WOm{n}{j}{Y})  }
is an isomorphism for all $i$. 
\end{cor}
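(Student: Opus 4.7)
The plan is to derive the corollary from Theorem \ref{ssTorind2} by tensoring the defining short exact sequence of the canonical filtration with $\sE_n^W$. First I would dispose of the trivial cases: for $i \leq 0$ both sides coincide with $\sE_n^W \otimes_{W_n\sO_Y} \WOm{n}{j}{Y}$ and the map \eqref{fFilCanEnW} is the identity, while for $i > n$ both sides vanish. So I may assume $1 \leq i \leq n$.

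For such $i$, I start from the tautological short exact sequence of $W_n\sO_Y$-modules
\[ 0 \to \Fil^i\WOm{n}{j}{Y} \to \WOm{n}{j}{Y} \to \WOm{i}{j}{Y} \to 0. \]
Theorem \ref{ssTorind2} gives $\tor_1^{W_n\sO_Y}(\sE_n^W, \Fil^i\WOm{n}{j}{Y}) = 0$, so applying $\sE_n^W \otimes_{W_n\sO_Y} -$ preserves exactness and yields
\[ 0 \to \sE_n^W \otimes_{W_n\sO_Y} \Fil^i\WOm{n}{j}{Y} \to \sE_n^W \otimes_{W_n\sO_Y} \WOm{n}{j}{Y} \to \sE_n^W \otimes_{W_n\sO_Y} \WOm{i}{j}{Y} \to 0. \]

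Next I identify the right-hand term. Since $\WOm{i}{j}{Y}$ is a $W_i\sO_Y$-module and the transition isomorphism \eqref{fTransEWn} provides $\sE_n^W \otimes_{W_n\sO_Y} W_i\sO_Y \riso \sE_i^W$, transitivity of extension of scalars furnishes a canonical isomorphism $\sE_n^W \otimes_{W_n\sO_Y} \WOm{i}{j}{Y} \simeq \sE_i^W \otimes_{W_i\sO_Y} \WOm{i}{j}{Y}$. By naturality, the resulting second map in the short exact sequence above is precisely the map $\sE_n^W \otimes_{W_n\sO_Y} \WOm{n}{j}{Y} \to \sE_i^W \otimes_{W_i\sO_Y} \WOm{i}{j}{Y}$ used in the definition \eqref{fDefFilCanE}, so its kernel is, by definition, $\Fil^i(\sE_n^W \otimes_{W_n\sO_Y} \WOm{n}{j}{Y})$. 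Exactness of the sequence then identifies this kernel with $\sE_n^W \otimes_{W_n\sO_Y} \Fil^i\WOm{n}{j}{Y}$ via the natural map \eqref{fFilCanEnW}, which is the claim.

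There is no substantial obstacle: the entire weight of the argument has already been shouldered by the Tor-vanishing of Theorem \ref{ssTorind2}, complemented by the crystal-theoretic identification \eqref{fTransEWn}; both are at our disposal.
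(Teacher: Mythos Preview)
Your argument is the one the paper has in mind: it only records ``Using \eqref{fTransEWn}, the theorem implies'' before stating the corollary, and you have spelled this out correctly. One small slip: to deduce left-exactness of the tensored sequence you need $\tor_1^{W_n\sO_Y}(\sE_n^W,\WOm{i}{j}{Y})=0$ (the Tor of the \emph{quotient}), not of $\Fil^i\WOm{n}{j}{Y}$; this is still supplied by Theorem~\ref{ssTorind2} (take $r=i$ and filtration index $0$, so $\Fil^0\WOm{i}{j}{Y}=\WOm{i}{j}{Y}$), so the proof stands once you adjust the citation.
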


\subsection{}\label{ssupinj}
Let $Y$ be a smooth $k$-scheme, and assume that $n \geq 2$. If $\sE$ is a crystal on
$Y/W_n$, multiplication by $p$ on $\sE^W_n \otimes \WOm{n}{j}{Y}$ vanishes on the image
of $\sE^W_n\otimes\Fil^{n-1}\WOm{n}{j}{Y}$. Factorizing and taking the isomorphism
\eqref{fTransEWn} into account, one gets a canonical homomorphism
\eq{fDefup}{ \up : \sE^W_{n-1} \otimes_{W_{n-1}\sO_Y} \WOm{n-1}{j}{Y} \to 
\sE^W_n \otimes_{W_n\sO_Y} \WOm{n}{j}{Y}. }

We recall that, when $\sE = \sO_{Y/W_n}$, the morphism $\up$ is an injection 
\eqn{ \up : \WOm{n-1}{j}{Y} \inj \WOm{n}{j}{Y}  }
for all $j$ \cite[Prop.\ 3.4]{Il79}. It follows that, for 
$i \geq 1$, there is an exact sequence
\eq{fpseq}{ 0 \to \WOm{n-1}{j}{Y}/p^{i-1}\WOm{n-1}{j}{Y} \xra{\up} 
\WOm{n}{j}{Y}/p^i\WOm{n}{j}{Y} \to \WOm{n}{j}{Y}/p\WOm{n}{j}{Y} \to 0. }

We will show later that, when $\sE$ is flat over $W_n$, the homomorphism \eqref{fDefup}
is still injective (see Corollary \ref{ssupinj2}). This is a consequence of another
Tor-independence property, which we prove next.

\begin{lem}\label{ssLoclib}
Let $Y$ be a smooth $k$-scheme. For $n \geq 1$ and all $j$, the sheaf
$\WOm{n}{j}{Y}/p\WOm{n}{j}{Y}$ is a locally free $\sO_Y$-module of finite rank for the
structure defined by the homomorphism $\oF : \sO_Y \to W_n\sO_Y/pW_n\sO_Y$
\eqref{fDefoF}.
\end{lem}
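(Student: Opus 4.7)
The statement is local on $Y$, so I would assume $Y$ admits a smooth formal lift $\Y$ over $W$ together with a lift $F:\Y\to\Y$ of the absolute Frobenius; by Lemma \ref{ssRedtF}, the map $\oF$ then factors as $\sO_Y\xra{F}\sO_Y\xra{\ot_F}W_n\sO_Y/pW_n\sO_Y$. The crucial input is Illusie's structural result \cite[I, 3.9]{Il79}: for the canonical filtration on $\WOm{n}{j}{Y}$, each graded piece $\gr^i\WOm{n}{j}{Y}$ is a locally free $\sO_Y$-module of finite rank via $\oF$, and in particular is annihilated by $p$. Consequently $p\Fil^i\subset\Fil^{i+1}$, and multiplication by $p$ defines $\sO_Y$-linear (via $\oF$) boundary morphisms $p:\gr^{i-1}\WOm{n}{j}{Y}\to\gr^i\WOm{n}{j}{Y}$.

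I would then reduce modulo $p$ as follows. The canonical filtration induces a finite filtration $\overline{\Fil}^i$ on $\overline M:=\WOm{n}{j}{Y}/p\WOm{n}{j}{Y}$ by the images of $\Fil^i\WOm{n}{j}{Y}$. A first step is the \emph{strictness} identity
\[
p\WOm{n}{j}{Y}\cap\Fil^{i+1}\WOm{n}{j}{Y}=p\Fil^i\WOm{n}{j}{Y},
\]
verified by a direct computation on $\Fil^i=V^i\WOm{n-i}{j}{Y}+dV^i\WOm{n-i}{j-1}{Y}$ using the de Rham-Witt relation $p=FV=VF$ (for $j=0$ this amounts to the elementary observation that an element of $pW_n\sO_Y$ whose first $i+1$ Witt components vanish already comes from $p\cdot V^iW_{n-i}\sO_Y$). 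Granted strictness, the graded pieces of the induced filtration identify with the cokernels $\overline{\gr}^i=\gr^i\WOm{n}{j}{Y}/\Im\bigl(p:\gr^{i-1}\WOm{n}{j}{Y}\to\gr^i\WOm{n}{j}{Y}\bigr)$.

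The technical heart is then to show that each $\overline{\gr}^i$ is a locally free $\sO_Y$-module via $\oF$. Using the explicit decomposition of $\gr^i\WOm{n}{j}{Y}$ from \cite[I, 3.9]{Il79} and the formula $p\cdot V^{i-1}(c)=V^i(Fc')$ (where $c'$ denotes the appropriate truncation), the boundary map identifies, after trivialization of the graded pieces, with a map of Frobenius type $c\mapsto c^p$ on each summand. Its image is locally the subsheaf of $p$-th powers, which is a local direct summand, and the cokernels involve sheaves such as $\sO_Y/F\sO_Y$ viewed as $\sO_Y$-module via a higher power of Frobenius. By smoothness of $Y$ over the perfect field $k$, the Frobenius $F$ is finite and flat, hence such cokernels are locally free. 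The main obstacle I expect is tracking these cokernels cleanly through the decomposition of $\gr^i$ into its $V^i$-part and its $dV^i$-part simultaneously for all $j$.

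Finally, since $\overline M$ admits a finite filtration whose graded pieces are locally free (hence flat) as $\sO_Y$-modules via $\oF$, iterated extensions of flat modules give that $\overline M$ itself is flat; being finitely generated of finite rank, it is locally free of finite rank over $\sO_Y$ via $\oF$, as claimed.
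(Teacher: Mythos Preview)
Your strategy is correct and is essentially the paper's argument unwound: the paper proceeds by induction on $n$ via the exact sequence
\[
0\lra\gr^n\WOm{n+1}{j}{Y}/\up\,\gr^{n-1}\WOm{n}{j}{Y}\lra\WOm{n+1}{j}{Y}/p\lra\WOm{n}{j}{Y}/p\lra 0,
\]
whereas you filter $\WOm{n}{j}{Y}/p$ directly by the images of the canonical filtration. Both reductions lead to the same task, namely showing that the cokernels $\overline{\gr}^i=\mathrm{coker}(p:\gr^{i-1}\to\gr^i)$ are locally free via $\oF$. Your strictness identity $p\WOm{n}{j}{Y}\cap\Fil^{i+1}=p\Fil^i$ is correct; rather than a direct $V^i,dV^i$ computation, it follows cleanly from the injectivity of $\up:\WOm{i}{j}{Y}\inj\WOm{i+1}{j}{Y}$ \cite[I, Prop.~3.4]{Il79} together with the surjectivity of the restriction $\Fil^i\WOm{n}{j}{Y}\to\Fil^i\WOm{n-1}{j}{Y}$.

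The gap is exactly at the step you flag. Your description of the boundary as a map ``of Frobenius type $c\mapsto c^p$'' with cokernel ``$\sO_Y/F\sO_Y$'' is accurate only for $j=0$ (where indeed $\gr^i\simeq\sO_Y$ and the cokernel is $\sO_Y/\sO_Y^p\simeq B\Omega^1_Y$). For general $j$ the map is not a coordinate Frobenius but the inverse Cartier operator. The precise input, which the paper invokes as \cite[I, (3.10.4)]{Il79}, is that under the exact sequences
\[
0\lra\Omega^j_Y/B_i\Omega^j_Y\xra{\ V^i\ }\gr^i\WOm{i+1}{j}{Y}\lra\Omega^{j-1}_Y/Z_i\Omega^{j-1}_Y\lra 0
\]
(for $i-1$ and $i$), the map $\up$ corresponds to $C^{-1}$ on both outer terms. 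The cokernel sequence then reads
\[
0\lra B\Omega^{j+1}_Y\lra\overline{\gr}^i\lra B\Omega^j_Y\lra 0,
\]
with $\sO_Y$ acting on the outer terms via $F^{i+1}$; these are locally free of finite rank by \cite[0, Prop.~2.2.8]{Il79}. Note that finite flatness of Frobenius alone does not give this: the local freeness of $B_i\Omega^j_Y$ and $Z_i\Omega^j_Y$ for the twisted $\sO_Y$-structure is a separate structural fact about the de Rham complex in characteristic $p$. So what your sketch is missing is this Cartier-operator identification, and the correct reference for local freeness of the resulting pieces.
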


\begin{proof}
We proceed by induction on $n$, the claim being clear for $n = 1$. We set 
$\WOm{n}{j}{Y} = 0$ for $n = 0$. For $n \geq 1$, the
commutative diagram with exact rows
\eqn{ \xymatrix{
0 \ar[r] & \gr^{n-1}\WOm{n}{j}{Y} \ar[r] \ar@{^{(}->}[d]_-{\up} & 
\WOm{n}{j}{Y} \ar[r] \ar@{^{(}->}[d]_-{\up} & 
\WOm{n-1}{j}{Y} \ar[r] \ar@{^{(}->}[d]_-{\up} & 0 \\
0 \ar[r] & \gr^n\WOm{n+1}{j}{Y} \ar[r] & \WOm{n+1}{j}{Y} \ar[r] & 
\WOm{n}{j}{Y} \ar[r] & 0
} }
yields an exact sequence 
\eqn{ \xymatrix{ 
0 \ar[r] & \gr^n\WOm{n+1}{j}{Y}/\up\,\gr^{n-1}\WOm{n}{j}{Y} \ar[d] & \\
& \WOm{n+1}{j}{Y}/\up\WOm{n}{j}{Y} \ar[r] & \WOm{n}{j}{Y}/\up\WOm{n-1}{j}{Y} \ar[r] & 0
 } }
in which the morphisms are $\sO_Y$-linear for the module structure defined by $\oF$. As
$\up\WOm{n-1}{j}{Y} = p\WOm{n}{j}{Y}$ for all $n \geq 1$, the induction hypothesis
reduces to proving that $\gr^n\WOm{n+1}{j}{Y}/\up\,\gr^{n-1}\WOm{n}{j}{Y}$ is a locally
free finitely generated $\sO_Y$-module for the structure defined by $\oF$. By \cite[I,
(3.10.4)]{Il79}, there is a commutative diagram
\eqn{ \xymatrix{
0 \ar[r] & \Omega^j_Y/B_{n-1}\Omega^j_Y \ar[r]^-{V^{n-1}} \ar@{^{(}->}[d]_-{C^{-1}} & 
\gr^{n-1}\WOm{n}{j}{Y} \ar[r] \ar@{^{(}->}[d]_-{\up} & 
\Omega^{j-1}_Y/Z_{n-1}\Omega^{j-1}_Y \ar[r] \ar@{^{(}->}[d]_-{C^{-1}} & 0 \\
0 \ar[r] & \Omega^j_Y/B_n\Omega^j_Y \ar[r]^-{V^n} \ar[r]^-{V^r} \ar[r] & 
\gr^n\WOm{n+1}{j}{Y} \ar[r] & \Omega^{j-1}_Y/Z_n\Omega^{j-1}_Y \ar[r] & 0
} }
in which the exterior columns are defined by the inverse Cartier operator $C^{-1}$ and
we set $B_0\Omega^j_Y = 0$, $Z_0\Omega^j_Y = \Omega^j_Y$. All maps in this diagram
become $\sO_Y$-linear maps if we endow the terms of the middle column with the
structure defined by $\oF$, the exterior terms of the upper row with the structure
defined by $F^n$, and the exterior terms of the lower row with the structure defined by
$F^{n+1}$ (see \cite[I, Cor.\ 3.9]{Il79}). Therefore, the corresponding cokernel
sequence, which can be written as
\eqn{ 0 \to B\Omega^{j+1}_Y \to
\gr^n\WOm{n+1}{j}{Y}/\up\,\gr^{n-1}\WOm{n}{j}{Y} \to B\Omega^j_Y \to 0,}
is an $\sO_Y$-linear exact sequence when $B\Omega^j_Y$ and $B\Omega^{j+1}_Y$ are 
viewed as $\sO_Y$-modules thanks to $F^{n+1}$. By \cite[0, Prop.~2.2.8]{Il79}, they
are then locally free of finite rank over $\sO_Y$, which ends the proof.
\end{proof}

\begin{thm}\label{ssTorind3}
Let $Y$ be a smooth $k$-scheme, and let $\sE$ be a crystal on $Y/W_n$, 
flat over $W_n$. Let $\sE^W_n$ be the evaluation of $\sE$ on $(Y,W_nY)$. Then 
\eq{fTorind3}{  \tor_q^{W_n\sO_Y}(\sE^W_n,\WOm{r}{j}{Y}/p^i\WOm{r}{j}{Y}) = 
\tor_q^{W_n\sO_Y}(\sE^W_n,p^i\WOm{r}{j}{Y}) = 0 }
for $q \geq 1$, $1 \leq r \leq n$, $i \geq 0$, and all $j$. 
\end{thm}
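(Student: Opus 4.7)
The plan is to mirror the proof of Theorem~\nref{ssTorind2}, substituting the $p$-adic filtration for the canonical one and using Lemma~\nref{ssLoclib} in the role played by \cite[I, Cor.\ 3.9]{Il79}. Working locally, I may assume $Y$ is affine and admits a smooth lifting $\Y$ over $W$ together with a Frobenius lift $F$; let $Y_n$ denote the reduction of $\Y$ over $W_n$ and $\sE_n$ the evaluation of $\sE$ on $Y_n$. Exactly as at the start of the proof of Theorem~\nref{ssTorind2}, combining \eqref{fEW} with Lemma~\nref{ssTorind1} (for $i = j = 0$, $r = n$) produces an isomorphism $\sE_n \otimesL_{\sO_{Y_n}} W_n\sO_Y \riso \sE^W_n$ in $D^-(W_n\sO_Y)$; by transitivity of derived tensor products one obtains, for every $W_n\sO_Y$-module $M$,
\eqn{ \sE^W_n \otimesL_{W_n\sO_Y} M \simeq \sE_n \otimesL_{\sO_{Y_n}} M. }
It will therefore suffice to establish both vanishings after replacing $W_n\sO_Y$ and $\sE^W_n$ by $\sO_{Y_n}$ and $\sE_n$.

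The second vanishing will then follow from the first by applying the long Tor sequence to $0 \to p^i\WOm{r}{j}{Y} \to \WOm{r}{j}{Y} \to \WOm{r}{j}{Y}/p^i\WOm{r}{j}{Y} \to 0$, since Lemma~\nref{ssTorind1} (with $i = 0$) already yields $\tor_q^{\sO_{Y_n}}(\sE_n, \WOm{r}{j}{Y}) = 0$ for $q \geq 1$. For the first vanishing, the natural strategy is induction on $i$, the case $i = 0$ being vacuous. For the inductive step $i \geq 1$, the exact sequence \eqref{fpseq} (applied for the constant crystal) reduces matters, via the induction hypothesis for the pair $(r-1, i-1)$, to the single remaining case $i = 1$, namely the vanishing of $\tor_q^{\sO_{Y_n}}(\sE_n, \WOm{r}{j}{Y}/p\WOm{r}{j}{Y})$ for $q \geq 1$.

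That final case will be handled by repeating verbatim the argument of Lemma~\nref{ssTorind1}, with $\WOm{r}{j}{Y}/p\WOm{r}{j}{Y}$ in the role of $\gr^i\WOm{n}{j}{Y}$. Since this module is killed by $p$, its $\sO_{Y_n}$-structure factors through $\sO_Y$; the $W_n$-flatness of $\sE_n$ and $\sO_{Y_n}$ gives $\sE_n \otimesL_{\sO_{Y_n}} \sO_Y \simeq \sE_1$, so it suffices to show $\tor_q^{\sO_Y}(\sE_1, \WOm{r}{j}{Y}/p\WOm{r}{j}{Y}) = 0$. The quasi-nilpotent $p$-curvature filtration on $\sE_1$, together with Cartier's descent, further reduces this to Tor-vanishing against sheaves of the form $F^*\sF^m$; a change of rings combined with Lemma~\nref{ssRedtF} then identifies the resulting Tor with $\tor_q^{\sO_Y}(\sF^m, \WOm{r}{j}{Y}/p\WOm{r}{j}{Y})$ computed for the $\oF$-structure, which vanishes by Lemma~\nref{ssLoclib}. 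The genuine content beyond Theorem~\nref{ssTorind2} is thus entirely packaged into Lemma~\nref{ssLoclib}, so I expect the principal source of care in carrying out the plan to be the bookkeeping of the various $\sO_Y$-module structures induced by $\ot_F$, $\oF$, and Frobenius twists; these manipulations have, however, already been set up in the preceding lemmas.
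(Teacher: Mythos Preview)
Your proposal is correct and follows essentially the same approach as the paper: reduce locally to $\sE_n$ over $\sO_{Y_n}$ via $t_F$, handle the case $i=1$ by the $p$-curvature filtration and Lemma~\ref{ssLoclib}, and propagate to general $i$ using the exact sequence~\eqref{fpseq}. The only cosmetic difference is that you organize the propagation as an induction on $i$ (reducing $(r,i)$ to $(r-1,i-1)$ and $(r,1)$), whereas the paper phrases it as an induction on $r$ with base case $r=1$ supplied by Theorem~\ref{ssTorind2}; you should note that the base $r=1$ must be treated separately since~\eqref{fpseq} is only available for $r\geq 2$, but this is immediate because $\WOm{1}{j}{Y}/p^i=\WOm{1}{j}{Y}$.
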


\begin{proof}
Thanks to Theorem \ref{ssTorind2}, it suffices to prove the vanishing of the left hand 
side. 

The statement is trivial for $i = 0$. Let us prove it first for $i=1$. We may assume 
that $Y$ is affine, and we can choose a smooth formal scheme $\Y$ lifting $Y$ over 
$W$, together with a lifting $F$ of the absolute Frobenius endomorphism de $Y$. Then 
the isomorphism \eqref{fDerEWn} and the transitivity of the derived exension of scalars 
show as in the proof of Theorem \ref{ssTorind2} that it is equivalent to prove the 
relation 
\eq{fRedTorind1}{ \tor_q^{\sO_{Y_n}}(\sE_n,\WOm{r}{j}{Y}/p\WOm{r}{j}{Y}) = 0, }
where $\WOm{r}{j}{Y}/p\WOm{r}{j}{Y}$ is viewed as an $\sO_{Y_n}$-module thanks to 
$t_F$. As this module is annihilated by $p$, the flatness of $\sE_n$ relative to $W_n$ 
implies as in the proof of Lemma \ref{ssTorind1} that it is equivalent to prove that 
\eq{fRedTorind2}{ \tor_q^{\sO_Y}(\sE_1,\WOm{r}{j}{Y}/p\WOm{r}{j}{Y}) = 0, }
where $\sE_1 = \sE_n/p\sE_n$ and $\WOm{r}{j}{Y}/p\WOm{r}{j}{Y}$ is viewed as an
$\sO_Y$-module thanks to the reduction $\overline{t}_F : \sO_Y \to W_r\sO_Y/pW_r\sO_Y$ 
of $t_F$. Repeating again the proof of Lemma \ref{ssTorind1}, we can use the 
$p$-curvature filtration of $\sE_1$ to reduce \eqref{fRedTorind2} to the relation 
\eq{fRedTorind3}{ \tor_q^{\sO_Y}(\sF,\WOm{r}{j}{Y}/p\WOm{r}{j}{Y}) = 0, }
where $\sF$ is an $\sO_Y$-module and $\WOm{r}{j}{Y}/p\WOm{r}{j}{Y}$ is viewed as an
$\sO_Y$-module through the composition $\overline{t}_F \circ F$. As the latter is equal
to $\oF$ by Lemma \ref{ssRedtF}, relation \eqref{fRedTorind3} is then a consequence of
Lemma \ref{ssLoclib}.

This proves that, for $i = 1$,
$\tor_q^{W_n\sO_Y}(\sE^W_n, \WOm{r}{j}{Y}/p^i\WOm{r}{j}{Y})$ vanishes for $q \geq
1$, $1 \leq r \leq n$, and all $j$. Moreover, by Theorem \ref{ssTorind2}, it also
vanishes when $r = 1$ for $q \geq 1$, $i \geq 0$, and all $j$. Using the exact sequences
\eqref{fpseq} and the previous result for $i = 1$, one can then argue by induction on
$r$ to prove that the same vanishing holds for $1 \leq r \leq n$ and all $i \geq 0$.
\end{proof}

\begin{cor}\label{ssupinj2}
Let $Y$ be a smooth $k$-scheme, and let $\sE$ be a crystal on $Y/W_n$ for some $n \geq
2$. If $\sE$ is flat over $W_n$, the homomorphism \eqref{fDefup} is an injection
\eqn{ \up : \sE^W_{n-1} \otimes_{W_{n-1}\sO_Y} \WOm{n-1}{j}{Y} \inj 
\sE^W_n \otimes_{W_n\sO_Y} \WOm{n}{j}{Y}. }
\end{cor}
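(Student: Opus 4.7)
The plan is to deduce the corollary by tensoring a short exact sequence from the constant-coefficient case against $\sE^W_n$, using the Tor-vanishing just established in Theorem \ref{ssTorind3} to preserve left-exactness.

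The first step is to extract, from Illusie's Proposition~3.4 (quoted above as the injectivity of $\up : \WOm{n-1}{j}{Y} \inj \WOm{n}{j}{Y}$) together with the observation that $\up$ is induced by multiplication by $p$ and therefore has image equal to $p\WOm{n}{j}{Y}$, a short exact sequence of $W_n\sO_Y$-modules
$$ 0 \to \WOm{n-1}{j}{Y} \xra{\up} \WOm{n}{j}{Y} \to \WOm{n}{j}{Y}/p\WOm{n}{j}{Y} \to 0. $$
One should note that this is the limit case of \eqref{fpseq}, and is available unconditionally from the injectivity statement.

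Next, I would apply $\sE^W_n \otimes_{W_n\sO_Y} -$ to this sequence. By Theorem \ref{ssTorind3} applied with $r = n$ and $i = 1$, we have $\tor_1^{W_n\sO_Y}(\sE^W_n, \WOm{n}{j}{Y}/p\WOm{n}{j}{Y}) = 0$, so the tensored sequence is still exact, and the resulting map
$$ \sE^W_n \otimes_{W_n\sO_Y} \WOm{n-1}{j}{Y} \inj \sE^W_n \otimes_{W_n\sO_Y} \WOm{n}{j}{Y} $$
is injective. Since $\WOm{n-1}{j}{Y}$ is annihilated by $\Ker(W_n\sO_Y \surj W_{n-1}\sO_Y)$, the isomorphism \eqref{fTransEWn} yields
$$ \sE^W_n \otimes_{W_n\sO_Y} \WOm{n-1}{j}{Y} \simeq \sE^W_n \otimes_{W_n\sO_Y} W_{n-1}\sO_Y \otimes_{W_{n-1}\sO_Y} \WOm{n-1}{j}{Y} \riso \sE^W_{n-1} \otimes_{W_{n-1}\sO_Y} \WOm{n-1}{j}{Y}, $$
which identifies the source of the displayed injection with the source of \eqref{fDefup}.

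The only remaining point is a compatibility verification: one must check that, under this identification, the injection obtained from the tensored exact sequence coincides with the map $\up$ defined in \eqref{fDefup}. This is formal: both are constructed from multiplication by $p$ on $\sE^W_n \otimes_{W_n\sO_Y} \WOm{n}{j}{Y}$ by factoring through the quotient $\WOm{n}{j}{Y}/\Fil^{n-1}\WOm{n}{j}{Y} = \WOm{n-1}{j}{Y}$ and applying \eqref{fTransEWn}, so the two constructions agree by naturality of tensor products. No genuine obstacle appears; the essential input is the Tor-vanishing of Theorem \ref{ssTorind3}, which is what makes the injectivity survive the tensor product with a crystal only assumed flat over $W_n$.
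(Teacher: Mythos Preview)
Your proof is correct and follows essentially the same line as the paper's: both apply Theorem \ref{ssTorind3} to the short exact sequence $0 \to \WOm{n-1}{j}{Y} \xra{\up} \WOm{n}{j}{Y} \to \WOm{n}{j}{Y}/p\WOm{n}{j}{Y} \to 0$ so that tensoring with $\sE^W_n$ preserves injectivity, and then invoke \eqref{fTransEWn} to identify the source with $\sE^W_{n-1} \otimes_{W_{n-1}\sO_Y} \WOm{n-1}{j}{Y}$. The paper is slightly more terse about which Tor group is the relevant one, but the argument is the same.
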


\begin{proof}
Since $\up : \WOm{n-1}{j}{Y} \to \WOm{n}{j}{Y}$ is injective, it follows from the 
theorem that 
\eqn{ \Id \otimes \up : \sE^W_n \otimes_{W_n\sO_Y} \WOm{n-1}{j}{Y} \to 
\sE^W_n \otimes_{W_n\sO_Y} \WOm{n}{j}{Y} }
is injective too. Using \eqref{fTransEWn}, we get an isomorphism 
\eqn{ \sE^W_n \otimes_{W_n\sO_Y} \WOm{n-1}{j}{Y} \riso 
\sE^W_{n-1} \otimes_{W_{n-1}\sO_Y} \WOm{n-1}{j}{Y}, }
which identifies $\Id \otimes \up$ to the homomorphism $\up$ defined by 
\eqref{fDefup} and completes the proof.
\end{proof}

\subsection{}\label{ssComplTens}
We now assume that $\sE$ is a crystal on $Y/W$, flat over $W$, and we consider the
inverse system of $W_n\sO_Y$-modules $\sE^W_n$ defined in \ref{ssCrys} by taking the
evaluation of $\sE$ at all thickenings $(Y,W_nY)$ when $n$ varies. We also assume that
$\sE$ is a \textit{quasi-coherent crystal}, i.e., that, for any PD-thickening $(U,T)$
in $\Cris(Y/W)$, the evaluation $\sE_T$ of $\sE$ on $T$ is a quasi-coherent
$\sO_T$-module. It is equivalent to ask that, for any $n \geq 1$, this condition be
verified for thickenings of the form $(U_\alpha,U_{\alpha,n})$, where $U_\alpha$ varies
in an open covering of $Y$ and $U_{\alpha,n}$ is a smooth lifting of $U_\alpha$ on
$W_n$. Thanks to \cite[I, Prop.\ 1.13.1]{Il79}, the $W_n\sO_Y$-module $\sE^W_n
\otimes_{W_n\sO_Y} \WOm{n}{j}{Y}$ is then quasi-coherent for all $n \geq 1$ and all
$j$.

For all $j$, we define 
\eqn{ \EWOm{\sE}{j}{Y} := \varprojlim_n\,(\sE^W_n \otimes_{W_n\sO_Y} 
\WOm{n}{j}{Y}). }
By construction, we have projections 
\eqn{ \EWOm{\sE}{j}{Y} \to \sE^W_i \otimes_{W_i\sO_Y} \WOm{i}{j}{Y} }
for each $i \geq 1$, and, since the inverse system has surjective transition maps and 
quasi-coherent terms, the Mittag-Leffler criterion implies that these projections are
surjective. We define the \textit{canonical filtration} of $\EWOm{\sE}{j}{Y}$ by
\mln{ \Fil^i (\EWOm{\sE}{j}{Y}) = \\
\begin{cases}
\sEh\; \otimesh_{W\sO_Y} \WOm{}{j}{Y} & \text{if $i \leq 0$}, \\
\Ker( \EWOm{\sE}{j}{Y} \to \sE^W_i \otimes_{W_i\sO_Y} \WOm{i}{j}{Y}) & \text{if $i \geq
1$}.
\end{cases} }
Note that, for $i \geq 1$,
\mln{ \Fil^i (\EWOm{\sE}{j}{Y}) \riso \\
\varprojlim_n \Ker(\sE^W_n \otimes_{W_n\sO_Y} \WOm{n}{j}{Y} \to 
\sE^W_i \otimes_{W_i\sO_Y} \WOm{i}{j}{Y}), }
hence the Mittage-Leffler criterion implies that, for any affine open subset $U \subset
Y$,
\eq{fVanCoh}{ H^q(U, \Fil^i(\EWOm{\sE}{j}{Y})) = 0 }
for $q \geq 1$ and all $i$, $j$. 

For each affine open subset $U \subset Y$, the canonical filtration endows 
$\Gamma(U, \EWOm{\sE}{j}{Y})$ with a topology, which will be called the \textit{canonical 
topology}. From \eqref{fVanCoh}, we deduce the isomorphism  
\eq{fQuotcan}{ \Gamma(U, \EWOm{\sE}{j}{Y}) / \Fil^i\Gamma(U, \EWOm{\sE}{j}{Y}) \riso
\Gamma(U, \sE^W_i \otimes_{\W_i\sO_Y} \WOm{i}{j}{Y}) }
for $i \geq 1$. It follows that $\Gamma(U, \EWOm{\sE}{j}{Y})$ is separated and complete
for the canonical topology.

\begin{prop}\label{sspCompl}
Let $Y$ be a smooth $k$-scheme, and let $\sE$ be a crystal on $Y/W$. Assume that $\sE$
is flat over $W$ and quasi-coherent.

\romain For all $j$, multiplication by $p$ is injective on $\EWOm{\sE}{j}{Y}$. 

\romain If $U \subset Y$ is an affine open subset, then, for all $i \geq 0$ and all
$j$, $p^i\Gamma(U, \EWOm{\sE}{j}{Y})$ is closed in $\Gamma(U, \EWOm{\sE}{j}{Y})$ for 
the canonical topology, and $\Gamma(U, \EWOm{\sE}{j}{Y})$ is separated and complete 
for the $p$-adic topology.
\end{prop}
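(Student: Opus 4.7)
Write $M_n := \sE^W_n \otimes_{W_n\sO_Y} \WOm{n}{j}{Y}$ and $M := \EWOm{\sE}{j}{Y}$, so $M = \varprojlim_n M_n$. The entire plan rests on the fact that, by the very construction of $\up$, multiplication by $p$ on each $M_n$ factors as $M_n \xra{\pi} M_{n-1} \xra{\up} M_n$, where $\pi$ is the canonical reduction and $\up$ is injective by Corollary \ref{ssupinj2}. A short computation using this factorization together with its companion identity that $\pi \circ \up$ equals multiplication by $p$ on $M_{n-1}$ shows that, for $0 \leq i \leq n$, $p^i$ acts on $M_n$ as the composition of the iterated reduction $M_n \to M_{n-i}$ followed by $i$ applications of $\up$. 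Consequently $\ker(p^i\colon M_n \to M_n) = \Fil^{n-i}M_n$, and $M_n$ is annihilated by $p^n$ (since $M_0 = 0$).

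Part (i) is then immediate: if $x = (x_n) \in M$ satisfies $p x = 0$, then $\up(x_{n-1}) = p x_n = 0$ for each $n$, and injectivity of $\up$ forces $x_{n-1} = 0$ for every $n$, hence $x = 0$.

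For part (ii), fix an affine open $U \subset Y$ and write $\Gamma_n := \Gamma(U, M_n)$, $\Gamma := \Gamma(U, M)$; by \eqref{fQuotcan} the projection $\Gamma \to \Gamma_n$ is surjective with kernel $\Fil^n \Gamma$. Let $N_i := \{x \in \Gamma : x_n \in p^i \Gamma_n \text{ for every } n \geq 1\}$. As an intersection of preimages of subgroups under the continuous projections $\Gamma \to \Gamma_n$, $N_i$ is closed in the canonical topology, and trivially $p^i \Gamma \subseteq N_i$. The substantive content is the reverse inclusion: given $x \in N_i$, choose lifts $z_n \in \Gamma_n$ with $p^i z_n = x_n$; the discrepancies $\pi(z_{n+1}) - z_n$ lie in $\ker(p^i\colon \Gamma_n \to \Gamma_n) = \Fil^{n-i}\Gamma_n$, so after further reduction their images land in $\Fil^{N-i}\Gamma_n$, which vanishes once $N \geq n + i$. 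Hence the iterated reductions of $z_N$ to $\Gamma_n$ stabilize for $N \geq n + i$, and the stabilized values assemble into a compatible system $y \in \Gamma$ with $p^i y = x$. Thus $N_i = p^i \Gamma$, which gives closedness of $p^i \Gamma$.

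Separatedness and completeness for the $p$-adic topology then come for free from the vanishing $p^n \Gamma_n = 0$: any element of $\bigcap_i p^i \Gamma$ projects to zero in every $\Gamma_n$ and so equals zero, while for any $p$-adic Cauchy sequence $(y^{(m)})$ in $\Gamma$ the images $y^{(m)}_n$ stabilize for $m \geq n$ to a compatible family $y^* \in \Gamma$ satisfying $y^* - y^{(m)} \in N_m = p^m \Gamma$ for all $m$. The sole real obstacle in this plan is the compatible-lift argument showing $N_i \subseteq p^i \Gamma$: it is precisely there that the injectivity of $\up$ and the eventual vanishing $\Fil^k \Gamma_n = 0$ for $k \geq n$ must be deployed in tandem to patch finite-level lifts into a single element of the inverse limit.
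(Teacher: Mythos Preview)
Your proof is correct and rests on the same key ingredients as the paper's: the factorization $p = \up \circ \pi$ on each $M_n$ together with the injectivity of $\up$ from Corollary~\ref{ssupinj2}, which yields $\ker(p^i\colon M_n \to M_n) = \Fil^{n-i}M_n$ and $p^nM_n = 0$. The execution differs only in style: where the paper packages the argument via a Mittag-Leffler/diagram computation identifying $\Gamma/p^i\Gamma$ with $\varprojlim_n \Gamma/(p^i\Gamma + \Fil^n\Gamma)$ and then invokes Bourbaki for separatedness and completeness, you carry out the explicit patching of finite-level lifts and verify the topological conclusions directly from $p^n\Gamma_n = 0$. Two small points of phrasing could be tightened --- in the patching step it is the image of $\pi(z_{k+1}) - z_k$ in $\Gamma_n$ that lies in $\Fil^{k-i}\Gamma_n$ (vanishing once $k \geq n+i$), and in the completeness argument the stabilization of $y^{(m)}_n$ occurs for $m$ beyond a threshold depending on the Cauchy sequence rather than literally $m \geq n$ --- but both are cosmetic and do not affect the validity of the argument.
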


\begin{proof}
As multiplication by $p$ on $\EWOm{\sE}{j}{Y}$ is the inverse limit of the maps $\up$ 
defined in \eqref{fDefup}, assertion (i) results from Corollary \ref{ssupinj2}.

Let $U$ be an affine open subset of $Y$, and let $i \in \N$. Using assertion (i) and
Corollary \ref{ssupinj2}, we can write a commutative diagram with exact columns
\eqn{ \xymatrix@R=15pt{
0 \ar[d] & 0 \ar[d] \\
\Gamma(U,\EWOm{\sE}{j}{Y}) \ar[d]^(.43){p^i} \ar[r]^-{\sim} & 
\varprojlim_{n>i} \Gamma(U, \sE^W_{n-i} \otimes_{W_{n-i}\sO_Y} \WOm{n-i}{j}{Y}) 
\ar[d]^(.43){\up^i} \\
\Gamma(U,\EWOm{\sE}{j}{Y}) \ar[d] \ar[r]^-{\sim} & 
\varprojlim_{n>i} \Gamma(U, \sE^W_n \otimes_{W_n\sO_Y} \WOm{n}{j}{Y}) \ar[d] \\
\frac{\DS \Gamma(U,\EWOm{\sE}{j}{Y})}{\DS
p^i\Gamma(U,\EWOm{\sE}{j}{Y})} \ar[r] \ar[d] & 
\varprojlim_{n>i}\frac{\DS \Gamma(U, \sE^W_n\otimes_{W_n\sO_Y} \WOm{n}{j}{Y})}{\DS
\up^i\Gamma(U,\sE^W_{n-i}\otimes_{W_{n-i}\sO_Y} \WOm{n-1}{j}{Y})} \ar[d] \\
 0 & 0 
} }
in which the surjectivity in the right hand side column results from the surjectivity
of the transition maps in the inverse system $(\Gamma(U, \sE^W_{n-i}
\otimes_{W_{n-i}\sO_Y} \WOm{n-i}{j}{Y}))_{n>i}$. It follows that the bottom horizontal arrow 
is an isomorphism. The quasi-coherence assumption on $\sE$ implies that 
\eqn{ \up^i\Gamma(U,\sE^W_{n-i}\otimes_{W_{n-i}\sO_Y} \WOm{n-1}{j}{Y})) =
p^i\Gamma(U,\sE^W_n\otimes_{W_n\sO_Y} \WOm{n}{j}{Y}) }
and that the map \eqref{fQuotcan}
\eqn{ \Gamma(U, \EWOm{\sE}{j}{Y}) \to \Gamma(U,\sE^W_n\otimes_{W_n\sO_Y}
\WOm{n}{j}{Y}) }
is surjective. Therefore the quotient in the right hand side column can
be rewritten as
\eqn{ \varprojlim_{n>i}\,\frac{\DS \Gamma(U, \EWOm{\sE}{j}{Y})}{\DS
p^i\Gamma(U, \EWOm{\sE}{j}{Y})+ \Fil^n\Gamma(U, \EWOm{\sE}{j}{Y})}. }
This proves the first part of assertion (ii). As $\Gamma(U, \EWOm{\sE}{j}{Y})$ is
separated and complete for the canonical topology, which is coarser than the $p$-adic
topology, the second part of assertion (ii) follows by \cite[Ch.~III, \S~3, \num
5, Cor.~2 to Prop.~10]{Bou}.
\end{proof}

\medskip
\section{De Rham-Witt cohomology with coefficients}\label{dRWCoeff}
\smallskip

In this section, we extend the classical comparison theorem between the crystalline and
de Rham-Witt cohomologies of a smooth $k$-scheme $Y$ to the case of cohomologies with
coefficients in a crystal on $Y/W_n$ that is flat over $W_n$. 

As indicated in our general conventions, we now assume until the end of the article
that all schemes are quasi-compact and separated. We first recall the construction of
the comparison morphism in the case of constant coefficients \cite[II, 1]{Il79}, 
starting at the level of complexes of Zariski sheaves on $Y$.

\subsection{}\label{ssCohCris}
Let $\P$ be a smooth formal scheme over $W$, with reduction $P_n$ over $W_n$ and
special fibre $P = P_1$. Let $X \subset P$ be a closed subscheme. We denote by $\sJ_n$
(\resp $\sJ$) the ideal of $X$ in $P_n$ (\resp $\P$) and by $\sP_n$ (\resp $\sPh$) the
divided power envelope of $\sJ_n$ (\resp the $p$-adic completion of the divided power
envelope of $\sJ$). In these constructions, we impose that all divided powers be
compatible with the natural divided powers of $p$, which implies that $\sP_n \simeq
\sP_{n+1}/p^n\sP_{n+1}$ for all $n$ and $\sPh = \varprojlim_n \sP_n$. Divided power
envelopes have a natural connection, which allows to define the de Rham complexes
$\sP_n \otimes \Omd{P_n}$ and $\sPh \otimes \Omd{\P}$; these complexes are supported in
$X$. Let $u_{X/W_n}$ (resp.\ $u_{X/W}$) be the projection from the crystalline topos of
$X$ relative to $W_n$ (resp.\ $W$) to the Zariski topos of $X$. In its local form, the
comparison theorem between crystalline and de Rham cohomologies \cite[(7.1.2)]{BO}
provides functorial isomorphisms
\ga{fCrisdRLocn}{ \RR u_{X/W_n*}\,\sO_{X/W_n} \simeq \sP_n \otimes \Omd{P_n/W_n},  \\
\RR u_{X/W*}\,\sO_{X/W} \simeq \sPh \otimes \Omd{\P/W}. \label{fCrisdRLoc} }
in $\Db(X,W_n)$ and $\Db(X,W)$. Taking sections on $X$, one gets in $\Db(W_n)$ and 
$\Db(W)$ the global comparison isomorphisms 
\ga{fCrisdRn}{ \RR\Gamma\cris(X/W_n, \sO_{X/W_n}) \simeq 
\RR\Gamma(X, \sP_n \otimes \Omd{P_n/W_n}), \\
\RR\Gamma\cris(X/W, \sO_{X/W}) \simeq 
\RR\Gamma(X, \sPh \otimes \Omd{\P/W}). \label{fCrisdR}}

These isomorphisms can be generalized to the case where the datum of the embedding $X
\inj \P$ is replaced by the data of an affine open covering $\fU = (U_{\alpha})$ of
$X$ and of closed embeddings $U_{\alpha} \inj \P_{\alpha}$ into smooth formal schemes 
\cite[0, 3.2.6]{Il79}. For each multi-index $\ualpha = (\alpha_0,\ldots,\alpha_i)$,
let $U_{\ualpha} = U_{\alpha_0} \cap \cdots \cap U_{\alpha_i}$, $\P_{\ualpha} =
\P_{\alpha_0} \times_W \cdots \times_W \P_{\alpha_i}$, let $\sJ_{\ualpha} \subset
\sO_{\P_{\ualpha}}$ be the ideal defining the diagonal embeddings $U_{\ualpha} \inj
\P_{\ualpha}$, $\sPh_{\ualpha}$ its completed divided power envelope, $P_{\ualpha,n}$, 
$\sJ_{\ualpha,n}$, $\sP_{\ualpha,n}$ the reductions mod $p^n$ of $\P_{\ualpha}$, 
$\sJ_{\ualpha}$, $\sP_{\ualpha}$, and let $j_{\ualpha} : U_{\ualpha} \inj X$ be the 
inclusion. One can define \v{C}ech double complexes $j_{\sbul\,*}(\sP_{\sbul,n} \otimes 
\Omega\hbul_{P_{\sbul,n}})$ (resp.\ $j_{\sbul\,*}(\sPh_{\sbul} \otimes 
\Omega\hbul_{\P_{\sbul}})$) with general term
\eqn{ \prod_{\ualpha = (\alpha_0,\ldots,\alpha_i)} j_{\ualpha\,*}(\sP_{\ualpha,n} 
\otimes \Omega^j_{P_{\ualpha,n}}) \quad\quad 
(\text{resp.\ }
\prod_{\ualpha = (\alpha_0,\ldots,\alpha_i)} j_{\ualpha\,*}(\sPh_{\ualpha} 
\otimes \Omega^j_{\P_{\ualpha}})
) }
in bidegree $(j,i)$. If one uses the subscript ``t'' to denote the total complex
associated to a multicomplex, one can then generalize the isomorphisms
\eqref{fCrisdRLocn} and \eqref{fCrisdRLoc} as
\ga{fCrisdRLocn2}{ \RR u_{X/W_n*}\,\sO_{X/W_n} \simeq (j_{\sbul\,*}(\sP_{\sbul,n} \otimes 
\Omega\hbul_{P_{\sbul,n}}))\tot,  \\
\RR u_{X/W*}\,\sO_{X/W} \simeq (j_{\sbul\,*}(\sPh_{\sbul} \otimes 
\Omega\hbul_{\P_{\sbul}}))\tot, \label{fCrisdRLoc2} }
and one gets similar genaralizations of \eqref{fCrisdRn} and \eqref{fCrisdR}.

\begin{defn}\label{ssMaptoWn}
Let $\P$ be a formal scheme over $W$, endowed with a $\sigma$-semi\-linear morphism $F
: \P \to \P$ lifting the absolute Frobenius endomorphism of its special fibre $P$, and
let $X \subset P$ be a closed subscheme. For any $n \geq 1$, let $t_F : \sO_{P_n} \to
W_n\sO_P$ be the homomorphism \eqref{fDeftF} defined by $F$. Then the composition
\eq{fComptF}{ \sO_{P_n} \xra{t_F} W_n\sO_P \to W_n\sO_X }
maps $\sJ_n$ to $VW_{n-1}\sO_X \subset W_n\sO_X$. Using the natural divided powers of
the ideal $VW_{n-1}\sO_X$ (which are compatible with the divided powers of $p$), the
universal property of divided power envelopes provides a unique factorization of this
composition through a homomorphism denoted 
\eq{fDefhF}{h_F : \sP_n \to W_n\sO_X,}
which commutes with the divided powers. 
\end{defn}

\subsection{}\label{ssConstCase} 
Let $Y$ be a smooth $k$-scheme, embedded through a closed immersion $Y \inj P$ into 
the special fibre of a smooth formal scheme $\P$ over $W$. Assume that $\P$ 
is endowed with a Frobenius lifting $F$ as in \ref{ssMaptoWn}, and keep for $Y$ the 
notation introduced for $X$ in \ref{ssCohCris}-\ref{ssMaptoWn}. Then the homomorphism
\eqref{fComptF} extends as a morphism of complexes
\eq{fComptF2}{ \Omd{P_n} \to \WOmd{n}{P} \to \WOmd{n}{Y}. }
Thanks to the structure of graded $\sP_n$-algebra defined by $h_F$ on $\WOmd{n}{Y}$,
this morphism defines by extension of scalars from $\sO_{P_n}$ to $\sP_n$ an
$h_F$-semi-linear morphism
\eq{dRtodRWn}{\sP_n \otimes \Omd{P_n} \to \WOmd{n}{Y}, }
which is still a morphism of complexes. When $n$ varies, these morphisms are
compatible, and their inverse limit gives a morphism of complexes
\eq{dRtodRW}{\sPh \otimes \Omd{\P} \to \WOmd{}{Y}.}
The morphisms \eqref{dRtodRWn} and \eqref{dRtodRW} are functorial with respect to the 
triple $(Y, \P, F)$ in the following sense. Let $\P'$ be a second smooth formal
$W$-scheme, with special fibre $P'$, let $Y' \subset P'$ be a smooth closed subscheme,
and let $F' : \P' \to \P'$ be a lifting of the Frobenius endomorphism of $P'$. If $u :
\P' \to \P$ is a $W$-morphism commuting with $F$ and $F'$, and inducing a $k$-morphism 
$f : Y' \to Y$, then we get commutative diagrams
\ga{cstfunctn}{ \xymatrix@C=60pt{
f^{-1}(\sP_n \otimes \Omd{P_n}) \ar[r]^-{f^{-1}(\eqref{dRtodRWn})} \ar[d] & 
f^{-1}(\WOmd{n}{Y}) \ar[d] \\
\sP'_n \otimes \Omd{P'_n} \ar[r]-<22pt,0pt>^-{\eqref{dRtodRWn}} & 
\hspace{18pt}\WOmd{n}{Y'}\hspace{12pt},} \\
\xymatrix@C=60pt{
f^{-1}(\sPh \otimes \Omd{\P}) \ar[r]^-{f^{-1}(\eqref{dRtodRW})} \ar[d] & 
f^{-1}(\WOmd{}{Y}) \ar[d] \\
\sPh' \otimes \Omd{\P'} \ar[r]-<20pt,0pt>^-{\eqref{dRtodRW}} & 
\hspace{18pt}\WOmd{}{Y'}\hspace{12pt},} \label{cstfunct}}
where the vertical maps are the functoriality morphisms. 

The morphisms \eqref{dRtodRWn} and \eqref{dRtodRW} are quasi-isomorphisms, and, via 
\eqref{fCrisdRLocn} and \eqref{fCrisdRLoc}, they define the comparison isomorphism
between crystalline and de Rham-Witt cohomologies \cite[II, 1.4]{Il79}. In the derived
category $\Db(Y,W_n)$ (\resp $\Db(Y,W)$) of sheaves of $W_n$-modules (\resp
$W$-modules) on $Y$, this comparison isomorphism does not depend on the choice of
$(\P,F)$ (by the standard argument comparing two embeddings $Y \inj (\P,F)$ and $Y \inj
(\P',F')$ to the diagonal embedding into $(\P \times_W \P', F \times F')$ via the
projection maps).

If $Y$ is quasi-projective, one can always find such an embedding $(\P,F)$, since it
suffices to choose for $\P$ an open subscheme of a projective space, endowed with the
morphism induced by some lifting of the Frobenius endomorphism of the projective space.
In the general case, one can choose an affine open covering $\fU = (U_{\alpha})$ of
$Y$ and closed immersions $U_{\alpha} \inj \P_{\alpha}$ into smooth formal schemes
endowed with a lifting of Frobenius. As in \ref{ssCohCris}, one can then identify $\RR
u_{Y/W_n\,*} \sO_{Y/W_n}$ with the total complex associated to the \v{C}ech double
complex $j_{\sbul\,*}(\sP_{\sbul,n} \otimes \Omega\hbul_{P_{\sbul,n}})$ (resp.\ $\RR
u_{Y/W\,*} \sO_{Y/W}$, $j_{\sbul\,*}(\sPh_{\sbul} \otimes \Omega\hbul_{\P_{\sbul}})$).
On the other hand, the total complex associated to the \v{C}ech double complex
$\check{C}\hbul(\fU, \WOmd{n}{Y})$ (resp.~$\check{C}\hbul(\fU, \WOmd{}{Y})$) provides a
resolution of $\WOmd{n}{Y}$ (resp.~$\WOmd{}{Y}$). Endowing each $\P_{\ualpha}$ with 
the product $F_{\alpha_0}\times \cdots \times F_{\alpha_i}$ and using the functoriality
\eqref{cstfunctn} (resp.~\eqref{cstfunct}), one gets between these double complexes a
morphism that is defined on each intersection $U_{\ualpha}$ by the corresponding
morphism $\eqref{dRtodRWn}$ (\resp \eqref{dRtodRW}). Since each of these is a
quasi-isomorphism, so is the morphism induced between the associated total complexes.
In the derived categories $D(Y, W_n)$ and $D(Y, W)$, this provides isomorphisms
\ga{dRtodRWn2}{ \RR u_{Y/W_n\,*} \sO_{Y/W_n} \riso \WOmd{n}{Y}, \\
\RR u_{Y/W\,*} \sO_{Y/W} \riso \WOmd{}{Y}, }
which extend the comparison isomorphisms \eqref{dRtodRWn} and \eqref{dRtodRW} to the general
case and do not depend on the choices (see \cite[II, 1.1]{Il79} for details).

\subsection{}\label{ssCrys2}
Let us keep the hypothesis of the existence of an embedding $Y \inj \P$ of $Y$ into a
smooth formal scheme, and let $\sE$ be a crystal on $Y/W_n$ for some $n \geq
1$ (\resp a crystal on $Y/W$).

We now denote by $\sE_n$ the evaluation of $\sE$ on the thickening $(Y, \Spec(\sP_n))$.
By construction, $\sE_n$ is a $\sP_n$-module, and, by restriction of scalars, it can
also be viewed as an $\sO_{P_n}$-module. As such, it inherits from the crystal
structure of $\sE$ an integrable connection $\nabla_n : \sE_n \to \sE_n \otimes
\Omega^1_{P_n/W_n}$, compatible with the natural connection on $\sP_n$. This connection
allows to define the de Rham complex $\sE_n \otimes \Omd{P_n/W_n}$. When $\sE$ is a
crystal on $Y/W$, it can be viewed as a compatible family of crystals on $Y/W_n$ for
all $n$. Thus, $\sE$ defines for all $n$ a $\sP_n$-module $\sE_n$ endowed with an
integrable connection $\nabla_n$, such that $\sE_n \simeq \sE_{n+1}/p^n\sE_{n+1}$ as a
module with connection. Then the $\sPh$-module $\sEh = \varprojlim_n \sE_n$ has a
connection $\nabla = \varprojlim_n \nabla_n$, and we get similarly the de Rham complex
$\sEh \otimes \Omd{\P/W}$. We recall that the datum of $(\sE_n, \nabla_n)$ (\resp
$(\sEh,\nabla)$) is equivalent to the datum of the crystal $\sE$.

As in the case of cohomology with constant coefficients, the comparison theorem between
crystalline and de Rham cohomologies yields functorial isomorphisms \cite[Th.~7.1]{BO}
\ga{fustardRn}{ \RR u_{Y/W_n*}\,\sE \simeq \sE_n \otimes \Omd{P_n/W_n},  \\
\RR\Gamma\cris(Y/W_n,\sE) \simeq \RR\Gamma(Y, \sE_n
\otimes \Omd{P_n/W_n}), \label{fisocrysdRn}}
and, when $\sE$ is a quasi-coherent crystal on $Y/W$ \cite[Th.~7.23]{BO},
\ga{fustardR}{ \RR u_{Y/W*}\,\sE \simeq \sEh \otimes \Omd{\P/W}, \\
\RR\Gamma\cris(Y/W,\sE) \simeq \RR\Gamma(Y, \sEh \otimes 
\Omd{\P/W}). \label{fisocrysdR}}
If $\fU =(U_{\alpha})$ is an affine open covering of $Y$ and $(U_{\alpha} \inj 
\P_{\alpha})$ a family of closed immersions into smooth formal schemes over 
$W$, these isomorphisms can be generalized by replacing $\sE_n \otimes \Omd{P_n/W_n}$ 
and $\sEh \otimes \Omd{\P/W}$ with the total complexes $(j_{\sbul\,*}(\sE_{P_{\sbul,n}} \otimes 
\Omd{P_{\sbul,n}/W_n}))\tot$ and $(j_{\sbul\,*}(\sEh_{\P_{\sbul}} \otimes 
\Omd{\P_{\sbul}/W}))\tot$ defined as in \ref{ssCohCris}.

\subsection{}\label{ssMorphisms}
We keep the notation $\sE^W_n$ used in \ref{ssCrys} for the evaluation of $\sE$ on the
thickening $(Y,W_nY)$. As shown by \'Etesse \cite[II, 2]{Et88}, the crystal structure
of $\sE$ also provides a differential $\nabla_n^W : \sE_n^W \to \sE_n^W
\otimes_{W_n\sO_Y} \WOm{n}{1}{Y}$, which extends so as to define a complex $\sE_n^W
\otimes_{W_n\sO_Y} \WOmd{n}{Y}$.

Let us assume again that $\P$ is endowed with a lifting $F$ of the absolute Frobenius
endomorphism of $P$. Then the PD-morphism $h_F : \sP_n \to W_n\sO_Y$ defined in
\eqref{fDefhF} corresponds to a morphism of thickenings $W_nY \to \Spec(\sP_n)$, and
the crystal structure of $\sE$ provides a natural $h_F$-semi-linear map
\eqn{ h_{\sE,F} : \sE_n \lra \sE_n^W, }
the linear factorization of which gives an isomorphism 
\eq{fEPW}{ 1 \otimes h_{\sE,F} : W_n\sO_Y \otimes_{\sP_n} \sE_n \riso \sE_n^W }
(generalizing \eqref{fEW}, where $Y = P$). We can then take the tensor product over
$h_F : \sP_n \to W_n\sO_Y$ of the morphism $h_{\sE,F}$ with the semi-linear morphism of 
graded modules $\sP_n \otimes \Omd{P_n} \to \WOmd{n}{Y}$ defined in \eqref{dRtodRWn},
and this defines an $h_F$-semi-linear morphism of graded modules 
\eq{dREtodRWEn}{\sE_n \otimes_{\sO_{P_n}} \Omd{P_n} \riso 
\sE_n \otimes_{\sP_n} (\sP_n \otimes_{\sO_{P_n}} \Omd{P_n}) \lra 
\sE_n^W \otimes_{W_n\sO_Y} \WOmd{n}{Y}. }
It is easy to deduce from the constructions of $\nabla_n$ and $\nabla_n^W$ that this
morphism is actually a morphism of complexes. When $\sE$ is a crystal on $Y/W$, the
inverse limit of these morphisms when $n$ varies defines a morphism
\eq{dREtodRWE}{\sEh \otimes_{\sO_{\P}} \Omd{\P} \lra \sEh^W 
\otimesh_{W\sO_Y} \WOmd{}{Y},}
where the completed tensor product is defined as in \ref{ssComplTens} by
\eqn{ \sEh^W \otimesh_{W\sO_Y} \WOmd{}{Y} = 
\varprojlim_n\,(\sE_n^W \otimes_{W_n\sO_Y} \WOmd{n}{Y}). }

As in the constant case, these morphisms are functorial with respect to $(Y,\P,F)$.
Keeping the same notation and hypotheses than in \ref{ssConstCase}, let $\sE' =
f^{\ast}\cris(\sE)$ be the inverse image of $\sE$, which is the crystal on $Y'$
corresponding to the inverse image connection $(\sE'_n, \nabla'_n) =
(\tu_n^{\ast}(\sE_n),\tu_n^{\ast}(\nabla_n))$ on $P'_n$, $\tu_n$ denoting the morphism 
of ringed spaces $(P'_n,\sP'_n) \to (P_n,\sP_n)$ defined by $u$. Then the following
square is commutative:
\eq{fonct}{ \xymatrix@C=60pt{
f^{-1}(\sE_n \otimes_{\sO_{P_n}} \Omd{P_n}) \ar[r]^-{f^{-1}(\eqref{dREtodRWEn})}
\ar[d] & f^{-1}(\sE_n^W \otimes_{W_n\sO_Y} \WOmd{n}{Y}) \ar[d] \\
\sE'_n \otimes_{\sO_{P'_n}} \Omd{P'_n} \ar[r]-<54pt,0pt>^-{\eqref{dREtodRWEn}} &
\hspace{17pt}\sE'_n{}^W \otimes_{W_n\sO_{Y'}} \WOmd{n}{Y'}\hspace{9pt}.
} }
When $\sE$ is a crystal on $Y/W$, there is a similar diagram 
based on \eqref{dREtodRWE}. 

When $f$ is the identity on $Y$, the right vertical arrow of diagram \eqref{fonct} is
the identity, and the left vertical arrow is a quasi-isomorphism which corresponds in
the derived category to the identity on crystalline cohomology, via the canonical
identifications \eqref{fustardRn} between the crystalline cohomology of $\sE$ and the
de Rham cohomologies of $\sE_n$ and $\sE'_n$ on $P_n$ and $P'_n$. It follows that, in
$\Db(Y,W_n)$ and $\Db(Y,W)$, the morphisms \eqref{dREtodRWEn} and \eqref{dREtodRWE} do
not depend on the choice of $(\P,F)$ when they are viewed as morphisms from crystalline
cohomology to de Rham-Witt cohomology via \eqref{fustardRn} and \eqref{fustardR}.

Finally, using \v{C}ech complexes as in the case of cohomology with constant
coefficients, these functorialities also allow to extend to the general case (i.e.,
without assuming the existence of an embedding $Y \inj (\P, F)$ as above) the
construction of the morphisms \eqref{dREtodRWEn} and \eqref{dREtodRWE} as morphisms 
\ga{fCrisdRWLocn}{ \RR u_{Y/W_n\,*}\,\sE \lra \sE_n^W \otimes_{W_n\sO_Y} \WOmd{n}{Y}, \\
\RR u_{Y/W*}\,\sE \lra \sEh^W \otimesh_{W\sO_Y} \WOmd{}{Y} \label{fCrisdRWLoc} }
in the corresponding derived categories. Taking global sections, one obtains morphisms
\ga{morcrisdRWE}{\RR\Gamma\cris(Y/W_n,\sE) \lra 
\RR\Gamma(Y, \sE_n^W \otimes_{W_n\sO_Y} \WOmd{n}{Y}), \\
\ \ \RR\Gamma\cris(Y/W,\sE) \lra \RR\Gamma(Y, \sEh^W \otimesh_{W\sO_Y}
\WOmd{}{Y})\ \ \label{morcrisdRWEh}}
(as usual, we assume in \eqref{fCrisdRWLoc} and \eqref{morcrisdRWEh} that $\sE$ is
quasi-coherent).

\medskip
Our basic comparison theorem between crystalline and de Rham-Witt cohomologies 
with coefficients is then the following:

\begin{thm}\label{ssCrysnEtodRWE}
Let $k$ be a perfect field of characteristic $p$, and let $Y$ be a smooth $k$-scheme.
For some $n \in \N$, let $\sE$ be a crystal on $Y/W_n$. If $\sE$ is flat over $W_n$,
the comparison morphisms \eqref{fCrisdRWLocn} and \eqref{morcrisdRWE} are isomorphisms 
respectively in $\Db(Y,W_n)$ and $\Db(W_n)$.
\end{thm}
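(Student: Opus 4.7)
My plan is to check the local statement \eqref{fCrisdRWLocn} on an affine open cover and to reduce it, by means of the Tor-independence results of Section \ref{Torind}, to the already known constant-coefficient comparison theorem \cite[II, Th. 1.4]{Il79}. Both sides of \eqref{fCrisdRWLocn} are complexes of sheaves on $Y$ and the morphism has been constructed via \v{C}ech methods compatibly with restriction to open subsets, so I may assume that $Y$ is affine and admits a smooth formal $W$-lifting $\Y$ equipped with a lift $F$ of the absolute Frobenius of $Y$. Taking $P = Y$ in the constructions of \ref{ssMorphisms} then represents \eqref{fCrisdRWLocn} by the explicit $h_F$-semi-linear morphism of complexes
$$\sE_n \otimes_{\sO_{Y_n}} \Omd{Y_n} \lra \sE_n^W \otimes_{W_n\sO_Y} \WOmd{n}{Y},$$
whose source also represents $\RR u_{Y/W_n*}\sE$ via \eqref{fustardRn}.

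The idea is now to realize both sides as derived tensor products of $\sE_n$ along the ring map $t_F : \sO_{Y_n} \to W_n\sO_Y$. Since each $\Omega^j_{Y_n}$ is locally free of finite rank over $\sO_{Y_n}$, the source is simply $\sE_n \otimesL_{\sO_{Y_n}} \Omd{Y_n}$. On the other side, the isomorphism \eqref{fEW} identifies $\sE_n^W \otimes_{W_n\sO_Y} \WOmd{n}{Y}$ with $\sE_n \otimes_{\sO_{Y_n}} \WOmd{n}{Y}$, where $\sO_{Y_n}$ acts on $\WOmd{n}{Y}$ through $t_F$; but this is precisely the setting of Lemma \ref{ssTorind1}, which applied with $i = 0$ and $r = n$ gives $\tor_q^{\sO_{Y_n}}(\sE_n, \WOm{n}{j}{Y}) = 0$ for all $q \geq 1$ and all $j$, so this ordinary tensor product also represents the derived tensor product $\sE_n \otimesL_{\sO_{Y_n}} \WOmd{n}{Y}$. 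Under the two identifications, the displayed morphism becomes $\sE_n \otimesL_{\sO_{Y_n}} (-)$ applied to the constant-coefficient comparison morphism $\Omd{Y_n} \to \WOmd{n}{Y}$ of \eqref{dRtodRWn}, which is a quasi-isomorphism by \cite[II, Th. 1.4]{Il79}. Hence so is its derived tensor product with $\sE_n$, proving \eqref{fCrisdRWLocn}; the global statement \eqref{morcrisdRWE} then follows by applying $\RR\Gamma(Y,-)$.

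The main obstacle in this plan is entirely concentrated in the appeal to Lemma \ref{ssTorind1}: the sheaves $\WOm{n}{j}{Y}$ have an intricate $W_n\sO_Y$-module structure, and there is no a priori reason why flatness of the evaluation $\sE_n$ over $W_n$ should force Tor-independence with them over $\sO_{Y_n}$ for the $t_F$-structure. That is precisely the nontrivial content of Section \ref{Torind}, where one reduces modulo $p$ using $W_n$-flatness of $\sE$, applies Cartier descent to the successive quotients of the $p$-curvature filtration on $\sE_1$ so as to rewrite them as Frobenius pullbacks, and then exploits the local freeness of $\gr^i\WOm{n}{j}{Y}$ over $\sO_Y$ for the Frobenius-twisted structure \cite[I, 3.9]{Il79}. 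Granting that input, the proof of the theorem reduces, as above, to a formal manipulation of derived tensor products, and the argument propagates to the version of \eqref{fCrisdRWLocn} and \eqref{morcrisdRWE} without a global Frobenius lift through the \v{C}ech double-complex construction recalled in \ref{ssConstCase}.
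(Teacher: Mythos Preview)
There is a genuine gap in your argument, and it lies precisely at the step where you claim that ``the displayed morphism becomes $\sE_n \otimesL_{\sO_{Y_n}} (-)$ applied to the constant-coefficient comparison morphism $\Omd{Y_n} \to \WOmd{n}{Y}$''. The de Rham differential $d$ on $\Omd{Y_n}$ is not $\sO_{Y_n}$-linear, and neither is the differential on $\WOmd{n}{Y}$; hence these are not complexes in $D(\sO_{Y_n})$, and the derived tensor product $\sE_n \otimesL_{\sO_{Y_n}}(-)$ is not defined on them as complexes. What you have checked, via local freeness of $\Omega^j_{Y_n}$ and via Lemma \ref{ssTorind1}, is only a term-by-term identification $\sE_n \otimes_{\sO_{Y_n}} \Omega^j_{Y_n} \simeq \sE_n \otimesL_{\sO_{Y_n}} \Omega^j_{Y_n}$ and similarly on the de Rham-Witt side. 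But the differential on the source complex $\sE_n \otimes_{\sO_{Y_n}} \Omd{Y_n}$ is $\nabla_n \wedge (-) + \Id \otimes d$, which genuinely depends on the connection on $\sE_n$, not merely on $\sE_n$ as an $\sO_{Y_n}$-module. There is therefore no formal reason why a quasi-isomorphism $\Omd{Y_n} \to \WOmd{n}{Y}$ should remain a quasi-isomorphism after ``tensoring with $\sE_n$'' in this sense.

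This is exactly the difficulty the paper's proof addresses. The paper first filters both sides (by the $p$-adic filtration on the left, the canonical filtration on the right) to reduce to a statement modulo $p$, and then filters $\sE_1$ by the $p$-curvature filtration. On each successive quotient $\sE_1^m/\sE_1^{m-1}$ the connection has $p$-curvature zero, so by Cartier descent it is of the form $F^{\ast}\sF^m$ with connection $\Id_{\sF^m}\otimes d$. Only at that point does the de Rham complex of the coefficient become an honest $\sO_Y$-linear tensor product $\sF^m \otimes_{\sO_Y} F_*\Omd{Y}$ (resp.\ $\sF^m \otimes_{\sO_Y} F_*\gr^i\WOmd{n}{Y}$), and only then can one invoke that $\up^i : \Omd{Y} \to \gr^i\WOmd{n}{Y}$ is a quasi-isomorphism between bounded complexes of locally free $\sO_Y$-modules (for the $\oF$-structure), so that tensoring with an arbitrary $\sO_Y$-module $\sF^m$ preserves the quasi-isomorphism. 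Your shortcut skips this reduction, and without it the conclusion does not follow.
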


This generalizes earlier results of \'Etesse \cite[Th\'eor\`eme
2.1]{Et88} and Langer-Zink \cite[Theorem 3.8]{LZ04}, where $\sE$ is
assumed to be flat over $\sO_{Y/W_n}$.
  
The first assertion of the theorem implies the second one. To prove the first one, one 
can make further reductions. Indeed, taking an affine open covering $(U_\alpha)$ of
$Y$, choosing closed immersions of each $U_\alpha$ in a smooth formal scheme
$\P_\alpha$ endowed with a Frobenius lifting, and using the definition of
\eqref{fCrisdRWLocn} as a morphism of $\Db(Y,W_n)$ thanks to \v{C}ech resolutions, it
suffices to prove that, on each intersection $U_{\ualpha}$, the corresponding morphism
\eqref{dREtodRWEn} of the category of complexes is a quasi-isomorphism. So we are
reduced to proving that, when $Y$ can be embedded in a smooth formal scheme $\P$
endowed with a Frobenius lifting, the morphism of complexes \eqref{dREtodRWEn} is a
quasi-isomorphism. This is a local statement on $Y$. As this morphism does not depend
in the derived category on the choice of $(\P,F)$, we may localize and assume that
$Y$, together with its Frobenius endomorphism, can be lifted as a smooth formal scheme
$\Y$ over $W$, with reduction $Y_n$ over $W_n$. Thus we may assume that $\P = \Y$, and
it suffices to prove the following local form of the theorem:

\begin{thm}\label{ssCrysnEtodRWEloc}
Let $k$ be a perfect field of characteristic $p$, and let $Y$ be a smooth $k$-scheme. 
Assume that there exists a smooth formal scheme $\Y$ over $W$ lifting $Y$, with
reduction $Y_n$ over $W_n$, and a semi-linear endomorphism $F$ of $\Y$ lifting the
absolute Frobenius endomorphism of $Y$. For some $n \in \N$, let $\sE$ be a crystal on
$Y/W_n$. If $\sE$ is flat over $W_n$, the morphism of complexes
\eq{fCrysnEtodRWEloc}{{\sE_n \otimes_{\sO_{Y_n}} \Omd{Y_n} \to \sE_n^W 
\otimes_{W_n\sO_Y} \WOmd{n}{Y}}, } 
defined by \eqref{dREtodRWEn}, is a quasi-isomorphism.
\end{thm}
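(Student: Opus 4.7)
\emph{Proof plan.} My plan is to identify the morphism \eqref{fCrysnEtodRWEloc} in the derived category $\Db(W_n)$ of sheaves of $W_n$-modules on $Y$ with the derived-tensor-product extension $\Id_{\sE_n} \otimesL f$ of the classical Bloch--Illusie quasi-isomorphism $f : \Omd{Y_n} \riso \WOmd{n}{Y}$ from \eqref{dRtodRWn}. Granting this identification, the conclusion follows at once, since $f$ is a quasi-isomorphism by \cite[II, 1.4]{Il79} and derived tensoring preserves quasi-isomorphisms.

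The key identifications come from the Tor-independence theorems of Section~\ref{Torind}. First, by \eqref{fDerEWn} (a consequence of Lemma~\ref{ssTorind1}), we have $\sE_n \otimesL_{\sO_{Y_n}} W_n\sO_Y \riso \sE_n^W$ in $\Db$. Combined with the associativity of the derived tensor product and Theorem~\ref{ssTorind2} (which implies $\sE_n^W \otimesL_{W_n\sO_Y} \WOmd{n}{Y} \simeq \sE_n^W \otimes_{W_n\sO_Y} \WOmd{n}{Y}$), we obtain a canonical isomorphism
$$\sE_n^W \otimes_{W_n\sO_Y} \WOmd{n}{Y} \;\simeq\; \sE_n \otimesL_{\sO_{Y_n}} \WOmd{n}{Y}$$
in $\Db(W_n)$, where $\WOmd{n}{Y}$ on the right is regarded as a complex of $\sO_{Y_n}$-modules via $t_F$. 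Since $\Omd{Y_n}$ is locally free over $\sO_{Y_n}$, the source of \eqref{fCrysnEtodRWEloc} already represents $\sE_n \otimesL_{\sO_{Y_n}} \Omd{Y_n}$.

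Unwinding the construction of \eqref{dREtodRWEn} as the tensor product over $t_F : \sO_{Y_n} \to W_n\sO_Y$ of the semi-linear maps $h_{\sE,F}$ and $f$, together with the description \eqref{fEPW} of $\sE_n^W$ as a base change of $\sE_n$, one verifies that under the isomorphism above the morphism \eqref{fCrysnEtodRWEloc} corresponds precisely to $\Id_{\sE_n} \otimesL f$. As noted, this map is a quasi-isomorphism, and this concludes the argument.

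The main substantive work in this plan is absorbed into the Tor-independence theorems of Section~\ref{Torind}: without the hypothesis that $\sE$ is flat over $W_n$, these vanishings fail and the above derived identifications break down. This is precisely the new input compared to \cite{Et88} and \cite{LZ04}, where $\sE$ was assumed flat over $\sO_{Y/W_n}$. As a variant, one can mirror Illusie's constant-coefficient strategy directly by equipping the source with the $p$-adic filtration and the target with the canonical filtration (controlled by Corollary~\ref{ssFilCanEnW} and \eqref{fTransEWn}), verifying that \eqref{fCrysnEtodRWEloc} is filtration-preserving, and then invoking Section~\ref{Torind} to reduce the quasi-isomorphism on associated gradeds to the constant-coefficient case, where the graded pieces become strictly perfect complexes of $\sO_Y$-modules for an appropriate Frobenius-twisted $\sO_Y$-structure.
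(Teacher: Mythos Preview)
Your main argument has a genuine gap. The complexes $\Omd{Y_n}$ and $\WOmd{n}{Y}$ are \emph{not} complexes of $\sO_{Y_n}$-modules: their differentials are derivations, not $\sO_{Y_n}$-linear maps. Consequently the expression ``$\sE_n \otimesL_{\sO_{Y_n}} f$'' has no meaning as a morphism in any derived category, and the sentence ``the source already represents $\sE_n \otimesL_{\sO_{Y_n}} \Omd{Y_n}$'' conflates the de Rham complex with coefficients (whose differential involves the connection $\nabla_n$) with a derived tensor product of a module and a complex. Even if one works term by term, where the Tor-independence results do identify each degree, tensoring a quasi-isomorphism of complexes of $W_n$-modules against an $\sO_{Y_n}$-module does \emph{not} in general preserve quasi-isomorphisms unless the differentials are $\sO_{Y_n}$-linear. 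The de Rham complex itself furnishes the standard counterexample: $\Omd{\mathbb{A}^1_k}$ is quasi-isomorphic to $k$, but tensoring over $k[x]$ with $k[x]/(x)$ destroys this.

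The ``variant'' you sketch in your final sentence is in fact the paper's proof, and it is the right path; but your sketch omits its essential ingredient. After passing to the $p$-adic filtration on the source and the canonical filtration on the target, and after using Corollary~\ref{ssFilCanEnW} and \eqref{fEW} to rewrite the graded pieces as $\sE_1 \otimes_{\sO_Y} \Omd{Y}$ and $\sE_1 \otimes_{\sO_Y} \gr^i\WOmd{n}{Y}$, one is \emph{still} not in a position to tensor the constant-coefficient quasi-isomorphism $\underline{p}^i : \Omd{Y} \to \gr^i\WOmd{n}{Y}$ with $\sE_1$, for the same reason as above: the differentials are not $\sO_Y$-linear, and the map on the left still involves the connection on $\sE_1$. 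The paper resolves this by filtering $\sE_1$ by its $p$-curvature filtration, so that each graded piece has $p$-curvature $0$ and hence, by Cartier descent, is of the form $F^{\ast}\sF^m$ with connection $\Id_{\sF^m}\otimes d$. Only then do the de Rham complexes become honest $\sO_Y$-linear complexes after Frobenius pushforward (Lemma~\ref{ssRedtF} matches the $\sO_Y$-structure on $\gr^i\WOmd{n}{Y}$ via $\oF$ with the one needed), and only then is tensoring with $\sF^m$ legitimate. This $p$-curvature step is not a cosmetic detail: it is precisely what converts the problem into one about strictly perfect complexes of $\sO_Y$-modules, and it cannot be absorbed into the Tor-independence results of Section~\ref{Torind} alone.
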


\begin{proof}
If we endow the complex $\sE \otimes \Omd{Y_n}$ with the $p$-adic 
filtration and the complex $\sE_n^W \otimes_{W_n\sO_Y} \WOmd{n}{Y}$ with the canonical 
filtration defined in \eqref{fDefFilCanE}, the morphism \eqref{dREtodRWEn} becomes a
morphism of filtered complexes, and we are reduced to proving that the morphisms
\eq{fgrCrysnEtodRWEloc}{p^i\sE\otimes_{\sO_{Y_n}}\Omd{Y_n} / 
p^{i+1}\sE\otimes_{\sO_{Y_n}}\Omd{Y_n} \to
\gr^i(\sE_n^W \otimes_{W_n\sO_Y} \WOmd{n}{Y}) } 
are quasi-isomorphisms for all $i$.

As the morphism \eqref{dREtodRWEn} is $h_F$-semi-linear, the morphism
\eqref{fCrysnEtodRWEloc} is $t_F$-semi-linear. If we denote as before by $\ot_F$ the
reduction modulo $p$ of $t_F$, we can view $\gr^i\WOmd{n}{Y}$ and $\gr^i(\sE_n^W 
\otimes_{W_n\sO_Y} \WOmd{n}{Y})$ as $\sO_Y$-modules through $\ot_F$. The morphism 
\eqref{fgrCrysnEtodRWEloc} is then $\sO_Y$-linear. 

Using \eqref{fFilCanEnW}, \eqref{fEW}, and the fact that $\gr^i\WOmd{n}{Y}$ is
annihilated by $p$, we obtain natural linear isomorphisms of complexes
$$ \sE_1\otimes_{\sO_Y}\gr^i\WOmd{n}{Y} \riso 
\gr^i(\sE_n^W\otimes_{W_n\sO_Y}\WOmd{n}{Y}). $$ 
Multiplication by $p^i$ induces a linear morphism of complexes
$$ \underline{p}^i : \sE_1\otimes_{\sO_Y}\gr^0\WOmd{n}{Y} \to 
\sE_1\otimes_{\sO_Y}\gr^i\WOmd{n}{Y}. $$ 
On the other hand, since $\sE_n$ is flat over $W_n$, multiplication by $p^i$ induces a 
linear isomorphism of complexes
$$ \underline{p}^i : \sE_1\otimes_{\sO_Y}\Omd{Y} \xrightarrow{\ \sim\ \,} 
p^i\sE\otimes_{\sO_{Y_n}}\Omd{Y_n} / 
p^{i+1}\sE\otimes_{\sO_{Y_n}}\Omd{Y_n}. $$ 
As in \cite[II 1.4]{Il79}, these morphisms fit in commutative diagrams
\eqn{ \xymatrix{
\sE_1\otimes_{\sO_Y}\Omd{Y} \ar[r]^-{\sim} \ar[d]^-{\wr}_-{\underline{p}^i} & 
\sE_1\otimes_{\sO_Y}\gr^0\WOmd{n}{Y} \ar[d]_-{\underline{p}^i}\\
p^i\sE\otimes_{\sO_{Y_n}}\Omd{Y_n} / p^{i+1}\sE\otimes_{\sO_{Y_n}}\Omd{Y_n} \ar[r] & 
\sE_1\otimes_{\sO_Y}\gr^i\WOmd{n}{Y},
} }
in which the upper horizontal isomorphism follows from the identification between
$\gr^0\WOmd{n}{Y}$ and $\Omd{Y}$. Hence it suffices to prove that the morphisms
$$ \underline{p}^i : \sE_1\otimes_{\sO_Y}\Omd{Y} \to 
\sE_1\otimes_{\sO_Y}\gr^i\WOmd{n}{Y}
$$ 
are quasi-isomorphisms for all $i$.

As in the proof of Lemma \ref{ssTorind1}, we can endow $\sE_1$ with an exhaustive
filtration by horizontal submodules $\sE_1^m$ such that $\sE_1^m/\sE_1^{m-1}$ has
$p$-curvature $0$. Since the filtration is exhaustive, it suffices to prove that the
above morphism is a quasi-isomorphism when $\sE_1$ is replaced by $\sE_1^m$ for any
$m$, and it is even sufficient to prove it for the quotients $\sE_1^m/\sE_1^{m-1}$. As
their connections have $p$-curvature $0$, they are of the form $F^{\ast}\sF^m$ for some
$\sO_Y$-modules $\sF^m$, as $\sO_Y$-modules with connection. So it is enough to check
that the morphisms of complexes
\eq{fPerfQis}{ \underline{p}^i : \sF^m\otimes_{\sO_Y}F_{\ast}\Omd{Y} \to 
\sF^m\otimes_{\sO_Y}F_{\ast}\gr^i\WOmd{n}{Y} } 
are quasi-isomorphisms. From \cite[I 3.14]{Il79}, we know that the morphisms
$\underline{p}^i:\Omd{Y}\to\gr^i\WOmd{n}{Y}$ are quasi-isomorphisms, and both complexes
are linear complexes of locally free finitely generated $\sO_Y$-modules when $\sO_Y$
acts by $F$ on $\Omd{Y}$ and by $\oF :\sO_Y \to W_n\sO_Y/pW_n\sO_Y$ on
$\gr^i\WOmd{n}{Y}$ \cite[I, 3.9]{Il79}. By Lemma \ref{ssRedtF}, $\oF = \ot_F \circ F$,
so the $\sO_Y$-action on $F_{\ast}\gr^i\WOmd{n}{Y}$ in \eqref{fPerfQis} is the one
given by $\oF$. Moreover, the connection on $F^{\ast}\sF^m$ is simply
$\Id_{\sF^m}\otimes d$, where the tensor product is taken through $F$. Therefore these
quasi-isomorphisms remain quasi-isomorphisms after tensorizing with $\sF^m$.
\end{proof}

\begin{cor}\label{ssCrysEtodRWE}
Let $k$ be a perfect field of characteristic $p$, and let $Y$ be a smooth $k$-scheme.
Let $\sE$ be a crystal on $Y/W$, flat over $W$ and such that the $\sO_Y$-module $\sE_1$
defined by $\sE$ is quasi-coherent.

\romain If there exists a smooth formal scheme $\Y$ over $W$ lifting $Y$ and a
semi-linear endomorphism $F : \Y \to \Y$ lifting the absolute Frobenius endomorphism of
$Y$, then the morphism of complexes
\eq{fCrysEtodRWE}{{\sEh \otimes_{\sO_{\Y}} \Omd{\Y} \to \sEh^W 
\otimesh_{W\sO_Y} \WOmd{}{Y}},}
defined by \eqref{dREtodRWE}, is a quasi-isomorphism.

\romain In the general case, the comparison morphisms \eqref{fCrisdRWLoc} and 
\eqref{morcrisdRWEh} are isomorphisms in $\Db(Y,W)$ and $\Db(W)$.
\end{cor}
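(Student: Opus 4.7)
The plan is to deduce both assertions from the mod $p^n$ result Theorem \ref{ssCrysnEtodRWEloc} applied to the reductions of $\sE$ modulo $p^n$: since $\sE$ is flat over $W$, each such reduction is a crystal on $Y/W_n$ that is flat over $W_n$. For part (i), I would take the inverse limit over $n$ of the finite-level quasi-isomorphisms; for part (ii), I would globalize the local result via the \v{C}ech construction described at the end of \ref{ssConstCase}.

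For part (i), set $A_n = \sE_n \otimes_{\sO_{Y_n}} \Omd{Y_n}$ and $B_n = \sE_n^W \otimes_{W_n\sO_Y} \WOmd{n}{Y}$. Theorem \ref{ssCrysnEtodRWEloc} provides, for each $n$, a quasi-isomorphism $A_n \riso B_n$, compatible with reduction in $n$. By definition, the target of \eqref{fCrysEtodRWE} is $\varprojlim_n B_n$; and since $\Omd{\Y}$ is locally free of finite rank over $\sO_\Y$, the source $\sEh \otimes_{\sO_\Y} \Omd{\Y}$ coincides with $\varprojlim_n A_n$. Both inverse systems have termwise surjective transition maps: on the left, because $\sE_n \surj \sE_{n-1}$ and $\Omd{Y_n}$ is flat over $\sO_{Y_n}$; on the right, from the description of the canonical filtration \eqref{fDefFilCanE} together with Corollary \ref{ssFilCanEnW}. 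Termwise surjectivity entails the Mittag-Leffler condition, hence $R\varprojlim = \varprojlim$ on each of the two systems. Since $R\varprojlim$, as a right adjoint, preserves quasi-isomorphisms of inverse systems of complexes (it sends acyclic cones to acyclic cones), the limit map $\varprojlim_n A_n \to \varprojlim_n B_n$ is a quasi-isomorphism, which is exactly the claim of (i).

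For part (ii), I would follow the template at the end of \ref{ssConstCase}. Choose an affine open covering $\fU = (U_\alpha)$ of $Y$ together with closed immersions $U_\alpha \inj \P_\alpha$ into smooth formal $W$-schemes equipped with Frobenius liftings $F_\alpha$. For each multi-index $\ualpha$, the diagonal immersion $U_{\ualpha} \inj \P_{\ualpha} = \prod_i \P_{\alpha_i}$ is equipped with the product Frobenius lifting $\prod_i F_{\alpha_i}$, so part (i) yields a quasi-isomorphism on $U_{\ualpha}$. Assembling these into \v{C}ech double complexes that compute $\RR u_{Y/W\,*}\sE$ and $\sEh^W \otimesh_{W\sO_Y} \WOmd{}{Y}$ respectively, and passing to the associated total complexes, one obtains the morphism \eqref{fCrisdRWLoc} and proves it is an isomorphism in $\Db(Y,W)$. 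Taking $\RR\Gamma(Y,-)$ then yields the isomorphism \eqref{morcrisdRWEh} in $\Db(W)$.

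The main technical point lies in the inverse-limit step of part (i): the surjectivity of transition maps, supplied by the flatness of $\sE$ over $W$ together with the structure of the canonical filtration, is what allows one to pass from the finite-level quasi-isomorphisms to a quasi-isomorphism of their limits. Once this is in hand, the \v{C}ech gluing in (ii) is entirely formal.
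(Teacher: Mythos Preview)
Your overall strategy matches the paper's: pass from the mod $p^n$ quasi-isomorphisms of Theorem \ref{ssCrysnEtodRWEloc} to the limit via $\RR\varprojlim$, and then globalize by \v{C}ech resolutions. There is, however, one genuine gap in your inverse-limit step.

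You assert that ``termwise surjectivity entails the Mittag-Leffler condition, hence $R\varprojlim = \varprojlim$.'' For inverse systems of abelian \emph{groups} this is fine, but you are working with inverse systems of \emph{sheaves}, and in that category surjectivity of transition maps does not by itself force $R^1\varprojlim$ to vanish (the category of abelian sheaves does not in general satisfy AB4$^*$). What is needed, and what the paper supplies, is the quasi-coherence hypothesis on $\sE_1$: together with flatness over $W$ it gives quasi-coherence of every $\sE_n$, hence of each $\sE_n \otimes \Omega^j_{Y_n}$ and each $\sE_n^W \otimes_{W_n\sO_Y}\WOm{n}{j}{Y}$ (cf.\ \ref{ssComplTens}). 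These sheaves then have vanishing higher cohomology on affine opens, so surjectivity of the sheaf transition maps becomes surjectivity on sections over affines, and the algebraic Mittag-Leffler criterion yields $\varprojlim$-acyclicity. You never invoke the quasi-coherence hypothesis, so as written your argument for (i) is incomplete at exactly this point.

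A smaller remark on (ii): when you say ``part (i) yields a quasi-isomorphism on $U_{\ualpha}$,'' note that $\P_{\ualpha}$ is not a lifting of $U_{\ualpha}$ but a larger ambient formal scheme, so (i) does not apply literally. One must use, as in the reduction preceding Theorem \ref{ssCrysnEtodRWEloc}, that the morphism \eqref{dREtodRWE} is independent of the embedding in $\Db(U_{\ualpha},W)$, and then replace $\P_{\ualpha}$ by a genuine lifting of the affine $U_{\ualpha}$. This is routine, and the paper's proof of (ii) proceeds in the same spirit.
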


\begin{proof}
The quasi-coherence of $\sE_1$ and the flatness of $\sEh$ over $W$ imply that $\sE_n
\simeq \sEh/p^n\sEh$ is quasi-coherent for all $n$. Then the terms of the two complexes
of inverse systems $\sE\lbul \otimes \Omd{Y\lbul}$ and $\sE^W\lbul
\otimes_{W\lbul\sO_Y} \WOmd{\lbul}{Y}$ have vanishing cohomology on open affine subsets
and surjective transition maps. Therefore, these are complexes with
$\varprojlim$-acyclic terms, and we obtain equalities
\begin{eqnarray*}
\RR\varprojlim (\sE\lbul \otimes \Omd{Y\lbul}) & = & 
\varprojlim (\sE\lbul \otimes \Omd{Y\lbul}),\\
\RR\varprojlim (\sE^W\lbul \otimes_{W\lbul\sO_Y} \WOmd{\lbul}{Y}) & = & 
\varprojlim (\sE^W\lbul \otimes_{W\lbul\sO_Y} \WOmd{\lbul}{Y}).
\end{eqnarray*}
When $n$ varies, the quasi-isomorphisms \eqref{fCrysnEtodRWEloc} provide a
quasi-isomorphism of complexes of inverse systems. Applying the
$\RR\varprojlim$ functor, it follows that \eqref{fCrysEtodRWE}
is a quasi-isomorphism.

As in the proof of Theorem \ref{ssCrysnEtodRWE}, assertion (ii) follows from assertion 
(i) by taking an affine covering of $Y$, choosing closed immersions of the open subsets of 
the covering into smooth formal schemes endowed with Frobenius liftings, and 
representing the morphism \eqref{dREtodRWE} in $\Db(Y,W)$ thanks to the associated 
\v{C}ech resolutions.
\end{proof}

\subsection{}\label{ssImDirCris}
As an example of application of Theorem \ref{ssCrysnEtodRWE}, we now show that, for 
local complete intersections, crystalline cohomology can be computed using the de
Rham-Witt complexes of smooth embeddings. We first recall a few facts about crystalline
direct images by a closed immersion \cite[Prop.\ 6.2]{BO}.

Let $Y$ be a $k$-scheme, and let $i : X \inj Y$ be a closed immersion defined by an 
ideal $\sI \subset \sO_Y$. For any PD-thickening $(U,T,\delta)$ of an open subset $U
\subset Y$, we denote by $\sI_T$ the ideal of $X \cap U$ in $T$. Let $i\cris{}_{\,*}$
be the direct image functor from the category of sheaves on $\Cris(X/W_n)$ (resp.\
$\Cris(X/W)$) to the category of sheaves on $\Cris(Y/W_n)$ (resp.\ $\Cris(Y/W)$). Then:

\alphab The sheaf $i\cris{}_{\,*}\,\sO_{X/W_n}$ (resp.\ $i\cris{}_{\,*}\,\sO_{X/W}$) is 
a crystal in $\sO_{Y/W_n}$-algebras (resp.\ $\sO_{Y/W}$-algebras) supported in $X$. Its
evaluation on a PD-thickening $(U,T,\delta)$ is given by
\eq{fImDirCrisT}{ (i\cris{}_{\,*}\,\sO_{X/W_n})_T  = \sP_\delta(\sI_T)}
(resp.\ $(i\cris{}_{\,*}\,\sO_{X/W})_T$), where $\sP_\delta$ denotes the divided power
envelope with compatibility with $\delta$.

\alphab For all $q \geq 1$, $R^q i\cris{}_{\,*}\,\sO_{X/W_n} = 0$ (resp.\ $R^q
i\cris{}_{\,*}\,\sO_{X/W} = 0$).

\begin{thm}\label{ssApplCris}
Let $Y$ be a smooth $k$-scheme, and let $i : X \inj Y$ be a regular immersion defined
by an ideal $\sI \subset \sO_Y$. For each $n \geq 1$, set $W_n\sI = \Ker(W_n\sO_Y \surj
W_n\sO_X)$. Let $\sP_n^W$ be the PD-envelope of $W_n\sI$ with compatibility with the
canonical divided powers of $VW_{n-1}\sO_Y$, and let $\sPh^W = \varprojlim_n \sP_n^W$.
There exists functorial isomorphisms
\ga{fApplCrisLocn}{ \RR u_{X/W_n\,*}\,\sO_{X/W_n} \riso \sP_n^W \otimes_{W_n\sO_Y} 
\WOmd{n}{Y}, \\
\RR u_{X/W\,*}\,\sO_{X/W}  \riso \sPh^W \otimesh_{W\sO_Y} \WOmd{}{Y}, 
\label{fApplCrisLoc} }
respectively in $\Db(X,W_n)$ and $\Db(X, W)$, and 
\ga{fApplCrisn}{ \RR\Gamma\cris(X/W_n, \sO_{X/W_n}) \riso \RR\Gamma(X, \sP_n^W \otimes_{W_n\sO_Y} 
\WOmd{n}{Y}), \\
\RR\Gamma\cris(X/W, \sO_{X/W}) \riso \RR\Gamma(X, \sPh^W \otimesh_{W\sO_Y} \WOmd{}{Y}), 
\label{fApplCris} }
respectively in $\Db(W_n)$ and $\Db(W)$. 
\end{thm}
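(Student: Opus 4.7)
The plan is to apply Theorem \ref{ssCrysnEtodRWE} (resp.\ Corollary \ref{ssCrysEtodRWE}) to the crystal $\sE := i\cris{}_{\,*}\,\sO_{X/W_n}$ (resp.\ $\sE := i\cris{}_{\,*}\,\sO_{X/W}$) on $Y$. By fact (a) of \ref{ssImDirCris}, this is a crystal on $Y/W_n$ (resp.\ $Y/W$), and formula \eqref{fImDirCrisT}, applied to the thickening $(Y,W_nY)$ whose defining PD-ideal $VW_{n-1}\sO_Y$ carries its canonical divided powers, immediately identifies $\sE_n^W$ with $\sP_n^W$: indeed, the ideal of $X$ in $W_nY$ is $W_n\sI + VW_{n-1}\sO_Y$, and enlarging $W_n\sI$ by an already-PD subideal does not change the PD-envelope. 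The target of the expected isomorphism therefore matches that of the comparison theorem applied to $\sE$.

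The critical step will be to verify that $\sE$ is flat over $W_n$ (resp.\ over $W$) in the sense of Definition \ref{ssFlatCryst}. I plan to work locally: for a smooth lifting $U_n$ of an open subset $U\subset Y$ over $W_n$, \eqref{fImDirCrisT} presents $\sE_{U_n}$ as the PD-envelope of the ideal $\sI_{U_n}\subset\sO_{U_n}$ defining $X\cap U$ in $U_n$, compatible with the canonical divided powers on $(p)$. Since $i$ is regular, on a small enough $U$ the ideal $\sI$ is generated by a regular sequence $f_1,\ldots,f_r\in\sO_U$; for any lifts $\tilde f_1,\ldots,\tilde f_r\in\sO_{U_n}$ one has $\sI_{U_n}=(p,\tilde f_1,\ldots,\tilde f_r)$, and the $W_n$-flatness of $U_n$ makes this a regular sequence in $\sO_{U_n}$. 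The standard computation of the PD-envelope of a regular immersion (\cite{BO}) then identifies $\sE_{U_n}$ with the divided-power polynomial algebra $\sO_{U_n}\langle\xi_1,\ldots,\xi_r\rangle$, which is $\sO_{U_n}$-free and therefore $W_n$-flat. In the $W$ case, the same argument at every level $n$ gives flatness over $W$, while the quasi-coherence of $\sE_1$ required by Corollary \ref{ssCrysEtodRWE} is immediate since $\sE_1$ is the quasi-coherent PD-algebra of $\sI$ in $\sO_Y$. This flatness verification is the main obstacle; everything else is formal.

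Granted flatness, Theorem \ref{ssCrysnEtodRWE} (resp.\ Corollary \ref{ssCrysEtodRWE}) provides an isomorphism
\eqn{ \RR u_{Y/W_n\,*}\,\sE\ \riso\ \sE_n^W\otimes_{W_n\sO_Y}\WOmd{n}{Y}\ =\ \sP_n^W\otimes_{W_n\sO_Y}\WOmd{n}{Y} }
in $\Db(Y,W_n)$, and its analogue in $\Db(Y,W)$. To rewrite the left-hand side in terms of $X$, I will use fact (b) of \ref{ssImDirCris}, which gives $\RR i\cris{}_{\,*}\,\sO_{X/W_n}=\sE$, together with the identity $u_{Y/W_n}\circ i\cris{}=i\circ u_{X/W_n}$ of morphisms of topoi and the exactness of the closed-immersion direct image $i_*$. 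These yield
\eqn{ \RR u_{Y/W_n\,*}\,\sE\ =\ \RR u_{Y/W_n\,*}\RR i\cris{}_{\,*}\,\sO_{X/W_n}\ \simeq\ i_*\,\RR u_{X/W_n\,*}\,\sO_{X/W_n}, }
and restricting to $X$ produces \eqref{fApplCrisLocn}; the same manoeuvre yields \eqref{fApplCrisLoc}. Finally, applying $\RR\Gamma(X,-)$ gives the global statements \eqref{fApplCrisn} and \eqref{fApplCris}.
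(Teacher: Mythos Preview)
Your global strategy is exactly the paper's: apply Theorem \ref{ssCrysnEtodRWE} (resp.\ Corollary \ref{ssCrysEtodRWE}) to $\sE = i\cris{}_{\,*}\sO_{X/W_n}$, identify $\sE_n^W$ with $\sP_n^W$ via \eqref{fImDirCrisT}, and use the vanishing in \ref{ssImDirCris}~b) together with $u_{X/W_n} = u_{Y/W_n}\circ i\cris$ to rewrite the left-hand side. The identification of $\sE_n^W$ and the passage from $Y$ to $X$ are correct.

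The flatness step, however, contains two slips. First, $(p,\tilde f_1,\ldots,\tilde f_r)$ is \emph{not} a regular sequence in $\sO_{U_n}$, since $p$ is nilpotent in $W_n$; what one checks is that the lifted sequence $(\tilde f_1,\ldots,\tilde f_r)$ is regular in $\sO_{U_n}$ with $W_n$-flat quotient, and that the PD-envelope of $\sI_{U_n}=(p,\tilde f_1,\ldots,\tilde f_r)$ compatible with the divided powers of $(p)$ coincides with that of $(\tilde f_1,\ldots,\tilde f_r)$. Second, this PD-envelope is \emph{not} $\sO_{U_n}\langle\xi_1,\ldots,\xi_r\rangle$: in the latter the $\tilde f_i\in\sO_{U_n}$ and the PD-variables $\xi_i$ are independent, so the surjection $\sO_{U_n}\langle\xi_1,\ldots,\xi_r\rangle\to\sE_{U_n}$ sending $\xi_i\mapsto\tilde f_i$ has nontrivial kernel (already for $r=1$ one sees $\sE_{U_n}$ is not $\sO_{U_n}$-free, since $t\cdot t^{[k]}=(k+1)t^{[k+1]}$). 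The correct description, valid once $\sO_{U_n}/(\tilde f_1,\ldots,\tilde f_r)$ is known to be $W_n$-flat, is
\[
\sE_{U_n}\ \simeq\ \bigl(\sO_{U_n}/(\tilde f_1,\ldots,\tilde f_r)\bigr)\langle\xi_1,\ldots,\xi_r\rangle,
\]
which is free over the quotient and therefore $W_n$-flat. The paper bypasses this explicit identification and simply cites \cite[2.3.3--2.3.4]{BBM} for the $W_n$-flatness of the PD-envelope of a regular ideal with flat quotient; either route repairs your argument.
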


\begin{proof}
As $u_{X/W_n} = u_{Y/W_n} \circ i\cris$, we get from \ref{ssImDirCris} b) a 
canonical isomorphism 
\eq{fXonY}{ \RR u_{X/W_n\,*}\,\sO_{X/W_n} \riso \RR u_{Y/W_n\,*}(i\cris{}_{\,*}\,\sO_{X/W_n}). }
To define \eqref{fApplCrisLocn}, it suffices to check that the crystal
$i\cris{}_{\,*}\,\sO_{X/W_n}$ is flat over $W_n$ and that its evaluation on $W_nY$ is 
canonically isomorphic to $\sP_n^W$: Theorem \ref{ssCrysnEtodRWE} will then provide 
an isomorphism 
\eq{fIsoPnW}{ \RR u_{Y/W_n\,*}(i\cris{}_{\,*}\,\sO_{X/W_n}) \riso \sP_n^W 
\otimes_{W_n\sO_Y} \WOmd{n}{Y}, }
and we will obtain \eqref{fApplCrisLocn} as the composition of \eqref{fXonY} and 
\eqref{fIsoPnW}. 

To check the first claim, consider an affine open subset $U \subset Y$ on which $\sI$ is
defined by a regular sequence $(t_1,\ldots,t_d)$. Let $U_n$ be a smooth lifting
of $U$ over $W_n$ and $(\tt_1,\ldots,\tt_d)$ a sequence of sections of $\sO_{U_n}$ 
lifting $(t_1,\ldots,t_d)$. It is easy to check that $(\tt_1,\ldots,\tt_d)$ is a 
regular sequence in $\sO_{U_n}$ and that $\sO_{U_n}/(\tt_1,\ldots,\tt_d)$ is flat over 
$W_n$. The ideal of $X$ in $U_n$ is $\sI_n = (p, \tt_1,\ldots,\tt_d)$, and, by
\eqref{fImDirCrisT}, the evaluation of $i\cris{}_{\,*}\,\sO_{X/W}$ on $U_n$ is the
divided power envelope $\sP_n$ of $\sI_n$ with compatibility with the divided powers 
of $(p)$. This divided power envelope is also the divided power envelope of the ideal 
$(\tt_1,\ldots,\tt_d)$ with compatibility with the divided powers of $(p)$, and, by 
\cite[2.3.3 and 2.3.4]{BBM}, it is flat over $W_n$. Therefore, the crystal
$i\cris{}_{\,*}\,\sO_{X/W}$ is flat over $W_n$. 

On the other hand, \eqref{fImDirCrisT} shows that the evaluation of
$i\cris{}_{\,*}\,\sO_{X/W_n}$ on $W_nY$ is the PD-envelope $\sP_{\can}(\sI_{W_nY})$,
where the subscript $\can$ denotes compatibility with the canonical divided powers of
$VW_{n-1}\sO_Y$. As the ideal $\sI_{W_nY}$ can be written
\eqn{ \sI_{W_nY} = W_n\sI + VW_{n-1}\sO_Y, }
the second claim follows. 

To define the isomorphism \eqref{fApplCrisLoc}, one constructs the family of
isomorphisms \eqref{fApplCrisLocn} for variable $n$ so as to obtain an isomorphism in
the derived category of inverse systems, and one can then apply the $\RR\varprojlim_n$
functor. Taking global sections, one gets \eqref{fApplCrisn} and \eqref{fApplCris}.
\end{proof}

\medskip
\section{Rigid cohomology as a limit of crystalline cohomologies}\label{RigCoh}
\smallskip

Our goal now is to apply the comparison theorems of the previous section to the
computation of rigid cohomology with compact supports in terms of de Rham-Witt
complexes, for a separated $k$-scheme of finite type. The key case is the case of a
proper $k$-scheme, and our method is based on the interpretation of the rigid
cohomology of such a scheme as a limit of suitable crystalline cohomologies (see
\cite[section 2]{Cr04} for a closely related point of view).

\subsection{}\label{ssTubes} 
We briefly recall some basic constructions used in the 
definition of rigid cohomology for proper $k$-schemes (see \cite{Be96}, \cite{Be97a}, 
\cite{LS07} for details and proofs).

Let $\P$ be a smooth formal $W$-scheme, and let $\P_K$ denote its generic fibre in
Raynaud's sense, which is a rigid analytic space over $K$. If $\P$ is affine, defined
by $\P = \Spf A$ where $A$ is topologically of finite type over $W$, then $\P_K$ is the
affinoid space $\Spm A_K$ defined by the Tate algebra $A_K$. In general, $\P_K$ is
endowed with a specialisation morphism $\spm : \P_K \to \P$, which is a continuous map
such that $\spm^{-1}(\U) = \U_K$ for any open subset $\U \subset \P$. By construction,
we have 
\eqn{ \spm_{\ast}\sO_{\P_K}\ =\ \sO_{\P,K}\ :=\ \sO_{\P}\otimes K. }

Let $P$ be the special fibre of $\P$, and let $X \inj P$ be a closed subscheme defined
by an ideal $\sJ \subset \sO_\P$. The tube of $X$ in $\P_K$ is the subset $]X[_\P\, :=
\spm^{-1}(X)$. If $\U = \Spf A \subset \P$ is an affine open subset, and if 
$f_1,\ldots,f_r \in A$ is a family of generators of $\Gamma(\U, \sJ)$, then
\eq{desctube}{ ]X[_{\P} \cap \U_K\ = \{ x \in \U_K\ |\ \forall i, |f_i(x)| < 1 \}, }
where, for a point $x \in \U_K$ corresponding to a maximal ideal $\mathfrak{m} \subset
A_K$, $|f_i(x)|$ is the absolute value of the class of $f_i$ in the residue field $K(x)
= A_K/\mathfrak{m}$. In particular, $]X[_\P$ is an admissible open subset of $\P_K$.

As the specialization morphism $\spm : \P_K\to\P$ maps $]X[_{\P}$ to $X$, one defines a
sheaf of $\sO_{\P,K}$-algebras supported on $X$ by setting
$$ \sA_{X,\P}\ =\ \spm_{\ast}\sO_{]X[_{\P}}.$$
For any affine open subset $\U \subset \P$ and any $j$, there is a canonical
isomorphism $\Gamma(\U_K,\Omega^j_{\P_K}) \simeq \Gamma(\U, \Omega^j_{\P})\otimes K$,
which is compatible with differentiation. It follows that the canonical derivation of
$\sO_{]X[}$ endows $\sA_{X,\P}$ with an integrable connection. Therefore, one
can define the de Rham complex $\sA_{X,\P}\otimes\Omd{\P}$, and, taking into account
that each $\Omega^j_{\P}$ is locally free of finite rank, one gets a canonical
isomorphism of complexes
$$ \sA_{X,\P}\otimes_{\sO_{\P}} \Omd{\P}\ \simeq\ 
\spm_{\ast}(\Omd{]X[_{\P}}).$$
If $\U \subset \P$ is affine, it follows from \eqref{desctube} that $]X[_\P \cap \U_K$
is quasi-Stein \cite[Definition 2.3]{Ki67}, hence Kiehl's vanishing theorem implies
that $R^i\spm_{\ast}(\Omega^j_{]X[_{\P}}) = 0$ for all $j$ and all $i \geq 1$.
Therefore, we obtain
\eq{fDefAXP}{ \sA_{X,\P}\otimes_{\sO_{\P}} \Omd{\P}\ \simeq\ 
\RR\spm_{\ast}(\Omd{]X[_{\P}}) }
in the derived category $\Db(X,K)$.

As an object of $\Db(X,K)$, the complex $\sA_{X,\P}\otimes \Omd{\P}$ does not depend
(up to canonical isomorphism) on the choice of the embedding $X \inj \P$, and it is
functorial with respect to $X$ \cite[1.5]{Be97a}. We will use the notation 
\eq{defCohRigLoc}{ \RR\uGamma\rig(X/K) := \sA_{X,\P}\otimes \Omd{\P} }
to denote $\sA_{X,\P}\otimes \Omd{\P}$ as an object of $\Db(X, K)$. Assuming that $X$
is a proper $k$-scheme and can be embedded in a smooth formal scheme $\P$ as above, its
rigid cohomology is defined by setting
\eqa{defCohRig}{\RR\Gamma\rig(X/K) & = & \RR\Gamma(X, \RR\uGamma\rig(X/K)) \\ 
& = & \RR\Gamma(X, \sA_{X,\P}\otimes\Omd{\P}) \simeq \RR\Gamma(]X[_{\P}, \Omd{]X[_{\P}}). 
\notag}

In the general case, where such an embedding might not exist, one can generalize this
definition using a \v{C}ech style construction analogous to the description of the 
complex $\RR u_{X/W*}\,\sO_{X/W}$ given in \ref{ssCohCris} (see also
\cite[pp.~28-29]{Ha75}, where the analogous construction is used for algebraic de Rham
cohomology in characteristic $0$). One can choose a covering of $X$ by affine open
subsets $U_\alpha$, and, for each $\alpha$, a closed immersion of $U_\alpha$ in a
smooth affine formal scheme $\P_\alpha$ over $W$. Then the diagonal immersions for the
finite intersections 
\eqn{ U_{\ualpha} :=  U_{\alpha_0} \cap \ldots \cap U_{\alpha_i} \inj \P_{\ualpha} := 
\P_{\alpha_0} \times_W \cdots \times_W \P_{\alpha_i} }
provide algebras $\sA_{U_{\ualpha},\P_{\ualpha}}$. From the de Rham complexes of the
products $\P_{\ualpha}$ with coefficients in these algebras, one can build as in
\ref{ssCohCris} a \v{C}ech double complex. As an object of $D(X, K)$, the associated
total complex does not depend on the choices, and it is functorial with respect to $X$. It
will still be denoted by $\RR\uGamma\rig(X/K)$. When $X$ is proper, its rigid cohomology is
again defined by $\RR\Gamma\rig(X/K) := \RR\Gamma(X, \RR\uGamma\rig(X/K))$.

\subsection{}\label{ssFlatEnv}
To construct a de Rham-Witt complex computing rigid cohomology for proper varieties, we
will use a description of the algebra $\sA_{X,\P}$ based on PD-envelopes. 

Let $\sJ \subset \sO_{\P}$ be a coherent ideal. We denote 
by $\sP(\sJ)$ the divided power envelope of $\sJ$ (with compatibility with the
natural divided powers of $p$). By definition, $\sP(\sJ)$ is the sheaf of
$\sO_{\P}$-algebras associated to the presheaf $\U \mapsto \sP(\Gamma(\U,\sJ))$, where
$\sP(\Gamma(\U,\sJ))$ is the PD-envelope of the ideal $\Gamma(\U,\sJ)
\subset \Gamma(\U,\sO_{\P})$. However, the formation of PD-envelopes
commutes with flat ring extensions \cite[3.21]{BO}. Since $\sJ$ is coherent over
$\sO_{\P}$ and $\Gamma(\V,\sO_{\P})$ is flat over $\Gamma(\U,\sO_{\P})$ for any couple
of affine open subsets $\V \subset \U$ in $\P$, it follows that the functor $\U \mapsto
\sP(\Gamma(\U,\sJ))$ satisfies the glueing condition on the category of affine open
subsets of $\P$. This implies that
\eq{sectPDEnv}{\Gamma(\U, \sP(\sJ)) = \sP(\Gamma(\U, \sJ))}
for any affine open subset $\U \subset \P$.

We denote by $\sPh(\sJ)$ the $p$-adic completion of $\sP(\sJ)$. As divided power
envelopes may have some $p$-torsion, we also introduce the ideal $\sT(\sJ)$ of 
$p$-torsion sections of $\sP(\sJ)$, the quotient $\sPb(\sJ) = 
\sP(\sJ)/\sT(\sJ)$, and its $p$-adic completion $\sPt(\sJ)$. Note that the
natural connection of the divided power envelope $\sP(\sJ)$ extends to the algebras
$\sPh(\sJ)$, $\sPb(\sJ)$ and $\sPt(\sJ)$.

We first observe that these algebras have quasi-coherent reduction mod $p^n$:

\begin{lem}\label{ssQcoh}
Let $\sJ \subset \sO_{\P}$ be a coherent ideal, $n \geq 1$ an integer, and $P_n$ the 
reduction of $\P$ on $W_n$. Define 
\ga{}{\sP_n(\sJ) = \sP(\sJ)/p^n\sP(\sJ),\quad\quad 
\sPb_n(\sJ) = \sPb(\sJ)/p^n\sPb(\sJ),\notag\\
\sT_n(\sJ) = \Ker(p^n : \sP(\sJ) \to \sP(\sJ)).\notag}
Then:\\
\romain $\sP_n(\sJ)$ and $\sPb_n(\sJ)$ are quasi-coherent
$\sO_{P_n}$-algebras.\\
\romain $\sT_n(\sJ)$ is a quasi-coherent $\sO_{P_n}$-module.
\end{lem}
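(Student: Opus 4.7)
The plan is to build explicit quasi-coherent candidates on $P_n$ by gluing affine pieces, and then to identify them with $\sP_n(\sJ)$, $\sPb_n(\sJ)$, and $\sT_n(\sJ)$ respectively. The decisive input is \eqref{sectPDEnv}, whose derivation from flat base change for PD-envelopes I will use systematically below.

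For any affine open $\U = \Spf A \subset \P$ with $I = \Gamma(\U, \sJ)$, I associate to the affine scheme $\U \cap P_n = \Spec A_n$ the quasi-coherent sheaves $\widetilde{\sP(I)/p^n}$, $\widetilde{\sPb(I)/p^n}$, and $\widetilde{\sT_n(\sP(I))}$. To see that these local constructions glue, it suffices to check their behavior on a principal open $\V = D(f) \subset \U$ with $B = A\{f^{-1}\}$: since $A \to B$ is flat, the flat base change property of PD-envelopes yields $\sP(IB) \simeq B \otimes_A \sP(I)$, and flatness further ensures that tensoring with $B$ commutes with reduction mod $p^n$, with the $p^n$-torsion submodule, and with the quotient by the $p$-torsion submodule defining $\sPb$. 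This matches the localizations of the $A_n$-modules above with the corresponding modules computed intrinsically on $\V$, so the affine candidates patch together into quasi-coherent $\sO_{P_n}$-modules $\sP_n^{qc}$, $\sPb_n^{qc}$, $\sT_n^{qc}$ on $P_n$.

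Next, I produce canonical comparison morphisms $\sP_n(\sJ) \to \sP_n^{qc}$, $\sPb_n(\sJ) \to \sPb_n^{qc}$, $\sT_n(\sJ) \to \sT_n^{qc}$: on each affine $\U$ they come from the natural projections applied to the sections $\sP(I)$ and $\sPb(I)$, and from the inclusion of $p^n$-torsion; compatibility with restriction maps is exactly the base change computation above. To verify that these comparison maps are isomorphisms, I compare stalks at a point $x \in |P|$ corresponding to an open prime $\mathfrak{p} \subset A$. The stalk of $\sP(\sJ)$ at $x$, computed as the filtered colimit of $\sP(IA\{g^{-1}\}) \simeq A\{g^{-1}\} \otimes_A \sP(I)$ over $g \notin \mathfrak{p}$, is identified with the localization $\sP(I)_\mathfrak{p}$; reducing mod $p^n$, taking $p^n$-torsion, or dividing out $p$-torsion then identifies both stalks, giving the required isomorphisms.

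The main obstacle is that $\sP(\sJ)$ itself is not manifestly quasi-coherent in the formal scheme setting, because PD-envelopes are rarely finitely generated over $A$, so one cannot directly invoke the usual quasi-coherence criteria applied to $\sP(\sJ)$. The content of the lemma is that this pathology disappears after reducing mod $p^n$ or taking $p^n$-torsion (which moves us onto the honest scheme $P_n$), and this is precisely what the flat base change for PD-envelopes — combined with the fact that flatness preserves kernels — delivers.
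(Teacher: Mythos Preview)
Your argument is correct and essentially uniform: you exploit flat base change for PD-envelopes (as recorded in \eqref{sectPDEnv}) together with the fact that flat extensions preserve kernels, $p$-torsion, and the corresponding quotients, to glue explicit quasi-coherent models and then match them with $\sP_n(\sJ)$, $\sPb_n(\sJ)$, $\sT_n(\sJ)$ at the level of stalks. The paper's proof proceeds differently for each of the three sheaves. For $\sP_n(\sJ)$ it invokes the identification $\sP(\sJ)/p^n\sP(\sJ) \simeq \sP(\sJ_n)$ (compatibility of the divided powers with those of $p$) and then cites the standard fact that the PD-envelope of a quasi-coherent ideal on the honest scheme $P_n$ is quasi-coherent. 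For $\sPb_n(\sJ)$ it uses a neat algebraic trick: the isomorphism $\sPb(\sJ) \simeq \varinjlim_i p^i\sP(\sJ)$ (transition maps given by multiplication by $p$) yields $\sPb_n(\sJ) \simeq \varinjlim_i p^i\sP_{n+i}(\sJ)$, a filtered colimit of quasi-coherent $\sO_{P_n}$-modules. Only for $\sT_n(\sJ)$ does the paper argue exactly as you do, via flat base change. Your route has the virtue of being uniform and avoids the auxiliary isomorphism $\sP_n(\sJ)\simeq\sP(\sJ_n)$; the paper's route for $\sPb_n(\sJ)$ has the advantage of giving an explicit presentation of $\sPb_n(\sJ)$ as a colimit of quasi-coherent submodules of the $\sP_{n+i}(\sJ)$, which is occasionally useful on its own.
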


\begin{proof}
As the divided powers of $\sP(\sJ)$ are compatible with the 
divided powers of $p$, there is a canonical isomorphism
$$ \sP_n(\sJ) = \sP(\sJ)/p^n\sP(\sJ) \xrightarrow{\ \sim\ \,} 
\sP(\sJ_n),$$ 
where $\sJ_n = \sJ\sO_{P_n}$ \cite[3.20-8)]{BO}. The ideal
$\sJ_n$ is quasi-coherent over $\sO_{P_n}$, and therefore
the $\sO_{P_n}$-algebra $\sP(\sJ_n)$ is also quasi-coherent
\cite[3.30]{BO}, which proves the first claim.

Since $\sT(\sJ) = \cup_i \sT_i(\sJ)$, the commutative diagrams
\eqn{ \xymatrix{ 
0 \ar[r] & \sT_i(\sJ) \ar[r] \ar@{^{(}->}[d] & \sP(\sJ) 
\ar[r]^-{p^i} \ar@{=}[d] & p^i\sP(\sJ) \ar[r] \ar[d]^-{p} & 0 \\
0 \ar[r] & \sT_{i+1}(\sJ) \ar[r] & \sP(\sJ) \ar[r]^-{p^{i+1}} 
& p^{i+1}\sP(\sJ) \ar[r] & 0 
} }
provide an isomorphism
$$ \varinjlim_i p^i\sP(\sJ) \xrightarrow{\ \sim\ \,} 
\sPb(\sJ),$$
in which the transition maps of the direct system are defined by
multiplication by $p$. Therefore, we obtain an isomorphism of 
$\sO_{P_n}$-modules
\begin{eqnarray*}
\sPb_n(\sJ)\ =\ \sPb(\sJ)/p^n\sPb(\sJ) 
& \simeq & \varinjlim_i (p^i\sP(\sJ)/p^{n+i}\sP(\sJ))\\
& \simeq & \varinjlim_i (p^i\sP_{n+i}(\sJ)).
\end{eqnarray*}
As the $\sO_{P_{n+i}}$-algebra $\sP_{n+i}(\sJ)$ is quasi-coherent,
$p^i\sP_{n+i}(\sJ)$ is a quasi-coherent $\sO_{P_n}$-module. Taking
the direct limit of these for variable $i$, it follows that
$\sPb_n(\sJ)$ is quasi-coherent over $\sO_{P_n}$.

Let $\V \subset \U$ be two affine open subsets of $\P$. Using the 
flatness of $\Gamma(\V,\sO_{\P})$ over $\Gamma(\U,\sO_{\P})$, we 
deduce from \eqref{sectPDEnv} that the canonical morphisms
\eqa{ssQcohP}{\Gamma(\V,\sO_{\P}) \otimes_{\Gamma(\U,\sO_{\P})} 
\Gamma(\U,\sP(\sJ)) \to \Gamma(\V,\sP(\sJ)),\\
\Gamma(\V,\sO_{\P}) \otimes_{\Gamma(\U,\sO_{\P})} 
\Gamma(\U,\sT_n(\sJ)) \to \Gamma(\V,\sT_n(\sJ)) \notag}
are isomorphisms. Therefore, $\sT_n(\sJ)$ is quasi-coherent over 
$\sO_{P_n}$.
\end{proof}

\subsection{}\label{ssLimitEnv}

We now fix a notation. Let $A$ be a commutative ring, $\fa \subset A$ an ideal that 
contains $p$, and $m$ a positive integer. For any ideal $I \subset A$, we will denote
by $I^{(m)}_{\fa}$ the ideal of $A$ generated by $\fa$ and the $p^m$-th powers of
elements of $I$, or, equivalently, by $\fa$ and the $p^m$-th powers of a family of 
generators of $I$. When $\fa = pA$, we will simply use the notation $I^{(m)}$. Note 
that 
\eq{fSymbPow}{ I^{(m)}_{\fa} = I^{(m)} + \fa. }
We extend this definition in an obvious way to sheaves of rings on topological spaces.

As in \ref{ssTubes}, let $\P$ be a smooth formal scheme, and let $X \inj P$ be a
closed subscheme of its special fibre, defined by an ideal $\sJ \subset \sO_{\P}$. We
apply the previous definition to $\sJ$, using the ideal $\fa = p\sO_{\P} \subset
\sO_{\P}$. We obtain in this way coherent ideals $\sJm \subset \sO_{\P}$.

As $\sJ^{(m+1)} \subset \sJm$ for all $m \geq 0$, the $\sO_{\P}$-algebras $\sPh(\sJm)$
and $\sPt(\sJm)$ defined in \ref{ssFlatEnv} sit in inverse systems
\gan{\cdots \to \sPh(\sJ^{(m+1)}) \to \sPh(\sJm) \to \cdots \to 
\sPh(\sJ^{(0)}), \\
\cdots \to \sPt(\sJ^{(m+1)}) \to \sPt(\sJm) \to \cdots \to 
\sPt(\sJ^{(0)}). }
The following proposition is the key to comparisons between
crystalline and rigid cohomologies in the proper singular case (see
also \cite[Section 2]{Cr04}):

\begin{prop}\label{ssCompInvSys}
With the previous notation, there exists functorial horizontal
isomorphisms
\ga{fAPinvlim}{\sA_{X,\P} \xrightarrow{\ \sim\ \,} \varprojlim_m 
(\sPh(\sJm)_K)
\xrightarrow{\ \sim\ \,} \varprojlim_m (\sPt(\sJm)_K).}
Moreover, for any coherent ideal $\sK \subset \sO_{\P}$, 
\eq{fVanRlim1}{ R^i\varprojlim_m (\sPh(\sJm + \sK)_K) = 
R^i\varprojlim_m (\sPt(\sJm + \sK)_K) = 0 }
for all $i \geq 1$.
\end{prop}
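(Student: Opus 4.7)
My overall plan is to identify, for each $m$, the algebra $\sPh(\sJm)_K$ (resp.\ $\sPt(\sJm)_K$) with the ring of rigid analytic functions on a specific affinoid subdomain of $\P_K$ whose family forms an admissible cover of the tube $]X[_\P$. Since both claims are local on $\P$, I reduce to the case $\P = \Spf A$ with $\sJ = (f_1, \ldots, f_r)$ and, for the second claim, $\sK = (g_1, \ldots, g_s)$. For each $m \geq 0$, consider the affinoid subdomain
\begin{equation*}
V_m := \{x \in \P_K : |f_i(x)|^{p^m} \leq |p|\text{ for all }i\} = \Spm A_K\langle f_1^{p^m}/p, \ldots, f_r^{p^m}/p\rangle.
\end{equation*}
By \eqref{desctube}, the $V_m$ form an increasing admissible affinoid covering of $]X[_\P$, and Kiehl's acyclicity theorem gives $\Gamma(]X[_\P, \sO) = \varprojlim_m \Gamma(V_m, \sO)$. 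This reduces the first claim to the identification $\Gamma(V_m, \sO) \simeq \Gamma(\P, \sPh(\sJm))_K \simeq \Gamma(\P, \sPt(\sJm))_K$, functorially in $m$.

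The central step will be the identification of $\Gamma(\P, \sPt(\sJm))_K$ with $A_K\langle f_i^{p^m}/p\rangle$. In $\sPt(\sJm)$, the relations $k!\,\gamma_k(f_i^{p^m}) = (f_i^{p^m})^k$ determine the divided powers unambiguously after $\otimes K$, since $p$-torsion has been killed, and one finds that $\gamma_k(f_i^{p^m})$ corresponds to $(p^k/k!)(f_i^{p^m}/p)^k$. The assignment $T_i \mapsto f_i^{p^m}/p$ therefore induces a homomorphism $A\langle T_i\rangle/(pT_i - f_i^{p^m}) \to \Gamma(\P, \sPt(\sJm))_K$ which I will extend, after $p$-adic completion and $\otimes K$, to the desired isomorphism, using the $p$-adic boundedness of the divided powers $\gamma_k(f_i^{p^m})$ on $V_m$ coming from the standard estimates on $v_p(p^k/k!)$. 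For the $\sPh$ version, the kernel of $\sP(\sJm) \surj \sPb(\sJm)$ is the $p$-power torsion submodule $\sT(\sJm) = \bigcup_n \sT_n(\sJm)$, whose inverse limit of $p^n$-quotients is annihilated after $\otimes K$, yielding $\sPh(\sJm)_K \riso \sPt(\sJm)_K$. Functoriality and horizontality (compatibility with the natural connections extending the canonical derivation of $\sO_{\P,K}$) follow from the universal property of divided power envelopes.

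For the vanishing $R^i\varprojlim_m = 0$ in the second claim, the same analysis applies with $\sJm$ replaced by $\sJm + \sK$: the sheaves $\sPh(\sJm + \sK)_K$ correspond to affinoid subdomains $V_m' \subset V_m$ obtained by intersecting $V_m$ with the inequalities coming from $\sK$, and the transition maps for varying $m$ are the restriction morphisms between nested affinoids, hence surjective on sections over any affine open of $\P$. By Lemma \ref{ssQcoh}, the reductions modulo $p^n$ of the sheaves in question are quasi-coherent $\sO_{P_n}$-modules and therefore acyclic on affine opens of $\P$. The Mittag-Leffler criterion, applied termwise on an affine cover, then yields the desired vanishing of $R^i\varprojlim$ for $i \geq 1$.

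The main obstacle will be the comparison between $\sPt(\sJm)_K$ and the affinoid algebra $A_K\langle f_i^{p^m}/p\rangle$, which is the essential bridge between the crystalline and rigid worlds: it requires a careful $p$-adic convergence analysis for divided powers, and in the absence of a regularity hypothesis on $(f_1, \ldots, f_r)$ one is led to reduce to a universal situation over $W\langle T_1, \ldots, T_r\rangle$ via flat base change, where the PD-envelope modulo $p$-torsion admits an explicit description as a divided power polynomial algebra and the comparison with the Tate algebra can be written down directly.
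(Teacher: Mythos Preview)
Your overall strategy---cover $]X[_\P$ by the closed tubes $V_m=[X]_{\P,\eta_m}$ with $\eta_m=p^{-1/p^m}$ and compare each $\Gamma(V_m,\sO)$ with a completed PD-envelope---is exactly the paper's. The gap is that you assert a \emph{termwise} isomorphism
\[
\Gamma(\P,\sPt(\sJm))_K \;\simeq\; A_K\langle f_i^{p^m}/p\rangle \;=\;\Gamma(V_m,\sO),
\]
and this is false. Take $A=W\langle t\rangle$, $\sJ=(t,p)$, so $\sJm=(t^{p^m},p)$ and the envelope is $p$-torsion free. Writing $\gamma_k=(t^{p^m})^{[k]}$ and $T=t^{p^m}/p$, one has $\gamma_k=(p^k/k!)\,T^k$ in $\sA^{(m)}:=A_K\{T\}/(t^{p^m}-pT)$, so the map $\lambda:\sPt(\sJm)_K\to\sA^{(m)}$ sends $\sum c_k\gamma_k$ (with $c_k\to 0$) to $\sum c_k(p^k/k!)T^k$. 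The inverse would have to send $T^k\mapsto (k!/p^k)\gamma_k$; since $v_p(k!/p^k)\to -\infty$, an element like $\sum_k p^{\lfloor\sqrt k\rfloor}T^k\in\sA^{(m)}$ has no preimage in $\sPt(\sJm)_K$. Thus $\lambda$ is a strict inclusion, not an isomorphism. Your proposed extension of $T_i\mapsto f_i^{p^m}/p$ to the Tate algebra fails for the same reason: the map $A[T_i]/(pT_i-f_i^{p^m})\to\sPt(\sJm)_K$ is not continuous for the $p$-adic topology on the target, so it does not pass to the completion. The estimate $v_p(p^k/k!)\ge 0$ that you invoke only gives continuity in the direction $\sPt\to\sA^{(m)}$.

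What the paper does instead is build, for each $m$, a chain
\[
\sPh(\sJm)_K \to \sPt(\sJm)_K \xra{\lambda} \sA^{(m)}_{X,\P} \to \sPh(\sJ^{(m-1)})_K,
\]
the last arrow sending $T_j\mapsto (p-1)!\,(f_j^{p^{m-1}})^{[p]}$, and checks that any three consecutive arrows compose to the transition map of the relevant inverse system. This exhibits the three systems as mutually cofinal (pro-isomorphic), which is enough to identify their inverse limits without any termwise comparison. Your claim $\sPh(\sJm)_K\simeq\sPt(\sJm)_K$ for fixed $m$ is likewise not established by your argument: the $p$-adic completion of the torsion ideal $\sT(\sJm)$ need not be $p$-torsion, and the ``flat base change to a universal situation'' you suggest is unavailable because the classifying map $W\langle T_1,\dots,T_r\rangle\to A$ is typically not flat.

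Your argument for $R^i\varprojlim_m=0$ inherits both problems. First, it rests on the termwise affinoid identification, which fails. Second, even between genuinely nested affinoids the restriction maps are not surjective---they only have dense image. The paper obtains the vanishing by combining the acyclicity of $\sPh(\sJm+\sK)_K$ on affines (from Lemma~\ref{ssQcoh} and Mittag-Leffler, as you note) with the \emph{topological} Mittag-Leffler condition (ML$'$): the image of $\Gamma(\U,\sO_\P)_K$ is $p$-adically dense in each $\Gamma(\U,\sPh(\sJm+\sK)_K)$, hence so is the image of every transition map.
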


\begin{proof}
For all $m \geq 0$, let $\eta_m = p^{-1/p^m}$. We recall that the
closed tube $[X]_{\P,\eta_m}$ is defined as the open subset
of $\P_K$ characterized by
$$[X]_{\P,\eta_m} \cap \U_K = \{x\in\U_K\ |\ \forall i, |f_i(x)| \leq 
\eta_m \} $$
for any affine open subset $\U \subset \P$ and any family of
generators $(f_i)$ of $\sJ$ over $\U$. Then $]X[_{\P}\ = \cup_m
[X]_{\P,\eta_m}$ is an admissible covering of $]X[_\P$ by an increasing family of
open subsets \cite[1.1.9]{Be96}. It follows that, for any
open subset $\U \subset \P$, the algebra of analytic functions on
$]X[_{\P} \cap \U_K$ is defined by
$$ \Gamma(]X[_{\P}\cap\U_K, \sO_{]X[_{\P}}) =
\varprojlim_m \Gamma([X]_{\P,\eta_m}\cap\U_K, \sO_{]X[_{\P}}).$$
Let $\sA^{(m)}_{X,\P} = \spm_{\ast}(\sO_{[X]_{\P,\eta_m}})$. Then we 
obtain
$$ \sA_{X,\P} \xrightarrow{\ \sim\ \,} \varprojlim_m 
\sA^{(m)}_{X,\P}.$$

We now define canonical morphisms
$$ \cdots \to \sPh(\sJm)_K \to \sPt(\sJm)_K \to \sA^{(m)}_{X,\P} 
\to \sPh(\sJ^{(m-1)})_K \to \cdots $$
such that the composition of any three consecutive morphisms is the 
transition morphism in the corresponding inverse system. The first one 
is simply defined by the canonical map $\sPh(\sJm) \to \sPt(\sJm)$. 

To define the two other ones, we work on affine open subsets and glue the 
local constructions. If $\U = \Spf A \subset \P$, then the above 
description of $[X]_{\P,\eta_m}\cap\U_K$ shows that this is an affinoid 
subset of $\U_K$. From the definition of rigid analytic functions on 
such an affinoid, we obtain 
\begin{eqnarray*}
\Gamma(\U, \sA^{(m)}_{X,\P}) & = & \Gamma([X]_{\P,\eta_m}\cap\U_K, 
\sO_{\P_K})\\
& = & A_K\{T_1,\ldots,T_r\}/(f_1^{p^m}-pT_1,\ldots,f_r^{p^m}-pT_r),
\end{eqnarray*}
where $f_1,\ldots,f_r$ are generators of $\Gamma(\U, \sJ)$,
$T_1,\ldots,T_r$ is a family of indeterminates, and
$A_K\{T_1,\ldots,T_r\} \subset A_K[[T_1,\ldots,T_r]]$ is the subring of
formal power series the coefficients of which tend to $0$. Since
$\sA^{(m)}_{X,\P}$ is a $\Q$-algebra, the morphism $\sO_{\P} \to
\sA^{(m)}_{X,\P}$ factors uniquely through a morphism 
$\sP(\sJ^{(m)})/\sT(\sJ^{(m)}) \to \sA^{(m)}_{X,\P}$, and we obtain ring 
homomorphisms 
$$ \Gamma(\U,\sP(\sJm)) \to \Gamma(\U,\sPb(\sJm)) \xrightarrow{\lambda} 
\Gamma(\U, \sA^{(m)}_{X,\P}).$$
For $i\geq 1$, the composed homomorphism maps a section $(f_j^{p^m})^{[i]}$ 
to $f_j^{p^mi}/i!= (p^i/i!)T_j^i$. On the other hand, 
\eqref{sectPDEnv} shows that $\Gamma(\U,\sP(\sJm))$ is
generated as a $\Gamma(\U,\sO_{\P})$-module by the products 
$(f_1^{p^m})^{[i_1]} \cdots (f_r^{p^m})^{[i_r]}$, and the first
homomorphism is surjective, since Lemma \ref{ssQcoh} (ii) and the
quasi-compacity of $\U$ imply that $H^1(\U,\sT(\sJ^{(m)}))=0$. As $p^i/i!
\in \Z_p$, it follows that $\lambda(\Gamma(\U,\sPb(\sJm)))$ sits
inside the image of $A\{T_1,\ldots,T_r\}$ in $\Gamma(\U,
\sA^{(m)}_{X,\P})$. This implies that $\lambda$ is a continuous
morphism when the source is endowed with the $p$-adic topology and the
target with its Tate algebra topology. Therefore $\lambda$ factorizes
uniquely through the $p$-adic completion of $\Gamma(\U,\sPb(\sJm))$.

We observe here that the natural map
$$ \Gamma(\U,\sPb(\sJm))/p^n\Gamma(\U,\sPb(\sJm)) \to \Gamma(\U, 
\sPb(\sJm)/p^n\sPb(\sJm)) $$
is an isomorphism for all $n$: since $\sPb(\sJm)$ is $p$-torsion free,
it suffices to show that $H^1(\U,\sPb(\sJm))= 0$. But
$H^1(\U,\sP(\sJm))=0$, because \eqref{sectPDEnv} implies that
$\sP(\sJm)$ is a direct limit of coherent $\sO_{\P}$-modules, and
$H^2(\U,\sT(\sJ^{(m)}))=0$, because Lemma \ref{ssQcoh} (ii) shows that
$\sT(\sJ^{(m)})$ is a direct limit of quasi-coherent $\sO_{P_n}$-modules.
So the $p$-adic completion of $\Gamma(\U,\sPb(\sJm))$ is isomorphic to
$\Gamma(\U,\sPt(\sJm))$, and the factorization of $\lambda$ provides
the morphism $\Gamma(\U,\sPt(\sJm)_K) \to \Gamma(\U,
\sA^{(m)}_{X,\P})$.

Finally, one can define a morphism 
$$A[T_1,\ldots,T_r]/(f_1^{p^m}-pT_1,\ldots,f_r^{p^m}-pT_r) \to 
\Gamma(\U,\sP(\sJ^{(m-1)}))$$
by sending $T_j$ to $(p-1)!(f_j^{p^{m-1}})^{[p]}$. Factorizing through
the $p$-adic completions and tensorizing with $K$ provides the 
morphism $\Gamma(\U,\sA^{(m)}_{X,\P})$ $\to \Gamma(\U,
\sPh(\sJ^{(m-1)})_K)$, which is also unique.

Note that all these morphisms are horizontal, since they are
continuous and induce the identity map on $\sO_{\P,K}$, which is dense
in every algebra under consideration.

Applying the functor $\varprojlim_m$, the previous morphisms of 
inverse systems of sheaves yield isomorphisms
\eqn{ \varprojlim_m (\sPh(\sJm)_K) \simeq 
\varprojlim_m (\sPt(\sJm)_K) \simeq 
\varprojlim_m \sA^{(m)}_{X,\P} \simeq \sA_{X,\P}. }

To prove the second part of the proposition, it suffices to show that 
$R^i\varprojlim_m (\sPh(\sJm + \sK)_K) = 0$ for $i \geq 1$. Observe first that, for 
any affine open subset $\U \subset \P$,  
\eq{fVanishJm}{ H^j(\U, \sPh(\sJm + \sK)) = 0 }
for $j \geq 1$, thanks to Lemma \ref{ssQcoh} and to the algebraic Mittag-Leffler 
criterion. As $\U$ is noetherian, taking sections on $\U$ commutes with tensorisation 
with $K$, and it follows that 
\eq{fVanishJmK}{ H^j(\U, \sPh(\sJm + \sK)_K) = 0  }
for $j \geq 1$. On the other hand, $\Gamma(\U, \sP(\sJ^{(m)} + \sK))_K = \Gamma(\U,
\sO_{\P})_K$ for any $m \geq 0$, so that the image of $\Gamma(\U,\sO_{\P})_K$ in
$(\Gamma(\U, \sP(\sJ^{(m)} + \sK))^{\widehat{\ }}\,)_K$ is dense for the $p$-adic topology.
Lemma \ref{ssQcoh} inmplies that $\Gamma(\U, \sP(\sJ^{(m)} + \sK))^{\widehat{\ }} =
\Gamma(\U, \sPh(\sJ^{(m)} + \sK))$, hence it follows that, for $m' \geq m$, the image
of $\Gamma(\U,\sPh(\sJ^{(m')} + \sK)_K)$ is dense in $\Gamma(\U,\sPh(\sJ^{(m)} +
\sK)_K)$. Therefore, the inverse system of topological vector spaces
$(\Gamma(\U,\sPh(\sJ^{(m)} + \sK)_K))_{m\geq 0}$ satifies the topological
Mittag-Leffler condition (ML$'$) \cite[$0_{\mathrm{III}}$, 13.2.4 (i)]{EGA}. It is easy
to see that, together with \eqref{fVanishJmK}, this property implies that the inverse
system $(\sPh(\sJm + \sK)_K)_{m \geq 0}$ is $\varprojlim$-acyclic.
\end{proof}

\begin{cor}\label{ssCompRigdR}
Let $X$ be a proper $k$-scheme, and let $X\inj\P$ be a closed immersion into a smooth formal
scheme over $W$. There exists canonical isomorphisms
\ml{fRigtoAPLoc}{\RR\uGamma\rig(X/K)  \xrightarrow{\ \sim\ \,} 
\varprojlim_m (\sPh(\sJm)_K) \otimes_{\sO_{\P}}\Omd{\P} \\
 \xrightarrow{\ \sim\ \,} 
\varprojlim_m (\sPt(\sJm)_K) \otimes_{\sO_{\P}}\Omd{\P},} 
\ml{fRigtoAP}{\RR\Gamma\rig(X/K)  \xrightarrow{\ \sim\ \,}  \RR\Gamma(X, 
\varprojlim_m (\sPh(\sJm)_K) \otimes_{\sO_{\P}}\Omd{\P}) \\
 \xrightarrow{\ \sim\ \,}  \RR\Gamma(X, 
\varprojlim_m (\sPt(\sJm)_K) \otimes_{\sO_{\P}}\Omd{\P}),}
functorial with respect to the immersion $X\inj\P$. 
\end{cor}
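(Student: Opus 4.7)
The corollary is essentially a direct consequence of Proposition \ref{ssCompInvSys} together with the definition \eqref{defCohRigLoc} of $\RR\uGamma\rig(X/K)$. The plan is to tensor the horizontal sheaf-level isomorphisms provided by the proposition with the de Rham complex $\Omd{\P}$, and then to pass from the local to the global isomorphism by applying $\RR\Gamma(X,-)$.

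More precisely, for the first display \eqref{fRigtoAPLoc}, I start from the identification $\RR\uGamma\rig(X/K) = \sA_{X,\P} \otimes_{\sO_{\P}} \Omd{\P}$ as an object of $\Db(X,K)$. Proposition \ref{ssCompInvSys} (with $\sK = 0$) supplies horizontal isomorphisms
\eqn{\sA_{X,\P}\ \riso\ \varprojlim_m (\sPh(\sJm)_K)\ \riso\ \varprojlim_m (\sPt(\sJm)_K)}
as sheaves of $\sO_{\P,K}$-algebras with integrable connection. Since each $\Omega^j_{\P}$ is a locally free $\sO_{\P}$-module of finite rank, the functor $-\otimes_{\sO_{\P}}\Omega^j_{\P}$ commutes with the inverse limit $\varprojlim_m$ sheaf-wise; moreover the horizontality of the isomorphisms above ensures that, after tensoring with the de Rham complex and extending to all degrees, one obtains morphisms of complexes (not merely graded modules), hence quasi-isomorphisms of complexes of sheaves on $X$. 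This gives the two isomorphisms of \eqref{fRigtoAPLoc} in $\Db(X,K)$.

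The second display \eqref{fRigtoAP} is then immediate: by definition $\RR\Gamma\rig(X/K) = \RR\Gamma(X, \RR\uGamma\rig(X/K))$, so applying $\RR\Gamma(X,-)$ to \eqref{fRigtoAPLoc} produces the claimed isomorphisms in $\Db(K)$. Note that we do not need to manipulate $\RR\varprojlim_m$ at this step, since the inverse limits appearing on the right-hand side are already computed as honest sheaves on $X$, thanks to Proposition \ref{ssCompInvSys}.

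As for functoriality in the closed immersion $X \inj \P$: all the ingredients are functorial. The tube $]X[_\P$ and the algebra $\sA_{X,\P}$ are functorial in $(X,\P)$ as recalled in \ref{ssTubes}; the ideals $\sJm$, their divided power envelopes $\sPh(\sJm)$, the torsion quotients $\sPt(\sJm)$, and the comparison morphisms constructed in the proof of Proposition \ref{ssCompInvSys} are all obtained by constructions that are natural in $(X,\P)$; and tensoring with the de Rham complex is natural for morphisms of formal schemes. The only subtle point to double-check is the compatibility of the inverse-limit identifications with base change along $\P' \to \P$, but this reduces to the functoriality statement already contained in the proposition. There is no significant obstacle beyond bookkeeping once the proposition is available.
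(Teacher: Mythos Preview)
Your proposal is correct and follows the same approach as the paper, which simply says the corollary follows immediately from Proposition~\ref{ssCompInvSys} together with \eqref{defCohRigLoc} and \eqref{defCohRig}. Your remark about $-\otimes_{\sO_{\P}}\Omega^j_{\P}$ commuting with $\varprojlim_m$ is harmless but not strictly needed, since in the statement the tensor product already sits outside the limit; it suffices to tensor the sheaf-level isomorphisms of Proposition~\ref{ssCompInvSys} directly with $\Omd{\P}$.
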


\begin{proof}
Thanks to the previous proposition, this follows immediately from
\eqref{defCohRigLoc} and \eqref{defCohRig}.
\end{proof}

We now observe that Proposition \ref{ssCompInvSys} allows to write the 
rigid cohomology of a proper scheme as the inverse limit of the crystalline 
cohomologies of its infinitesimal neighbourhoods in a smooth scheme (tensorized by $K$).

\begin{thm}\label{ssCompRigCrys}
Let $Y$ be a smooth $k$-scheme, and let $X \inj Y$ be a closed subscheme that is proper over 
$k$. Let $\sI \subset \sO_Y$ be the ideal of $X$ in $Y$, and let $X^{(m)} \subset Y$ be the 
closed subscheme defined by $\sI^{(m)}$ \(as defined in \nref{ssLimitEnv}\). There exists 
functorial isomorphisms 
\eqa{fCompRigCrysLoc}{ \RR\uGamma\rig(X/K) & \riso & \RR\varprojlim_m 
((\RR u_{X^{(m)}/W*}\,\sO_{X^{(m)}/W})_K), \\
\RR\Gamma\rig(X/K) & \riso & \RR\varprojlim_m 
(\RR \Gamma\cris(X^{(m)}/W, \sO_{X^{(m)}/W})_K) \label{fCompRigCrys} }
in $\Db(X,K)$ and $\Db(K)$ respectively. 
\end{thm}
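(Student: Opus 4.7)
The strategy is to combine the crystalline--de Rham comparison \eqref{fCrisdRLoc} applied to each infinitesimal neighbourhood $X^{(m)}$ with the description of $\sA_{X,\P}$ as an inverse limit of completed divided-power envelopes given in Proposition \ref{ssCompInvSys}.

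First I would reduce to the case where $Y$ admits a smooth formal lifting $\Y$ over $W$. Cover $Y$ by affine opens $U_\alpha$ on which smooth formal liftings $\Y_\alpha$ of $U_\alpha$ exist. Both sides of \eqref{fCompRigCrysLoc} admit functorial \v{C}ech-type descriptions associated with this covering: the left-hand side via the definition of $\RR\uGamma\rig$ recalled at the end of \ref{ssTubes}, and the right-hand side via the \v{C}ech analogues \eqref{fCrisdRLocn2}--\eqref{fCrisdRLoc2} of the crystalline direct images for the diagonal embeddings $X^{(m)} \cap U_\ualpha \inj \Y_\ualpha := \Y_{\alpha_0} \times_W \cdots \times_W \Y_{\alpha_i}$. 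Granted local $\varprojlim$-acyclicity (see below), $\RR\varprojlim_m$ commutes with totalization of the resulting \v{C}ech bicomplexes, so it suffices to treat the case in which $Y$ itself lifts to a smooth formal $W$-scheme $\Y$.

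In that case, let $\sJ \subset \sO_\Y$ be the ideal of $X$ viewed as a closed subscheme of the special fibre of $\Y$. Then $p\sO_\Y \subset \sJ$ and $\sJ/p\sO_\Y = \sI$, so the ideal $\sJ^{(m)} \subset \sO_\Y$ defined in \ref{ssLimitEnv} (with $\fa = p\sO_\Y$) reduces modulo $p$ to $\sI^{(m)}$; equivalently, $X^{(m)} \inj \Y$ is the closed immersion with ideal $\sJ^{(m)}$. The local crystalline--de Rham comparison \eqref{fCrisdRLoc}, applied to each embedding $X^{(m)} \inj \Y$, yields a functorial family of isomorphisms
\eqn{ \RR u_{X^{(m)}/W\,*}\,\sO_{X^{(m)}/W}\ \riso\ \sPh(\sJ^{(m)}) \otimes_{\sO_\Y} \Omd{\Y/W}, }
hence, after tensoring with $K$, compatible isomorphisms $(\RR u_{X^{(m)}/W\,*}\sO_{X^{(m)}/W})_K \simeq \sPh(\sJ^{(m)})_K \otimes_{\sO_\Y} \Omd{\Y/W}$.

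I then apply $\RR\varprojlim_m$. By the second part of Proposition \ref{ssCompInvSys} with $\sK = 0$, the inverse system $(\sPh(\sJ^{(m)})_K)_m$ is $\varprojlim$-acyclic, and since $\Omd{\Y/W}$ is a bounded complex of locally free $\sO_\Y$-modules of finite rank, $\RR\varprojlim_m$ commutes with $-\otimes_{\sO_\Y} \Omd{\Y/W}$. The first-part isomorphism \eqref{fAPinvlim} then identifies $\varprojlim_m \sPh(\sJ^{(m)})_K$ with $\sA_{X,\Y}$, and \eqref{defCohRigLoc} identifies $\sA_{X,\Y} \otimes_{\sO_\Y} \Omd{\Y/W}$ with $\RR\uGamma\rig(X/K)$, giving \eqref{fCompRigCrysLoc}. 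The global isomorphism \eqref{fCompRigCrys} then follows by applying $\RR\Gamma(X,-)$ and using that $\RR\Gamma$ commutes with $\RR\varprojlim$. The main technical point is the bookkeeping in the \v{C}ech gluing step, where the required local $\varprojlim$-acyclicity on each product $\Y_\ualpha$ is obtained by applying Proposition \ref{ssCompInvSys} to $\Y_\ualpha$ with $\sJ$ replaced by the ideal of the diagonal embedding $X \cap U_\ualpha \inj \Y_\ualpha$.
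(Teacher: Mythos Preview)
Your approach coincides with the paper's in its core step: when $Y$ lifts to a smooth formal $W$-scheme $\Y$, the ideal of $X^{(m)}$ in $\Y$ is $\sJ^{(m)}$, the comparison \eqref{fCrisdRLoc} gives $\RR u_{X^{(m)}/W*}\sO_{X^{(m)}/W}\simeq\sPh(\sJ^{(m)})\otimes\Omd{\Y}$, and Proposition~\ref{ssCompInvSys} with $\sK=0$ identifies $\RR\varprojlim_m$ of these with $\sA_{X,\Y}\otimes\Omd{\Y}=\RR\uGamma\rig(X/K)$. That part is correct.

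The gap is in your \v{C}ech reduction. For a multi-index $\ualpha$ of length $>1$, the product $\Y_\ualpha$ does \emph{not} lift $U_\ualpha$: the diagonal embedding $U_\ualpha\inj\Y_\ualpha$ has a nontrivial ideal $\sK_\ualpha$ even modulo $p$. Hence the ideal of $X^{(m)}\cap U_\ualpha$ in $\Y_\ualpha$ is $\sJ_\ualpha^{(m)}+\sK_\ualpha$, not $\sJ_\ualpha^{(m)}$, and the crystalline \v{C}ech term is $\sPh(\sJ_\ualpha^{(m)}+\sK_\ualpha)\otimes\Omd{\Y_\ualpha}$. So on each product you still need the comparison
\eqn{\varprojlim_m\sPh(\sJ_\ualpha^{(m)})_K\otimes\Omd{\Y_\ualpha}\ \lra\ \varprojlim_m\sPh(\sJ_\ualpha^{(m)}+\sK_\ualpha)_K\otimes\Omd{\Y_\ualpha}}
to be a quasi-isomorphism. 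This is exactly the morphism \eqref{fJmtoJmK} in the paper, and your ``lifting case'' does not cover it; the $\varprojlim$-acyclicity you invoke (which is indeed \eqref{fVanRlim1}) only tells you both sides are computed by honest inverse limits, not that the map between them is a quasi-isomorphism. The paper closes this gap by first proving the statement for an arbitrary closed immersion $Y\inj\P$ into a smooth formal scheme---establishing \eqref{fJmtoJmK} by localizing and reducing to $\P$ a smooth lifting of $Y$, where source and target coincide---and only afterwards running the \v{C}ech argument, so that each product embedding $U_\ualpha\inj\Y_\ualpha$ is already an instance of the case just treated.
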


\begin{proof}
For $m \leq m'$, let $i^{(m',m)}\cris : (X^{(m)}/W)\cris \to (X^{(m')}/W)\cris$ denote
the morphism of topos induced by the inclusion $X^{(m)} \inj X^{(m')}$. To define the
right hand side of \eqref{fCompRigCrysLoc} and \eqref{fCompRigCrys}, one can work with
the category of inverse systems $(E^{(m)}, \rho^{(m)})$, where $E^{(m)}$ is a sheaf on
the site $\Cris(X^{(m)}/W)$ and $\rho^{(m)}$ is a morphism of sheaves
$(i^{(m,m-1)}\cris)^{-1}E^{(m)} \to E^{(m-1)}$. These systems form a topos
$(X^{(\sbul)}/W)\cris$, and the family of structural sheaves $\sO_{X^{(m)}/W}$,
together with the functoriality morphisms, is an object $\sO_{X^{(\sbul)}/\W}$ in this
topos. Using the fact that $|X^{(m)}| = |X|$ for all $m$, the family of functors
$u_{X^{(m)}/W*}$ defines a functor $u_{X^{(\sbul)}/W*}$ from $(X^{(\sbul)}/W)\cris$ to
the topos $X^{(\sbul)}\Zar$ of inverse systems of sheaves on $X$. We denote by
$\RR\varprojlim_m ((\RR u_{X^{(m)}/W*}\,\sO_{X^{(m)}/W})_K)$ the complex obtained by
applying to $\sO_{X^{(\sbul)}/\W}$ the right derived functor of $u_{X^{(\sbul)}/W*}$,
tensorizing with $K$, and applying the $\RR\varprojlim_m$ functor. The complex
$\RR\varprojlim_m (\RR \Gamma\cris(X^{(m)}/W, \sO_{X^{(m)}/W})_K)$ is defined
similarly.

Let us assume first that $Y$ can be embedded in a smooth formal scheme $\P$.
Let $\sJ$ be the ideal of $X$ in $\P$, and let $\sK$ be the ideal of $Y$ in $\P$. Then
$X^{(m)}$ is the closed subscheme of $\P$ defined by the ideal $\sJ^{(m)} + \sK$.
Thanks to the functoriality properties of the linearization functor used in the
construction of the comparison isomorphism between crystalline and de Rham cohomologies
\cite[6.9]{BO}, the family of linearizations of $\Omd{\P}$ on the sites
$\Cris(X^{(m)}/W)$ provides a resolution of $\sO_{X^{(\sbul)}/\W}$ that is acyclic for
the functor $u_{X^{(\sbul)}/W*}$. This defines an isomorphism
\eqn{ \RR u_{X^{(\sbul)}/W*}(\sO_{X^{(\sbul)}/\W}) \riso \sPh(\sJ^{(\sbul)}+\sK) 
\otimes \Omd{\P} }
in the derived categoy of inverse systems of $W$-modules on $X$, which has the
isomorphism \eqref{fCrisdRLoc} as component of index $m$ for each $m$. Tensorizing 
with $K$, applying $\RR\varprojlim_m$, and taking \eqref{fVanRlim1} into account, one
gets an isomorphism
\eq{fInvSysCristodR}{ \RR\varprojlim_m ((\RR u_{X^{(m)}/W*}\,\sO_{X^{(m)}/W})_K) 
\riso \varprojlim_m (\sPh(\sJ^{(m)}+\sK)_K) \otimes \Omd{\P}. }
As 
\eqn{ \RR\uGamma\rig(X/K) \riso \varprojlim_m (\sPh(\sJ^{(m)})_K) \otimes \Omd{\P} }
by \eqref{fRigtoAPLoc}, it suffices to prove that the functoriality morphism 
\eq{fJmtoJmK}{ \varprojlim_m (\sPh(\sJ^{(m)})_K) \otimes \Omd{\P} \lra 
\varprojlim_m (\sPh(\sJ^{(m)}+\sK)_K) \otimes \Omd{\P} }
is a quasi-isomorphism.

This is a local statement on $X$, so we may assume that $\P$, $Y$, and $X$ are affine.
It is independent of the choice of the embedding $\P$ of $Y$, because it is functorial
with respect to $\P$, so the morphisms defined by any two embeddings $\P$, $\P'$ can be
compared via the diagonal embedding into $\P \times_W\P'$. But the left
hand side of \eqref{fJmtoJmK} is independent of $\P$ up to quasi-isomorphism because
it computes canonically $\RR\uGamma\rig(X/K)$, and so does the right hand side because,
for each $m$, it computes canonically $(\RR u_{X^{(m)}/W*}\sO_{X^{(m)}/W})_K$ and it
has $\varprojlim$-acyclic terms. This allows to reduce to the case where $\P$ is a 
smooth lifting of $Y$. Then the source and target of \eqref{fJmtoJmK} are identical, 
and the morphism is the identity.

This defines the isomorphism \eqref{fCompRigCrysLoc} when $Y$ can be embedded into a 
smooth formal scheme over $W$. In the general case, 
one chooses an affine open covering $\fU = (U_\alpha)$ of $Y$ and closed immersions 
$U_{\alpha} \inj \P_{\alpha}$ into smooth formal schemes over $W$. Then one
can use for each $X^{(m)}$ the \v{C}ech resolution introduced in \ref{ssCohCris} to
compute $\RR\varprojlim_m ((\RR u_{X^{(m)}/W*}\,\sO_{X^{(m)}/W})_K)$ and the \v{C}ech
resolution introduced in \ref{ssTubes} to define $\RR\uGamma\rig(X/K)$. The
functoriality of the previous construction provides a morphism between these \v{C}ech
resolutions, and this morphism is a quasi-isomorphism because the corresponding
morphism for each intersection $U_{\ualpha}$ of the covering is a quasi-isomorphism.

The isomorphism \eqref{fCompRigCrys} follows from \eqref{fCompRigCrysLoc} by taking 
global sections and using the commutations between $\RR\varprojlim$ and $\RR\Gamma(X, 
-)$, and between $\RR\Gamma(X,-)$ and tensorisation by $K$.
\end{proof}

\begin{rmk}\label{ssCompRigCrys1}
As $Y$ is noetherian, the ind-scheme $(X^{(m)})_{m \geq 0}$ is isomorphic to the
ind-scheme that consists in all infinitesimal neighbourhoods $X_n$ of $X$ in $Y$. 
Therefore, one can rewrite isomorphisms \eqref{fCompRigCrysLoc} and
\eqref{fCompRigCrys} as
\eqa{fCompRigCrysLoc1}{ \RR\uGamma\rig(X/K) & \riso & \RR\varprojlim_n 
((\RR u_{X_n/W*}\,\sO_{X_n/W})_K), \\
\RR\Gamma\rig(X/K) & \riso & \RR\varprojlim_n 
(\RR \Gamma\cris(X_n/W, \sO_{X_n/W})_K). \label{fCompRigCrys1} }
\end{rmk}

\medskip
\section{The comparison theorem}\label{sProofMain}
\smallskip

Given a proper $k$-scheme $X$ enbedded as a closed subscheme in a smooth $k$-scheme
$Y$, we construct now a de Rham-Witt complex on $Y$ with coefficients supported in
$X$, and we prove that its cohomology is canonically isomorphic to the rigid cohomology
of $X$. This de Rham Witt complex can easily be defined using divided power envelopes
for the ideals of sections of $W_n\sO_Y$ that vanish on the infinitesimal
neighbourhoods $X^{(m)}$ of $X$ in $Y$. However, because of the flatness over $W_n$
required in Theorem \ref{ssCrysnEtodRWE}, we need to use also an alternate definition
based on $p$-torsion free quotients of PD-envelopes, analogous to the construction 
introduced in the previous section.

More generally, we prove a similar comparison theorem for the rigid cohomology
with compact supports of a separated $k$-scheme of finite type.

\subsection{}\label{ssAXYdRW} 
Let $Y$ be a smooth $k$-scheme of finite type, and let $i : X \inj Y$ be the inclusion
of a closed subscheme defined by an ideal $\sI_Y \subset \sO_Y$. For each PD-thickening
$(U,T,\delta)$ of an open subset $U$ of $Y$, we will denote by $\sK_T$ the ideal of $U$
in $T$ and by $\sI_T$ the ideal of $X \cap U$ in $T$.

For each $m \geq 0$, let $i^{(m)} : X^{(m)} \inj Y$ be the inclusion of the closed
subscheme defined by $\sI_Y^{(m)}$, where $\sI_Y^{(m)}$ is defined as in \ref{ssLimitEnv}.
The ideal of $X^{(m)}$ in $T$ is then equal to
\eq{fImTK}{ (\sI_T)^{(m)}_{\sK_T} := (f^{p^m})_{f \in \sI_T} + \sK_T. }

We will work with the crystalline direct images $i^{(m)}\cris{}_{\,*}\sO_{X^{(m)}/W}$.
As recalled in \ref{ssImDirCris}, these are crystals on $Y/W$, which can be described as
follows: for each $(U,T,\delta)$ as above, the evaluation on $T$ of
$i^{(m)}\cris{}_{\,*}\sO_{X^{(m)}/W}$ is given by
\eq{fPmXY}{ (i^{(m)}\cris{}_{\,*}\sO_{X^{(m)}/W})_T =
\sP_\delta((\sI_T)^{(m)}_{\sK_T}), }
where $\sP_\delta$ denotes the divided power envelope with compatiblity with the
divided powers $\delta$ on $\sK_T$. When $(U,T,\delta)$ varies, the transition
maps are defined by the functoriality of divided power envelopes.

To simplify the notation, we set $\sP^{(m)}_{X,Y} = i^{(m)}\cris{}_{\,*}\sO_{X^{(m)}/W}$. 
For $m \geq 1$, the inclusions $X^{(m-1)} \inj X^{(m)}$ define functoriality morphisms
\eqn{ \rho^{(m)} : \sP^{(m)}_{X,Y} \lra \sP^{(m-1)}_{X,Y}, }
which turn the family $(\sP^{(m)}_{X,Y})_{m\geq 0}$ into an inverse system of
crystals. As in \ref{ssCrys}, we can take the evaluations of these crystals on the
thickenings $Y \inj W_nY$, and let $n, m$ vary. In this way, we obtain an inverse
system of $W_n\sO_Y$-modules $(\sP^{(m)\,W}_{X,Y,n})^{\ }_{m,n}$ and an inverse system
of complexes $(\sP^{(m)\,W}_{X,Y,n} \otimes_{W_n\sO_Y} \WOmd{n}{Y})^{\ }_{m,n}$. We
define
\gan{ \sPh^{(m)\,W}_{X,Y} := \varprojlim_n  \sP^{(m)\,W}_{X,Y,n}, \quad\quad 
\sA^W_{X,Y} := \varprojlim_m \,(\sPh^{(m)\,W}_{X,Y} \otimes K), \\
\sPh^{(m)\,W}_{X,Y} \otimesh_{W\sO_Y} \WOmd{}{Y} := 
\varprojlim_n (\sP^{(m)\,W}_{X,Y,n} \otimes_{W_n\sO_Y} \WOmd{n}{Y}), \\
\sA^W_{X,Y} \otimesh_{W\sO_Y} \WOmd{}{Y} := \varprojlim_m 
\,((\sPh^{(m)\,W}_{X,Y} \otimesh_{W\sO_Y} \WOmd{}{Y}) \otimes K).
}
As $\WOm{n}{0}{Y} = W_n\sO_Y$, we get in degree 0 
\gan{ \sPh^{(m)\,W}_{X,Y} \otimesh_{W\sO_Y} \WOm{}{0}{Y} = 
\varprojlim_n (\sP^{(m)\,W}_{X,Y,n} \otimes_{W_n\sO_Y} \WOm{n}{0}{Y}) 
\simeq \sPh^{(m)\,W}_{X,Y}, \\
\sA^W_{X,Y} \otimesh_{W\sO_Y} \WOm{}{0}{Y} = \varprojlim_m 
\,((\sPh^{(m)\,W}_{X,Y} \otimesh_{W\sO_Y} \WOm{}{0}{Y}) \otimes K)
\simeq \sA^W_{X,Y}. }
We will denote the transition morphisms by 
\gan{ \rhoh^{(m)} : \sPh^{(m)\,W}_{X,Y} \to \sPh^{(m-1)\,W}_{X,Y}, \\
\rhoh^{(m)\sbul} : \sPh^{(m)\,W}_{X,Y} \otimesh_{W\sO_Y} \WOmd{}{Y} \to 
\sPh^{(m-1)\,W}_{X,Y} \otimesh_{W\sO_Y} \WOmd{}{Y}. }

Note that, since the crystals $\sP^{(m)}_{X,Y}$ are supported in $X$, so are the
complexes $\sPh^{(m)\,W}_{X,Y} \otimesh_{W\sO_Y} \WOmd{}{Y}$ and $\sA^W_{X,Y}
\otimesh_{W\sO_Y} \WOmd{}{Y}$. We can now state the main result of this article.

\begin{thm}\label{ssMain}
Let $X$ be a $k$-scheme of finite type, and let $i : X \inj Y$ be a closed immersion
into a smooth $k$-scheme.

\romain If there exists a closed immersion $Y \inj \P$ of $Y$ into a smooth 
formal $W$-scheme $\P$ endowed with a lifting $F$ of the absolute Frobenius endomorphism, 
these data define a quasi-isomorphism of complexes of sheaves of $K$-vector spaces
supported in $X$
\eq{fMainF}{ \sA_{X,\P} \otimes_{\sO_{\P}} \Omd{\P} \lra 
\sA^W_{X,Y} \otimesh_{W\sO_Y} \WOmd{}{Y}, }
functorial with respect to $(X,Y,\P,F)$. Via the identifications of \cite[1.5]{Be97a},
its image in $\Db(X, K)$ is independent of the choice of the embedding of $Y$ into
$(\P,F)$.

\romain Without extra assumption, there exists in $\Db(X, K)$ a canonical isomorphism 
\eq{fMainLoc}{ \RR\uGamma\rig(X/K) \riso \sA^W_{X,Y} \otimesh_{W\sO_Y} \WOmd{}{Y}, }
functorial with respect to the couple $(X,Y)$ and defined by \eqref{fMainF} 
whenever the assumptions of {\normalfont{(i)}} are satisfied. 

\romain If $X$ is proper, there exists in $\Db(K)$ a canonical isomorphism
\eq{fMainGlob}{ \RR\Gamma\rig(X/K) \riso 
\RR\Gamma(X, \sA^W_{X,Y} \otimesh_{W\sO_Y} \WOmd{}{Y}), }
functorial with respect to the couple $(X, Y)$. 
\end{thm}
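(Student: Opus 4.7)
The plan is to establish the isomorphism \eqref{fMainLoc} by combining Theorem \ref{ssCompRigCrys}, which identifies $\RR\uGamma\rig(X/K)$ with the derived inverse limit of the crystalline cohomologies of the infinitesimal neighbourhoods $X^{(m)}$ of $X$ in $Y$, with Corollary \ref{ssCrysEtodRWE}, which computes each such crystalline cohomology via a de Rham-Witt complex with coefficients. The subtlety is that the direct image crystal $\sP^{(m)}_{X,Y} = i^{(m)}\cris{}_{\,*}\,\sO_{X^{(m)}/W}$, whose evaluations are divided power envelopes, typically carries $p$-torsion and is not $W$-flat, so Corollary \ref{ssCrysEtodRWE} does not apply to it directly. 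To circumvent this, I would introduce for every $m$ the quotient crystal $\sPb^{(m)}_{X,Y}$ obtained by killing $p$-torsion in the evaluations of $\sP^{(m)}_{X,Y}$ on smooth liftings. Using flatness of transition maps between such liftings, I would verify that this defines an inverse system of quasi-coherent crystals on $Y/W$, each flat over $W$ in the sense of Definition \ref{ssFlatCryst}, and that the canonical surjection $\sP^{(m)}_{X,Y} \surj \sPb^{(m)}_{X,Y}$ becomes an isomorphism after $\otimes K$.

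Corollary \ref{ssCrysEtodRWE}(ii) then yields for each $m$ a canonical isomorphism $\RR u_{Y/W *}\,\sPb^{(m)}_{X,Y} \riso \sPt^{(m)\,W}_{X,Y} \otimesh_{W\sO_Y} \WOmd{}{Y}$ in $\Db(Y, W)$, where $\sPt^{(m)\,W}_{X,Y}$ denotes the inverse limit over $n$ of the evaluations of $\sPb^{(m)}_{X,Y}$ on the Witt thickenings $(Y, W_nY)$. Combining this with $\RR u_{X^{(m)}/W *}\,\sO_{X^{(m)}/W} \simeq \RR u_{Y/W *}\,\sP^{(m)}_{X,Y}$ (from \ref{ssImDirCris}(b)) and with $(\sP^{(m)}_{X,Y})_K \simeq (\sPb^{(m)}_{X,Y})_K$ produces, for each $m$, an isomorphism $(\RR u_{X^{(m)}/W *}\,\sO_{X^{(m)}/W})_K \simeq (\sPt^{(m)\,W}_{X,Y} \otimesh_{W\sO_Y} \WOmd{}{Y})_K$. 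Applying $\RR\varprojlim_m$ and invoking Theorem \ref{ssCompRigCrys} on the left gives $\RR\uGamma\rig(X/K) \simeq \RR\varprojlim_m\,(\sPt^{(m)\,W}_{X,Y} \otimesh_{W\sO_Y} \WOmd{}{Y})_K$. To identify the right hand side with $\sA^W_{X,Y} \otimesh_{W\sO_Y} \WOmd{}{Y}$ (defined in \ref{ssAXYdRW} via the PD-envelope system $\sPh^{(m)\,W}_{X,Y}$), I would establish the $\varprojlim$-acyclicity of the above inverse system by adapting the topological Mittag-Leffler argument from the proof of Proposition \ref{ssCompInvSys}, using the quasi-coherence and completeness properties furnished by Proposition \ref{sspCompl}, Theorem \ref{ssTorind2}, and Corollary \ref{ssFilCanEnW}. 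The equality $(\sPh^{(m)\,W}_{X,Y})_K = (\sPt^{(m)\,W}_{X,Y})_K$ then returns $\sA^W_{X,Y}$ by construction, proving (ii).

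For (i), the Frobenius lift $F$ on $\P$ produces, via the homomorphism $h_F$ of Definition \ref{ssMaptoWn}, a family of $\sigma$-semi-linear morphisms from the truncated PD-envelopes $\sP_n(\sJ^{(m)}+\sK)$ (with $\sJ,\sK$ the ideals of $X, Y$ in $\sO_\P$) to $\sPh^{(m)\,W}_{X,Y}/p^n$; tensored through the construction \eqref{dREtodRWEn} with $\Omd{\P_n}$ and assembled over $m, n$, these yield, after passage to $p$-torsion free quotients, $\otimes K$, and $\varprojlim_m$, the explicit morphism \eqref{fMainF} via Corollary \ref{ssCompRigdR} and the definition of $\sA^W_{X,Y}$. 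Its coincidence with the abstract isomorphism of (ii), and hence its being a quasi-isomorphism, follows from the compatibility between $h_F$ and $t_F$ underlying Corollary \ref{ssCrysEtodRWE}; independence from the choice of $(\P, F)$ in $\Db(X, K)$ is handled by the standard diagonal comparison argument of \ref{ssConstCase}. Finally, (iii) follows from (ii) by applying $\RR\Gamma(X, -)$, which commutes with $\RR\varprojlim$ and with $\otimes K$ on the quasi-coherent systems at hand. The main technical obstacle is the construction and verification of the auxiliary $W$-flat, quasi-coherent crystals $\sPb^{(m)}_{X,Y}$: this is the hinge that allows the Tor-independence results of section \ref{Torind} (through Theorem \ref{ssCrysnEtodRWE}) to bridge the $p$-torsion gap between the PD-envelopes computing $\RR\uGamma\rig(X/K)$ and the de Rham-Witt complex defining $\sA^W_{X,Y} \otimesh_{W\sO_Y} \WOmd{}{Y}$.
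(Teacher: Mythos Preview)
Your overall strategy matches the paper's: introduce the $p$-torsion-free quotient crystals $\sPb^{(m)}_{X,Y}$ so that Corollary \ref{ssCrysEtodRWE} applies, then compare the two inverse systems after $\otimes K$ and pass to the limit. The construction of $\sPb^{(m)}_{X,Y}$, its quasi-coherence and $W$-flatness, and the application of Corollary \ref{ssCrysEtodRWE} are all carried out in the paper exactly as you outline (see \ref{ssAltPm}).

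There is, however, a genuine gap where you pass between the $\sP^{(m)}$-system and the $\sPb^{(m)}$-system. You assert that the surjection $\sP^{(m)}_{X,Y} \surj \sPb^{(m)}_{X,Y}$ ``becomes an isomorphism after $\otimes K$'' and, correspondingly, that $(\sPh^{(m)\,W}_{X,Y})_K = (\sPt^{(m)\,W}_{X,Y})_K$ for each fixed $m$. This would follow if the $p$-torsion ideal $\sT(\sI^{(m)}) \subset \sP(\sI^{(m)})$ were bounded (annihilated by a single power of $p$); but the PD-envelope is not noetherian, the generators $f_i^{p^m}$ of $\sI^{(m)}$ need not form a regular sequence when $X$ is singular, and no such bound is available in general. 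After $p$-adic completion the torsion can therefore contribute non-trivially to $(\sPh^{(m)\,W}_{X,Y})_K$, so your termwise identification is unjustified, and with it the step $(\RR u_{Y/W*}\sP^{(m)}_{X,Y})_K \simeq (\RR u_{Y/W*}\sPb^{(m)}_{X,Y})_K$.

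The paper repairs this with a weaker, pro-level statement (Lemma \ref{ssCompInv}): for each $m$ there is an integer $s$ such that $p^s\rho^{(m)} : \sP^{(m)}_{X,Y} \to \sP^{(m-1)}_{X,Y}$ factors through $\sPb^{(m)}_{X,Y}$. This does not bound the torsion of $\sP^{(m)}_{X,Y}$ itself, only its image under the transition map; but it suffices to produce, after $\otimes K$, morphisms $\psi^{(m)\sbul}$ from the $\sPt$-system back to the $\sPh$-system with a shift in $m$, and hence an isomorphism of inverse limits $\sA^W_{X,Y}\otimesh\WOmd{}{Y} \riso \sAt^W_{X,Y}\otimesh\WOmd{}{Y}$ (Proposition \ref{ssIsoDRWt}). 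The remainder of your argument then goes through with $\sPt^{(m)\,W}_{X,Y}$ on the de Rham-Witt side, the comparison to $\sA^W_{X,Y}$ being supplied by this pro-isomorphism rather than by a termwise equality.
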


The proof of the theorem will be given in subsection \ref{ssProofMain}. It requires
some additional constructions, which we detail in the next subsections.

\subsection{}\label{ssAltPm}
We first want to show that one can perform constructions similar to those of
\ref{ssAXYdRW} using the $p$-torsion free envelopes defined in \ref{ssLimitEnv}. 

We begin with a local construction. Let $U \subset Y$ be an affine open subset, and let
$\U$ be a smooth affine formal scheme on $W$ lifting $U$; for all $n \geq 1$, let $U_n$
be the reduction of $\U$ on $W_n$. We denote by $\sI \subset \sO_\U$ the ideal defining
$X \cap U$ in $\U$, and we define $\sI^{(m)}$, $\sP(\sI^{(m)})$, $\sPh(\sI^{(m)})$,
$\sT(\sI^{(m)})$, $\sPb(\sI^{(m)})$ and $\sPt(\sI^{(m)})$ as in \ref{ssLimitEnv}. We set
\gan{ \sP_n(\sI^{(m)}) = \sP(\sI^{(m)})/p^n\sP(\sI^{(m)}) \simeq 
\sPh(\sI^{(m)})/p^n\sPh(\sI^{(m)}), \\
\sPb_n(\sI^{(m)}) = \sPb(\sI^{(m)})/p^n\sPb(\sI^{(m)}) \simeq 
\sPt(\sI^{(m)})/p^n\sPt(\sI^{(m)}). }

Let us recall how the crystal $\sP^{(m)}_{X,Y}$ corresponds to the $\sO_{\U}$-module
$\sPh(\sI^{(m)})$ in the classical equivalence between crystals on $\Cris(U/W)$ and
$p$-adically complete $\sO_{\U}$-modules endowed with an integrable and topologically
quasi-nilpotent connection. If $(V,T)$ is a thickening in $\Cris(U/W)$, there exists
locally on $T$ an integer $n\geq 1$ such that $p^n\sO_T = 0$, and a $W$-morphism $h : T
\to U_n$ extending the immersion $V \inj U_n$. Then the $\sO_T$-algebra
$h^*(\sP_n(\sI^{(m)}))$ does not depend on the choice of $n$ and $h$, up to a canonical
isomorphism defined by the HPD-stratification of $\sP_n(\sI^{(m)})$, and it can be 
identified with the Zariski sheaf $(\sP^{(m)}_{X,Y})_T$ on $T$ defined by the crystal
$\sP^{(m)}_{X,Y}$ \cite[Th.~6.6 and claim p.~6.3]{BO}.

As we already observed in \ref{ssLimitEnv}, the quotient $\sPb_n(\sI^{(m)})$ of
$\sP(\sI^{(m)})$ is endowed with the quotient connection. Since the connection of
$\sP_n(\sI^{(m)})$ is quasi-nilpotent and $\sPb_n(\sI^{(m)})$ is a quotient of
$\sP_n(\sI^{(m)})$, the connection of $\sPb_n(\sI^{(m)})$ is also quasi-nilpotent. It
follows that $\sPb_n(\sI^{(m)})$ defines by the previous construction a crystal in
$\sO_{U/W}$-algebras $\sPb^{(m)}_{X\cap U,U}$, endowed with a surjective morphism
$\pi^{(m)} : \sP_{X\cap U,U}^{(m)} \surj \sPb^{(m)}_{X\cap U,U}$.

In order to be able to glue such local constructions and to define a crystal globally
on $Y$, we need to show that the crystal $\sPb^{(m)}_{X\cap U,U}$ does
not depend, up to canonical isomorphism, on the choice of the lifting $\U$. Let
$\U'$ be a second lifting of $U$, defining a crystal
$\sPbp{m}{X\cap U,U}$ by the previous construction. As $U$ is 
affine, there exists a (non-canonical) $W$-isomorphism $u : \U \riso \U'$ inducing the
identity on $U$, hence on each $X^{(m)}$. If $\sI'$ denotes the ideal of $X$ in $\U'$,
the functoriality homomorphism $u^*\sP(\sI'{}^{(m)}) \to \sP(\sI^{(m)})$ is
then an isomorphism compatible with the connections. It induces an isomorphism
$u^*\sT(\sI'{}^{(m)}) \riso \sT(\sI^{(m)})$ between the torsion subsheaves, and we get
a horizontal isomorphism
\eqn{ \varepsilon_u : u^*\sPb(\sI'{}^{(m)}) \riso \sPb(\sI^{(m)}). }
If $(V,T)$ is a thickening in $\Cris(U/W)$, if we choose a morphism $h : T \to U_n$ as
above to define $(\sPb^{(m)}_{X\cap U,U})_T$, and if we use the
composition $h' = u_n \circ h : T \to U'_n$ to define
$(\sPbp{m}{X\cap U,U})_T$ (denoting by $u_n$ the reduction of $u$
modulo $p^n$), then the evaluation on $T$ 
\eqn{ \varepsilon_T : (\sPbp{m}{X\cap U,U})_T \riso
(\sPb{}^{(m)}_{X\cap U,U})_T }
of the wanted canonical isomorphism $\varepsilon : \sPbp{m}{X\cap
U,U} \riso \sPb^{(m)}_{X\cap U,U}$ is defined to be the pullback of
the reduction of $\varepsilon_u$ mod $p^n$ by $h$. One observes that, if one changes
the choices made for $h$, $h'$, or $u$, then the pull-back of $\varepsilon_u$ is
modified by the same isomorphisms deduced from the HPD-stratification that occur in
the definition of the Zariski sheaves $(\sPb^{(m)}_{X\cap U,U})_T$ and
$(\sPbp{m}{X\cap U,U})_T$, so that $\varepsilon_T$ finally does
not depend on any choice. One also checks that, thanks to the cocycle property of the
HPD-stratification, these isomorphisms satisfy the necessary transitivity properties. 
This allows to glue the local definitions on affine open subsets and to construct the 
crystal $\sPb^{(m)}_{X,Y}$ globally on $Y$. 

For $m \geq 1$, the natural homomorphisms $\sPb(\sI^{(m)}) \to \sPb(\sI^{(m-1)})$
defined on smooth liftings of affine open subschemes of $Y$ can also be glued so as to
define homomorphisms of crystals $\rhob^{(m)} : \sPb^{(m)}_{X,Y} \to \sPb^{(m-1)}_{X,Y}$ on
$Y$, turning the family of crystals $(\sPb^{(m)}_{X,Y})_{m \geq 0}$ into an inverse system.
On the other hand, we have by construction a surjective morphism of crystals
$\pi^{(m)} : \sP^{(m)}_{X,Y} \to \sPb^{(m)}_{X,Y}$ for each $m$, and these define a
morphism of inverse systems. The next lemma will allow us to replace, up to isogeny,
the inverse system of crystals $(\sP^{(m)}_{X,Y})_{m\geq 0}$ by the inverse system
$(\sPb^{(m)}_{X,Y})_{m \geq 0}$.

\begin{lem}\label{ssCompInv}
With the previous notation and hypotheses, let $m$ be fixed, and let $s \geq 0$ be an
integer. If $s$ is big enough, there exists a unique morphism of crystals $\sigma_s^{(m)} :
\sPb^{(m)}_{X,Y} \to \sP^{(m-1)}_{X,Y}$ such that
\eq{isog}{ \sigma_s^{(m)} \circ \pi^{(m)} = p^s\rho^{(m)}, \quad\quad 
\pi^{(m-1)} \circ \sigma_s^{(m)} = p^s\rhob^{(m)}. }
\end{lem}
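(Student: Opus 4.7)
Plan: First, I would dispose of the uniqueness and the second equation of \eqref{isog}: the surjectivity of $\pi^{(m)}$ forces $\sigma_s^{(m)}$, if it exists, to be uniquely determined by $\sigma_s^{(m)}\pi^{(m)} = p^s\rho^{(m)}$, and precomposing the second equation with $\pi^{(m)}$ reduces it to $p^s\pi^{(m-1)}\rho^{(m)} = p^s\rhob^{(m)}\pi^{(m)}$, which is the very defining property of $\rhob^{(m)}$. So only existence remains. By quasi-compactness of $Y$, the problem is local: I may assume $Y = U$ is affine and admits a smooth formal lifting $\U$ over $W$, with ideal $\sI \subset \sO_\U$ of $X\cap U$ generated by finitely many elements $f_1, \dots, f_r$, so that $\sI^{(m)} = (p, f_1^{p^m}, \dots, f_r^{p^m})$. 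Under the equivalence of \ref{ssAltPm}, $\sP^{(m)}_{X,Y}$ and $\sPb^{(m)}_{X,Y}$ correspond to $\sPh(\sI^{(m)})$ and $\sPt(\sI^{(m)})$, and the question reduces to showing that, for $s$ large enough, $p^s\rho^{(m)}$ vanishes on the kernel of $\sPh(\sI^{(m)}) \surj \sPt(\sI^{(m)})$.

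The core of the argument is an explicit divisibility computation for $\rho^{(m)}$. Inside $\sP(\sI^{(m-1)})$, the formulas $(f_i^{p^m})^{[n]} = f_i^{np^m}/n!$ and $(f_i^{p^{m-1}})^{[pn]} = f_i^{np^m}/(pn)!$ combine to give
\[
(f_i^{p^m})^{[n]} \;=\; \tfrac{(pn)!}{n!}\,(f_i^{p^{m-1}})^{[pn]} \;=\; p^n v_n\,(f_i^{p^{m-1}})^{[pn]},
\]
where $v_n := (pn)!/(n!\,p^n) \in \Z_p^\times$ by Legendre's formula, since $s_p(pn) = s_p(n)$. Consequently, $\rho^{(m)}$ carries the sub-$\sO_\U$-module generated by the monomials $\prod_i(f_i^{p^m})^{[n_i]}$ with $\sum_i n_i \geq N$ into $p^N\sPh(\sI^{(m-1)})$.

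To conclude, this divisibility must be paired with a bound on the $p$-torsion. The image $\rho^{(m)}(\sT(\sI^{(m)}))$ is automatically contained in the torsion $\sT(\sI^{(m-1)})$, since $\pi^{(m-1)}\rho^{(m)} = \rhob^{(m)}\pi^{(m)}$ and $\sPt(\sI^{(m-1)})$ is $p$-torsion-free. I would then split an arbitrary torsion element into a piece of bounded PD-degree and a piece lying in the PD-filtration $\geq N$: the latter is handled by the estimate above, while the former lies in a finitely generated $\sO_\U$-submodule whose torsion is killed by some fixed power of $p$, thanks to the quasi-coherence provided by Lemma \ref{ssQcoh} and quasi-compactness of $\U$. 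This yields a uniform $s$ on $\U$, hence on $Y$ after covering by finitely many affine opens. The canonical nature of the construction, together with the uniqueness statement, ensures that the local $\sigma_s^{(m)}$ agree on overlaps and glue to a horizontal morphism of crystals on $Y/W$. The main obstacle is precisely this uniform bound in the local step: one has to trade the unbounded $p$-torsion orders in $\sPh(\sI^{(m)})$ against the growing $p$-divisibility given by the estimate above, using the PD-filtration as a bridge.
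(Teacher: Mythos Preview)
Your reductions are all correct and match the paper: uniqueness from the surjectivity of $\pi^{(m)}$, the second equation of \eqref{isog} from the first, localization to an affine $Y=U$ with smooth lifting $\U$, and the divisibility formula $\rho^{(m)}\big((f_i^{p^m})^{[n]}\big) = \tfrac{(pn)!}{n!}\,(f_i^{p^{m-1}})^{[pn]}$ with $v_p\big((pn)!/n!\big)=n$. The gap is in the step you yourself flag as the main obstacle. When you write a torsion element $x = x_{<N} + x_{\geq N}$ according to PD-degree, neither $x_{<N}$ nor $x_{\geq N}$ is torsion in general; so knowing that the finitely generated $\sO_\U$-module spanned by the low-degree monomials has bounded $p$-torsion says nothing about $x_{<N}$. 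Your estimate only yields $\rho^{(m)}(x)\equiv\rho^{(m)}(x_{<N})\pmod{p^N}$, and there is no evident way to convert these congruences, each with its own bound depending on $N$, into a single $s$ with $p^s\rho^{(m)}(x)=0$. The image $\rho^{(m)}(\sP(\sI^{(m)}))$ is neither a finitely generated $\sO_\U$-module nor a finitely generated $\sO_\U$-algebra (already the image of $(f_i^{p^m})^{[1]}$ is $f_i^{p^m}\in\sO_\U$, so the algebra it generates is just $\sO_\U$ and does not contain the higher images), so one cannot bound its torsion directly.

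The paper closes this gap by packaging your divisibility as a factorization $\rho^{(m)}=\beta^{(m)}\circ\alpha^{(m)}$ through the auxiliary algebra
\[
\sA^{(m)}(f_1,\ldots,f_r)\;:=\;\sO_\U[T_1,\ldots,T_r]\big/(f_1^{p^m}-pT_1,\ldots,f_r^{p^m}-pT_r),
\]
where $\alpha^{(m)}$ is the universal PD-morphism (since each $f_i^{p^m}=pT_i$ lies in the PD-ideal $(p)$) and $\beta^{(m)}$ sends $T_j\mapsto(p-1)!(f_j^{p^{m-1}})^{[p]}$. The point is that $\sA^{(m)}$ is a finitely presented algebra over the coherent sheaf of rings $\sO_\U[T_1,\ldots,T_r]$, so on the affine $\U$ its ring of global sections is noetherian and its $p$-torsion ideal $\sK$ is annihilated by a single $p^s$. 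Since $\alpha^{(m)}$ carries $\sT(\sI^{(m)})$ into $\sK$, one gets $p^s\rho^{(m)}(\sT(\sI^{(m)}))=\beta^{(m)}\big(p^s\alpha^{(m)}(\sT(\sI^{(m)}))\big)=0$. In other words, the ``bridge'' you are looking for is not a decomposition of the torsion element but a noetherian intermediary through which $\rho^{(m)}$ factors; your divisibility computation is exactly what makes such a factorization possible.
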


\begin{proof}
The unicity claim follows from the fact that each $\pi^{(m)}$ is a epimorphism of
crystals. Note also that, if there exists a couple $(s, \sigma_s^{(m)})$ that satifies
the conditions of the lemma, then, for any $t \geq 0$, the couple $(s+t,
\sigma_{s+t}^{(m)} := p^t\sigma_s^{(m)})$ also satifies these conditions. As $Y$ is
quasi-compact, this allows to check the lemma locally on $Y$.

So we may assume that $Y$ is affine and has a smooth formal lifting $\Y$ on $W$. Let 
us keep the notation of \ref{ssAltPm}, only replacing $U$ and $\U$ by $Y$ and $\Y$. 
Let $(f_1,\ldots,f_r)$ be a family of generators of $\sI$, and let $\sR = 
\sO_\Y[T_1,\ldots,T_r]$. We define an $\sO_{\Y}$-algebra $\sA^{(m)}(f_1,\ldots,f_r)$
(the completion of which is an integral model of the algebra $\sA^{(m)}_{X,\Y}$ used in
the proof of Proposition \ref{ssCompInvSys}) by setting
\eqn{ \sA^{(m)}(f_1,\ldots,f_r) := 
\sR/(f_1^{p^m} - pT_1,\ldots,f_r^{p^m} - pT_r).}
The canonical homomorphism $\sO_\Y \to \sA^{(m)}(f_1,\ldots,f_r)$ maps the ideal 
$\sI^{(m)}$ to the PD-ideal $p\sA^{(m)}(f_1,\ldots,f_r)$, hence it factorizes uniquely 
through a PD-morphism
\eq{defalpham}{ \alpha^{(m)} : \sP(\sI^{(m)}) \to \sA^{(m)}(f_1,\ldots,f_r) }
On the other hand, there is a unique homomorphism of $\sO_\Y$-algebras 
\eq{defbetam}{ \beta^{(m)} : \sA^{(m)}(f_1,\ldots,f_r) \to \sP(\sI^{(m-1)}) }
sending $T_i$ to $(p-1)!(f_i^{p^{m-1}})^{[p]}$ for $i = 1,\ldots,r$. Let us also denote
by $\rho^{(m)}$ the canonical PD-morphism $\sP(\sI^{(m)}) \to \sP(\sI^{(m-1)})$. For
all $i$ and all $k \geq 0$, we have
\eqna{ \beta^{(m)} \circ \alpha^{(m)}((f_i^{p^m})^{[k]}) & 
= & \beta^{(m)}(p^{[k]} T_i^k) \\
& = & p^{[k]} ((p-1)!(f_i^{p^{m-1}})^{[p]})^k \\
& = & (p(p-1)!(f_i^{p^{m-1}})^{[p]})^{[k]} \\
& = & (\rho^{(m)}(f_i^{p^m}))^{[k]}\ =\ \rho^{(m)}((f_i^{p^m})^{[k]}). }
As the products $(f_1^{p^m})^{[k_1]} \cdots (f_r^{p^m})^{[k_r]}$ generate
$\sP(\sI^{(m)})$ as an $\sO_\Y$-module, we get that
\eq{betalpha}{ \beta^{(m)} \circ \alpha^{(m)} = \rho^{(m)}. }

Recall that, $\Y$ being noetherian and separated, the functors $H^q(\U, -)$ commute 
with direct limits for any open subset $\U \subset \Y$ and any $q \geq 0$. In 
particular, we have 
\eqn{ \Gamma(\U, \sR) =  \Gamma(\U, \sO_\Y)[T_1,\ldots,T_r] }
for any open subset $\U \subset \Y$, and, by \cite[Prop.~3.1.1]{DMA1}, 
$\sR$ is a coherent sheaf of rings on $\Y$. As 
$\sA^{(m)}(f_1,\ldots,f_r)$ is finitely presented over $\sR$, it is 
coherent too, and the same holds for the kernel $\sK_n$ of multiplication by $p^n$ on 
$\sA^{(m)}(f_1,\ldots,f_r)$ for any $n$. Therefore, $\sK_n$ is a coherent 
$\sO_{Y_n}[T_1,\ldots,T_r]$-module, which implies that it is generated by its global 
sections, and so is the subsheaf $\sK = \cup_n \sK_n$ of $p$-torsion sections of 
$\sA^{(m)}(f_1,\ldots,f_r)$. 

On the other hand, the ideal $\sN = \sum_i \sR\cdot f_i$ is the increasing union of 
the $\sO_\Y$-coherent subsheaves $\sN_d = \sum_i \sR_d\cdot f_i$, where $\sR_d \subset
\sR$ is the subsheaf of sections of total degree $\leq d$. Therefore, $\H^1(\Y, \sN) = 
0$, and $\Gamma(\Y, \sA^{(m)}(f_1,\ldots,f_r))$ is a quotient of $\Gamma(\Y, \sR)$, 
hence is a noetherian ring. So there is an integer $s \geq 0$ such that $p^s g = 0$ for
any $p$-torsion section $g \in \Gamma(\Y, \sA^{(m)}(f_1,\ldots,f_r))$. Since $\sK$ is
generated by its global sections on $\Y$, we obtain that $p^s\sK = 0$.

The homomorphism $\alpha^{(m)}$ maps the torsion ideal $\sT(\sI^{(m)}) \subset
\sP(\sI^{(m)})$ to $\sK$. Using \eqref{betalpha}, it follows that $p^s\rho^{(m)}$
vanishes on $\sT(\sI^{(m)})$, which provides a factorization of $p^s\rho^{(m)}$ through
$\sPb(\sI^{(m)})$. Reducing mod $p^n$ for all $n$ and taking the associated morphism of
crystals, we get a morphism $\sigma_s^{(m)} : \sPb^{(m)}_{X,Y} \to \sP^{(m-1)}_{X,Y}$, 
which satisfies the first relation of \eqref{isog}. As $\pi^{(m)}$ is an epimorphism,
the second one follows.
\end{proof}

\subsection{}\label{ssDefDRWt}
We can now apply the constructions of \ref{ssAXYdRW} to the crystals $\sPb^{(m)}_{X,Y}$.
Taking their evaluations on the thickenings $Y \inj W_nY$, we obtain again an inverse
system of $W_n\sO_Y$-modules $(\sPb^{(m)\,W}_{X,Y,n})^{\ }_{m,n}$ and an inverse
system of complexes $(\sPb^{(m)\,W}_{X,Y,n} \otimes_{W_n\sO_Y} \WOmd{n}{Y})^{\ }_{m,n}$. 
We now define
\gan{ \sPt^{(m)\,W}_{X,Y} := \varprojlim_n  \sPb^{(m)\,W}_{X,Y,n}, \quad\quad 
\sAt^W_{X,Y} := \varprojlim_m \,(\sPt^{(m)\,W}_{X,Y} \otimes K), \\
\sPt^{(m)\,W}_{X,Y} \otimesh_{W\sO_Y} \WOmd{}{Y} := 
\varprojlim_n (\sPb^{(m)\,W}_{X,Y,n} \otimes_{W_n\sO_Y} \WOmd{n}{Y}), \\
\sAt^W_{X,Y} \otimesh_{W\sO_Y} \WOmd{}{Y} := \varprojlim_m 
\,((\sPt^{(m)\,W}_{X,Y} \otimesh_{W\sO_Y} \WOmd{}{Y}) \otimes K). }
In degree $0$, these complexes are respectively equal to $\sPt^{(m)\,W}_{X,Y}$ and
$\sAt^W_{X,Y}$. We will denote by $\rhot^{(m)} : \sPt^{(m)\,W}_{X,Y} \to
\sPt^{(m-1)\,W}_{X,Y}$ and $\rhot^{(m)\sbul} : \sPt^{(m)\,W}_{X,Y} \otimesh_{W\sO_Y} 
\WOmd{}{Y} \to \sPt^{(m-1)\,W}_{X,Y} \otimesh_{W\sO_Y} \WOmd{}{Y}$ the transition
morphisms.

By functoriality, the morphisms of crystals $\pi^{(m)} : \sP^{(m)}_{X,Y} \to 
\sPb^{(m)}_{X,Y}$ define homomorphisms of $W\sO_Y$-algebras 
\eq{fMort}{ \pih^{(m)} : \sPh^{(m)\,W}_{X,Y} \to \sPt^{(m)\,W}_{X,Y}, \quad\quad  
\pi : \sA^W_{X,Y} \to \sAt^W_{X,Y}, }
and morphisms of complexes on $Y$ (supported in $X$)
\ga{fMorPdRWt}{ \pih^{(m)\sbul} : \sPh^{(m)\,W}_{X,Y} \otimesh_{W\sO_Y} \WOmd{}{Y} \to 
\sPt^{(m)\,W}_{X,Y} \otimesh_{W\sO_Y} \WOmd{}{Y} \\
\label{fMorAdRWt} \pi\hbul : \sA^W_{X,Y} \otimesh_{W\sO_Y} \WOmd{}{Y} \to 
\sAt^W_{X,Y} \otimesh_{W\sO_Y} \WOmd{}{Y}. 
 }

\begin{prop}\label{ssIsoDRWt}
Let $Y$ be a smooth $k$-scheme, and let $X \subset Y$ be a closed subscheme.

\romain The morphism $\pi\hbul$ defined by \eqref{fMorAdRWt} is an isomorphism.

\romain For all $m\geq 0$, $j \geq 0$ and $i \geq 1$, we have
\ga{fVanRlimP}{ 
R^i\varprojlim_n (\sPb^{(m)\,W}_{X,Y,n} \otimes_{W_n\sO_Y} \WOm{n}{j}{Y}) = 0, \\ 
\label{fVanRlimA}
R^i\varprojlim_m ((\sPt^{(m)\,W}_{X,Y} \otimesh_{W\sO_Y} \WOm{}{j}{Y}) \otimes K) = 0.}
\end{prop}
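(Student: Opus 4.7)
The plan is to derive (i) directly from Lemma~\ref{ssCompInv}, and to prove the two vanishing statements in (ii) by distinct Mittag-Leffler arguments, one algebraic and one topological. For (i), the morphisms $\sigma_s^{(m)} : \sPb^{(m)}_{X,Y} \to \sP^{(m-1)}_{X,Y}$ provided by Lemma~\ref{ssCompInv} extend functorially, through evaluation on $(Y,W_nY)$ and the de Rham-Witt construction recalled in \ref{ssMorphisms}, to morphisms of complexes $\sigmat^{(m)\sbul}$ between consecutive terms of the two inverse systems appearing in the statement. The identities \eqref{isog} then lift to $\sigmat^{(m)\sbul}\circ\pih^{(m)\sbul}=p^{s_m}\rhoh^{(m)\sbul}$ and $\pih^{(m-1)\sbul}\circ\sigmat^{(m)\sbul}=p^{s_m}\rhot^{(m)\sbul}$. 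After tensoring with $K$, the factor $p^{s_m}$ becomes invertible, so these identities exhibit $(\pih^{(m)\sbul}\otimes K)_m$ as an isomorphism in the pro-category; applying $\varprojlim_m$ produces the isomorphism $\pi\hbul$ of (i).

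For the first vanishing in (ii), I would apply the standard criterion for higher derived inverse limits of sheaves on a noetherian space: surjectivity of the transition maps together with acyclicity of each term on a basis of affine opens. The isomorphism \eqref{fTransEWn} applied to the crystal $\sPb^{(m)}_{X,Y}$ gives a surjection $\sPb^{(m)\,W}_{X,Y,n}\surj \sPb^{(m)\,W}_{X,Y,n-1}$, which tensored with the projection $\WOm{n}{j}{Y}\surj\WOm{n-1}{j}{Y}$ yields surjectivity of the transition maps of the inverse system indexed by $n$. Working locally on $Y$ via a smooth formal lifting and invoking Lemma~\ref{ssQcoh}(i), each term is quasi-coherent and hence acyclic on affine opens, which gives \eqref{fVanRlimP}.

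For the second vanishing, I would argue on an affine open $U\subset Y$. The first vanishing, combined with the quasi-coherence just established, yields $H^q(U, \sPt^{(m)\,W}_{X,Y}\otimesh_{W\sO_Y}\WOm{}{j}{Y})=0$ for $q\geq 1$, and this persists after inverting $p$. It then suffices to verify the topological Mittag-Leffler condition on the inverse system of topological $K$-vector spaces $(\Gamma(U,(\sPt^{(m)\,W}_{X,Y}\otimesh_{W\sO_Y}\WOm{}{j}{Y})\otimes K))_m$ endowed with the $p$-adic topology coming from their integral models. The pro-isomorphism of (i) allows one to replace $\sPt^{(m)\,W}_{X,Y}$ by $\sPh^{(m)\,W}_{X,Y}$, and a density argument analogous to the end of the proof of Proposition~\ref{ssCompInvSys}, showing that the image of $\Gamma(U,\sO_\P)_K$ for a local formal lifting $\P$ of $U$ is dense in each term, supplies the required condition. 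This topological step will be the main obstacle: the transition maps $\rhot^{(m)}$ are not surjective—a PD-power $(f^{p^{m-1}})^{[k]}$ is not hit by PD-powers of $p^m$-th powers—so the algebraic Mittag-Leffler criterion does not suffice, and the density available only after inverting $p$ must be used essentially.
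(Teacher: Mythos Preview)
Your arguments for (i) and for the first vanishing in (ii) are correct and coincide with the paper's proof: the pro-isomorphism coming from Lemma~\ref{ssCompInv} (together with the compatibility relation~\eqref{compsigma}, which you should mention in order to build an actual morphism of inverse systems after dividing by $p^{s(m)}$) yields an inverse to $\pi\hbul$, and the first vanishing follows from quasi-coherence plus surjective transition maps via algebraic Mittag-Leffler.

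For the second vanishing your overall strategy---topological Mittag-Leffler for the $p$-adic topology on sections over affines---is the right one and is what the paper does, but two ingredients in your execution are off. First, to use the $p$-adic topology at all you must know that each $\Gamma(U,\sPt^{(m)\,W}_{X,Y}\otimesh_{W\sO_Y}\WOm{}{j}{Y})$ is $p$-adically separated and complete; this is not formal, and the paper obtains it from Proposition~\ref{sspCompl}, which applies precisely because the crystal $\sPb^{(m)}_{X,Y}$ is \emph{flat} over $W$ (this is the whole reason for introducing the $p$-torsion free envelopes in~\ref{ssAltPm}). Your suggestion to pass back to $\sPh^{(m)\,W}_{X,Y}$ via the pro-isomorphism of (i) goes in the wrong direction: $\sP^{(m)}_{X,Y}$ may have $p$-torsion, so Proposition~\ref{sspCompl} is not available for it, and the pro-isomorphism only holds after inverting $p$, which does not help control the integral $p$-adic topology.

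Second, the density argument you propose, modeled on Proposition~\ref{ssCompInvSys} with $\Gamma(U,\sO_\P)_K$ dense in each term, does not work as stated: for $j>0$ there is no obvious map from $\sO_\P$ into $\sPt^{(m)\,W}_{X,Y}\otimesh\WOm{}{j}{Y}$, and even for $j=0$ the image of $t_F$ need not be dense in $W\sO_Y$-algebras. The paper instead shows that the image of $\Gamma(U,\WOm{}{j}{Y})\otimes K$ (via the structural map $W\sO_Y\to\sPt^{(m)\,W}_{X,Y}$) is $p$-adically dense in the image of $\rhot^{(m+1)}$. This requires a concrete PD-computation: writing $x=\sum_{\uk}([\uf]^{p^{m+1}})^{[\uk]}\otimes\omega_{\uk}$ using Teichm\"uller generators, one gets
\[
\rhot^{(m+1)}(x)=\sum_{\uk}\frac{(p\uk)!}{\uk!}\,([\uf]^{p^m})^{[p\uk]}\otimes\omega_{\uk},
\]
and the identity $v_p((pk)!/k!)=k$ lets one split this sum, for any given $n$, into a finite piece lying in $\Gamma(U,\WOm{}{j}{Y})\otimes K$ (since $\frac{(p\uk)!}{\uk!}([\uf]^{p^m})^{[p\uk]}=\frac{1}{\uk!}[\uf]^{p^{m+1}\uk}\in W\sO_Y\otimes K$) plus a piece in $p^n\Gamma(U,\sPt^{(m)\,W}_{X,Y}\otimesh\WOm{}{j}{Y})$. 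This explicit calculation is the missing core of the argument.
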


In particular, the homomorphism $\pi$ of \eqref{fMort} is an isomorphism.

\begin{proof}
Applying Lemma \ref{ssCompInv}, we can find an increasing sequence of integers
$(s(m))_{m\geq 1}$ such that, for each $m$, there exists a morphism of crystals
$\sigma^{(m)}_{s(m)} : \sPb^{(m)}_{X,Y} \to \sP^{(m-1)}_{X,Y}$ satisfying 
\eqref{isog} for $s = s(m)$. For any $m \geq 1$, these morphisms satisfy the relation
\eq{compsigma}{ p^{s(m+1)-s(m)}\sigma_{s(m)}^{(m)} \circ \rhob^{(m+1)} = 
\rho^{(m)} \circ \sigma_{s(m+1)}^{(m+1)}, }
which follows from \eqref{isog} because $\pi^{(m+1)}$ is an epimorphism. 

By functoriality, the morphisms $\sigma_{s(m)}^{(m)}$ define homomorphisms of
$W\sO_Y$-algebras $\sigmat_{s(m)}^{(m)} : \sPt^{(m)\,W}_{X,Y} \to \sPh^{(m-1)\,W}$ and
morphisms of complexes
\eqn{ \sigmat_{s(m)}^{(m)\sbul} : \sPt^{(m)\,W}_{X,Y} \otimesh_{W\sO_Y} \WOmd{}{Y} \to 
\sPh^{(m-1)\,W}_{X,Y} \otimesh_{W\sO_Y} \WOmd{}{Y}. }
They satisfy the analogs of relations \eqref{isog} and \eqref{compsigma}. If we set
\eqn{ \psi^{(m)\sbul} = p^{-s(m)}\sigmat_{s(m)}^{(m)} : 
\sPt^{(m)\,W}_{X,Y} \otimesh_{W\sO_Y} \WOmd{}{Y} \otimes K \to 
\sPh^{(m-1)\,W}_{X,Y} \otimesh_{W\sO_Y} \WOmd{}{Y} \otimes K, }
we obtain when $m$ varies a family of morphisms of complexes, which commute with the 
transition morphisms $\rhoh^{(m)\sbul}$ and $\rhot^{(m)\sbul}$ and satisfy 
\eqn{ \psi^{(m)\sbul} \circ \pih^{(m)\sbul} = \rhoh^{(m)\sbul}, \quad\quad 
\pih^{(m-1)\sbul} \circ \psi^{(m)\sbul} 
= \rhot^{(m)\sbul}. }
Taking the inverse limit of the morphisms $\psi^{(m)\sbul}$ when $m$ varies provides a
morphism of complexes
\eqn{ \psi\hbul : \sAt^W_{X,Y} \otimesh_{W\sO_Y} \WOmd{}{Y} \to 
\sA^W_{X,Y} \otimesh_{W\sO_Y} \WOmd{}{Y}, }
which is inverse to $\pi\hbul$. 

To prove assertion (ii), we observe first that, by \ref{ssComplTens} and Lemma
\ref{ssQcoh}, $\sPb^{(m)\,W}_{X,Y,n} \otimes_{W_n\sO_Y} \WOm{n}{j}{Y}$ is a
quasi-coherent $W_n\sO_Y$-module, for all $n \geq 1$, $m \geq 0$. So, for fixed $m$ and
variable $n$, the inverse system $(\sPb^{(m)\,W}_{X,Y,n} \otimes_{W_n\sO_Y}
\WOm{n}{j}{Y})_{n\geq 1}$ has surjective transition morphisms and terms with vanishing
cohomology in degree $\geq 1$ on any affine open subset of $Y$. Therefore
\eqref{fVanRlimP} follows from the algebraic Mittag-Leffler criterion \cite[Prop.\ 
(13.2.3)]{EGA}.

Note that this Mittag-Leffler criterion also implies that, for any affine open subset 
$U \subset Y$, any $i \geq 1$, and any $j$, 
\eqn{ H^i(U, \sPt^{(m)\,W}_{X,Y} \otimesh_{W\sO_Y} \WOm{}{j}{Y}) = 0. }
Since $U$ is quasi-compact and separated,  it follows that, for $i \geq 1$, 
\eq{fVanHiK}{ H^i(U, (\sPt^{(m)\,W}_{X,Y} \otimesh_{W\sO_Y} \WOm{}{j}{Y}) \otimes K) = 0. }
So, in order to prove \eqref{fVanRlimA}, it suffices to prove that each $K$-vector
space $\Gamma(U, (\sPt^{(m)\,W}_{X,Y} \otimesh_{W\sO_Y} \WOm{}{j}{Y}) \otimes K)$ can
be endowed with a metrizable topological vector space structure for which it is
separated and complete, and such that the transition maps for varying $m$ are
continuous and satisfy the topological Mittag-Leffler condition (ML$'$)
\cite[O$_\text{III}$, Rem.\ (13.2.4) (i)]{EGA}. Since, by construction, 
$\sPt^{(m)\,W}_{X,Y}$ is flat over $W$, Proposition \ref{sspCompl} allows to use for
this purpose the $p$-adic topology, defined by the lattice $\Gamma(U,
\sPt^{(m)\,W}_{X,Y} \otimesh_{W\sO_Y} \WOm{}{j}{Y})$, which is metrizable, separated, 
and complete. To finish the proof, it is then sufficient to prove that, for all $m \geq
0$, the image of $\Gamma(U, \WOm{}{j}{Y} \otimes K)$ in $\Gamma(U, (\sPt^{(m)\,W}_{X,Y}
\otimesh_{W\sO_Y} \WOm{}{j}{Y}) \otimes K)$ is dense for the $p$-adic
topology in the image of $\Gamma(U,
(\sPt^{(m+1)\,W}_{X,Y} \otimesh_{W\sO_Y} \WOm{}{j}{Y}) \otimes K)$. 
As multiplication by $p$ is an homeomorphism of $\Gamma(U,
(\sPt^{(m)\,W}_{X,Y}
\otimesh_{W\sO_Y} \WOm{}{j}{Y}) \otimes K)$ for the $p$-adic topology, it is enough to
show that, for any $n \geq 0$,
\ml{fDense}{ \rhot^{(m+1)}(\Gamma(U, \sPt^{(m+1)\,W}_{X,Y} \otimesh_{W\sO_Y} 
\WOm{}{j}{Y})) \subset \\
\Gamma(U, \WOm{}{j}{Y}) \otimes K + p^n\Gamma(U, \sPt^{(m)\,W}_{X,Y} \otimesh_{W\sO_Y} 
\WOm{}{j}{Y}). }

Let $f_1,\ldots,f_r \in \Gamma(U, \sO_Y)$ be a family of generators over $U$ of the
ideal $\sI_Y$ of $X$ in $Y$. Then, for all $i \geq 2$ and all $m \geq 0$, the kernel
$\sK^{(m)}_i$ of the homomorphism $W_i\sO_Y \to \sO_{X^{(m)}}$ is generated by
$VW_{i-1}\sO_Y$ and by the Teichm\"uller representatives
$[f_1^{p^{m}}],\ldots,[f_r^{p^m}]$. By construction, $\sPb^{(m)\,W}_{X,Y,i}$ is the
divided power envelope of $\sK^{(m)}_i$ with compatibility with the canonical divided
powers of $VW_{i-1}\sO_Y$. This implies that $\sPb^{(m)\,W}_{X,Y,i}$ is generated as a
$W_i\sO_Y$-module by the sections
\eq{fPDgen}{ ([\uf]^{p^m})^{[\uk]} := ([f_1]^{p^m})^{[k_1]} \cdots 
([f_r]^{p^m})^{[k_r]}, }
for $\uk = (k_1,\ldots,k_r) \in \N^r$. Therefore, an element $x \in \Gamma(U,
\sPt^{(m+1)\,W}_{X,Y} \otimesh_{W\sO_Y} \WOm{}{j}{Y})$ can be written as an infinite sum 
\eqn{ x = \sum_{\uk} ([\uf]^{p^{m+1}})^{[\uk]}\otimes\omega_{\uk}, }
where the sections $\omega_{\uk} \in \Gamma(U, \WOm{}{j}{Y})$ are such that, for any
$i$, $\omega_{\uk} \in \Gamma(U, \Fil^i\WOm{}{j}{Y})$ for all but a finite number of
$\uk$'s. As $\rhot^{(m+1)}$ is a PD-morphism, we can write 
\eqn{ \rhot^{(m+1)}(x)  =  \sum_{\uk} (([\uf]^{p^m})^p)^{[\uk]}\otimes\omega_{\uk} 
 =  \sum_{\uk} \frac{(p\uk)!}{\uk!}([\uf]^{p^m})^{[p\uk]}\otimes\omega_{\uk}.
}
Using the classical formula for the $p$-adic valuation of $v_p(k!)$ \cite[Lemma 3.3]{BO}, 
we get 
\eqn{ v_p(\frac{(pk)!}{k!}) = k. }
So we can write
\eqn{ \rhot^{(m+1)}(x)  =  \sum_{\forall j,\, k_j < n} 
\frac{(p\uk)!}{\uk!}([\uf]^{p^m})^{[p\uk]}\otimes\omega_{\uk} + 
p^n \sum_{\exists j,\, k_j \geq n} 
\frac{(p\uk)!}{\uk!p^n}([\uf]^{p^m})^{[p\uk]}\otimes\omega_{\uk}, }
a decomposition in which the first term belongs to $\Gamma(U, \WOm{}{j}{Y}) \otimes K$
and the second one to $p^n\Gamma(U, \sPt^{(m)\,W}_{X,Y} \otimesh_{W\sO_Y}
\WOm{}{j}{Y})$. This proves \eqref{fDense}, and ends the proof of the proposition. 
\end{proof}

\subsection{}\label{ssProofMain}
\textit{Proof of Theorem \ref{ssMain}}.  
Let us assume first that $Y$ can be embedded as a closed subscheme in a smooth 
formal scheme $\P$ endowed with a lifting $F$ of the absolute Frobenius endomorphism.
For $n \geq 1$, let $P_n$ be the reduction of $\P$ over $W_n$. We denote by $\sJ$ 
the ideal of $X$ in $\P$, $\sJ_n$ its ideal in $P_n$, and we keep the
notation of \ref{ssLimitEnv} and \ref{ssAXYdRW}. Let $\sK$ be the ideal of $Y$ in
$\P$, and $\sK_n$ its ideal in $P_n$.

Let $\sP(\sK)$, $\sP(\sK_n) \simeq \sP(\sK)/p^n\sP(\sK)$ be the divided powers
envelopes of $\sK$ and $\sK_n$. We denote by $T_n$ the PD-thickening of $Y$ given by
$T_n = \Spec \sP(\sK_n)$ and by $\sP^{(m)\,T}_{X,Y,n}$, $\sPb^{(m)\,T}_{X,Y,n}$ the
evaluations on $T_n$ of the crystals $\sP^{(m)}_{X,Y}$, $\sPb^{(m)}_{X,Y}$. Let 
$\sK_{T_n} = \Ker(\sO_{T_n} \surj \sO_Y)$ be the PD-ideal generated by $\sK_n$ in
$\sP(\sK_n)$, and let $\delta$ be its canonical PD-structure. Then the ideal
$\sI_{T_n}$ of $X$ in $T_n$ is given by
\eqn{ \sI_{T_n}  =  \sJ_n\sO_{T_n} + \sK_{T_n}. }
Therefore, 
\eqn{ (\sI_{T_n})^{(m)}_{\sK_{T_n}} = \sJ^{(m)}_n\sO_{T_n} + \sK_{T_n}, }
and the description \eqref{fPmXY} of $\sP^{(m)\,T}_{X,Y,n}$ shows that 
\eqn{ \sP^{(m)\,T}_{X,Y,n} = \sP_{\delta}(\sJ^{(m)}_n \sO_{T_n}+ \sK_{T_n}) = 
\sP_{\delta}(\sJ^{(m)}_n\sO_{T_n}). }
But the universal property of divided power envelopes provides formally a canonical 
isomorphism
\eqn{ \sP_{\delta}(\sJ^{(m)}_n \sO_{T_n}+ \sK_{T_n}) \simeq \sP(\sJ^{(m)}_n + \sK_n). }
So we finally get a canonical isomorphism
\eq{fPmTn}{ \sP^{(m)\,T}_{X,Y,n} \simeq \sP(\sJ^{(m)}_n + \sK_n). }

Let 
\gan{ \sPh^{(m)\,T}_{X,Y} = \varprojlim_n \sP^{(m)\,T}_{X,Y,n} = \sPh(\sJ^{(m)} + \sK), \\
\sPt^{(m)\,T}_{X,Y} = \varprojlim_n \sPb^{(m)\,T}_{X,Y,n} = \sPt(\sJ^{(m)} + \sK). }
We get by \eqref{dREtodRWE} canonical morphisms
of complexes that sit in a commutative square
\eq{fMorm1}{ \xymatrix{
\sPh^{(m)\,T}_{X,Y} \otimes_{\sO_{P_n}} \Omd{\P} \ar[r] \ar[d] & 
\sPh^{(m)\,W}_{X,Y} \otimesh_{W\sO_Y} \WOmd{}{Y} \ar[d] \\
\sPt^{(m)\,T}_{X,Y} \otimes_{\sO_{P_n}} \Omd{\P} \ar[r] & 
\sPt^{(m)\,W}_{X,Y} \otimesh_{W\sO_Y} \WOmd{}{Y} . 
} }
Tensorizing by $K$ and taking the inverse limit for variable $m$, we get a similar 
square 
\eq{fMor1}{ \xymatrix{
\varprojlim_m (\sPh^{(m)\,T}_{X,Y} \otimes K) \otimes_{\sO_{\P}} \Omd{\P}
\ar[r] \ar[d]^-{\sim} & \sA^W_{X,Y} \otimesh_{W\sO_Y} \WOmd{}{Y} \ar[d]^-{\sim} \\ 
\varprojlim_m (\sPt^{(m)\,T}_{X,Y} \otimes K) \otimes_{\sO_{\P}} \Omd{\P}
\ar[r] & \sAt^W_{X,Y} \otimesh_{W\sO_Y} \WOmd{}{Y}, 
} }
in which the right vertical arrow is an isomorphism thanks to Proposition
\ref{ssIsoDRWt}. The left vertical one is also an isomorphism, because Lemma
\ref{ssCompInv} implies by the same argument that the homomorphism
\eqn{ \varprojlim_m (\sPh^{(m)\,T}_{X,Y} \otimes K) \to \varprojlim_m 
(\sPt^{(m)\,T}_{X,Y} \otimes K), } 
which is induced by the morphism of inverse systems of crystals $\pi^{(m)} :
\sP^{(m)}_{X,Y} \to \sPb^{(m)}_{X,Y}$, is an isomorphism. On the other hand, Corollary
\ref{ssCrysEtodRWE} implies that the lower horizontal arrow of \eqref{fMorm1} is a
quasi-isomorphism, since the crystal $\sPb^{(m)}_{X,Y}$ is flat over $W$ and
quasi-coherent. For all $j$, the inverse system $(\sPt^{(m)\,T}_{X,Y} \otimes K \otimes
\Omega^{j}_{\P})_{m \geq 0}$ is $\varprojlim$-acyclic by \eqref{fPmTn} and
\eqref{fVanRlim1}, and the inverse system $((\sPt^{(m)\,W}_{X,Y} \otimesh_{W\sO_Y}
\WOm{}{j}{Y}) \otimes K)_{m \geq 0}$ is $\varprojlim$-acyclic by \eqref{fVanRlimA}.
Therefore, the lower horizontal arrow of \eqref{fMor1} is again a quasi-isomorphism,
and the upper horizontal arrow is a quasi-isomorphism too.

We now consider the canonical morphism of complexes defined by the commutative 
diagram 
\eq{fMorRigtoCris}{ \xymatrix{
\sA_{X,\P} \otimes_{\sO_{\P}} \Omd{\P} \ar[d]^-{\sim}_-{\eqref{fAPinvlim}} \ar[r] & 
\varprojlim_m (\sPh^{(m)\,T}_{X,Y}\otimes K) \otimes_{\sO_{\P}} \Omd{\P} \\
\varprojlim_m \sPh(\sJ^{(m)})_K \otimes_{\sO_{\P}} \Omd{\P} \ar[r] &
\varprojlim_m \sPh(\sJ^{(m)} + \sK)_K \otimes_{\sO_{\P}} \Omd{\P}. \ar@{=}[u]
} }
In this diagram, the lower horizontal arrow is the arrow \eqref{fJmtoJmK} induced by
the functoriality map for PD-envelopes. As shown in the proof of Theorem 
\ref{ssCompRigCrys}, this arrow is a quasi-isomorphism. Therefore, we can define the
quasi-isomorphism \eqref{fMainF} to be the composition of the morphism
\eqref{fMorRigtoCris} with the upper horizontal arrow of diagram \eqref{fMor1}. Each
of these two morphisms is functorial in an obvious way with respect to the data $(X, Y,
\P, F)$, and, as isomorphisms 
\gan{ \RR\uGamma\rig(X/K) \riso \RR\varprojlim_m(\RR u_{Y/W\,*}\sP^{(m)}_{X,Y})_K, \\
\RR\varprojlim_m(\RR u_{Y/W\,*}\sP^{(m)}_{X,Y})_K \riso \sA^W_{X,Y}
\otimesh_{W\sO_Y} \WOmd{}{Y}}
in the derived category $\Db(X,K)$, they are independent of the choice of the embedding
into $(\P,F)$ (by the usual diagonal embedding argument). This proves assertion (i) of
Theorem \ref{ssMain}.

To prove assertion (ii), we argue again as in \ref{ssConstCase}. We choose an affine open
covering $\fU = (U_{\alpha})$ of $Y$, and, for each $\alpha$, we choose a closed
immersion $U_{\alpha} \inj \P_{\alpha}$ into a smooth formal scheme endowed
with a lifting $F_{\alpha}$ of the absolute Frobenius endomorphism. For each
multi-index $\ualpha = (\alpha_0,\ldots,\alpha_i)$, we endow the product $\P_{\ualpha}
= P_{\alpha_0} \times_W \cdots \times_W P_{\alpha_i}$ with the endomorphism
$F_{\ualpha} = F_{\alpha_0} \times \cdots \times F_{\alpha_i}$, and we embedd
$U_{\ualpha} = U_{\alpha_0} \cap \ldots \cap U_{\alpha_i}$ diagonally into
$\P_{\ualpha}$. Let $j_{\ualpha}$ be the inclusion of $X \cap
U_{\ualpha}$ into $X$. As a representative of
$\RR\uGamma\rig(X/K)$, we use the total complex associated to the \v{C}ech bicomplex
described in \ref{ssTubes}: in bidegree $(j,i)$, it is given by
\eq{fCechRig}{ \prod_{\ualpha = (\alpha_0,\ldots,\alpha_i)} j_{\ualpha\,*}
(\sA_{X \cap U_{\ualpha}, \P_{\ualpha}} \otimes 
\Omega^j_{\P_{\ualpha}}). }
On the other hand, we can represent $\sA^W_{X,Y} \otimesh_{W\sO_Y} \WOmd{}{Y}$ by its
\v{C}ech resolution defined by the covering $\fU$, which is the total complex
associated to the bicomplex defined in bidegree $(j,i)$ by
\eq{fCechdRW}{ \prod_{\ualpha=(\alpha_0,\ldots,\alpha_i)} j_{\ualpha\,*}
(\sA^W_{X \cap U_{\ualpha}, 
U_{\ualpha}} \otimesh_{W\sO_{U_{\ualpha}}} \WOm{}{j}{U_{\ualpha}}). }
For fixed $\ualpha$ and variable $j$, we have a quasi-isomorphism of complexes
\eqref{fMainF} from $\sA_{X \cap U_{\ualpha}, \P_{\ualpha}} \otimes
\Omega^j_{\P_{\ualpha}}$ to $\sA^W_{X \cap U_{\ualpha}, U_{\ualpha}}
\otimesh_{W\sO_{U_{\ualpha}}} \WOm{}{j}{U_{\ualpha}}$, and its direct image by
$j_{\ualpha}$ remains a quasi-isomorphism, because
$j_{\ualpha}$ is an affine morphism, and, by Mittag-Leffler, the terms
of these complexes have no higher cohomology on affine subsets. The functoriality of
\eqref{fMainF} implies that these quasi-isomorphisms commute with the differential of
the \v{C}ech complexes when $\ualpha$ varies. Therefore they induce a quasi-isomorphism
between the associated total complexes, and \eqref{fMainLoc} is defined by its image in
$\Db(X,K)$.

To check that \eqref{fMainLoc} does not depend on the choices, one considers another
covering $\fU' = (U'_{\alpha'})$, with closed immersions $U'_{\alpha'} \inj
(\P'_{\alpha'},F'_{\alpha'})$. One can then refine both $\fU$ and $\fU'$ by the
covering by the intersections $U_{\alpha} \cap U'_{\alpha'}$, which can be embedded into
$(\P_{\alpha} \times_W \P'_{\alpha'}, F_{\alpha} \times F'_{\alpha'})$. Then the
functoriality of \eqref{fMainF} provides commutative diagrams showing that the
morphisms \eqref{fMainLoc} constructed by means of $\fU$ and $\fU'$ are equal in
$\Db(X,K)$. For a morphism of couples $(w, v) : (X',Y') \to (X,Y)$, a similar argument
using affine coverings $\fU'$, $\fU$ of $Y'$, $Y$, and the embeddings $U'_{\alpha'}
\cap v^{-1}(U_{\alpha}) \inj \P'_{\alpha'} \times \P_{\alpha}$ defined by the graph of
$v$, shows that the morphisms \eqref{fMainLoc} for $(X',Y')$ and $(X,Y)$ commute with
the functoriality morphisms on the sources and targets. This completes the proof of
assertion (ii).

Finally, in view of the definition of rigid cohomology for proper schemes recalled in
\ref{ssTubes}, assertion (iii) follows from assertion (ii) by taking sections on $X$.
\hfill$\Box$

\subsection{}\label{ssCohRigc}
We now briefly explain how to extend the previous results to rigid cohomology with
compact supports. 

Let $X$ be a $k$-scheme of finite type, $Z \subset X$ a closed subscheme, and $U = X
\setminus Z$. We generalize the complexes entering in Theorem \ref{ssMain} as
follows.

\alphab Assume that there exists a closed immersion $X \inj \P$ of $X$ into a smooth 
formal scheme $\P$. By functoriality, we obtain a canonical morphism of 
complexes 
\eqn{ \sA_{X,\P}\otimes_{\sO_{\P}}\Omd{\P} \to \sA_{Z,\P}\otimes_{\sO_{\P}}\Omd{\P}, }
which we view as bicomplex. We will use the shorter notation
\eqn{ \sA^{\cc}_{X\setminus Z,\P} \otimes_{\sO_{\P}} \Omd{\P} := (\sA_{X,\P}
\otimes_{\sO_{\P}} \Omd{\P} \to \sA_{Z,\P} \otimes_{\sO_{\P}} \Omd{\P})\tot }
for the associated total complex. From this definition, we obtain a short exact
sequence of complexes
\eq{fExF}{ 0 \to \sA_{Z,\P}\otimes_{\sO_{\P}}\Omd{\P}[-1] \to 
\sA^{\cc}_{X\setminus Z,\P} \otimes_{\sO_{\P}} \Omd{\P} \to 
\sA_{X,\P}\otimes_{\sO_{\P}}\Omd{\P} \to 0. }
We will denote by $\RR\uGamma\rigc(X\setminus Z/K)$ the image in $\Db(X,K)$ of the
complex $\sA^{\cc}_{X\setminus Z,\P} \otimes_{\sO_{\P}} \Omd{\P}$; up to canonical
isomorphism, it does not depend on the choice of the embedding of $X$ into $\P$.

\alphab Without embedding assumption, we can choose an affine covering $\fU = 
(U_{\alpha})$ of $X$ and closed immersions $U_{\alpha} \inj \P_{\alpha}$ into 
smooth formal schemes. We obtain a functoriality morphism between the 
\v{C}ech bicomplexes defined by \eqref{fCechRig} for $X$ and for $Z$, which we can 
view as a triple complex. We generalize the previous definition by denoting 
$\RR\uGamma\rigc(X\setminus Z/K)$ the image in $\Db(X,K)$ of the total complex
associated to this triple complex. Up to canonical isomorphism, it is still independent
of the choices. From the exact sequences \eqref{fExF} on intersections of open subsets 
of $\fU$, we get in $\Db(X,K)$ an exact triangle
\eq{fExLoc}{ \RR\uGamma\rigc(X\setminus Z/K) \to \RR\uGamma\rig(X/K) \to 
\RR\uGamma\rig(Z/K) \xra{\,+1\,}. }

\alphab When $X$ is proper, the rigid cohomology of $U$ with compact support can be 
defined by 
\eqn{ \RR\Gamma\rigc(U/K) := \RR\Gamma(X, \RR\uGamma\rigc(X\setminus Z/K)). }
Note that, under the assumptions of a), \eqref{fDefAXP} provides a canonical 
isomorphism 
\eqn{ \RR\Gamma\rigc(U/K) \simeq \RR\Gamma(]X[_{\P}, (\Omd{]X[_{\P}} \to
u_{\ast}(\Omd{]Z[_{\P}}))\tot), }
where $u$ denotes the inclusion $]Z[_{\P} \inj ]X[_{\P}$. Without embedding assumption, 
the complex $\RR\Gamma\rigc(U/K)$ does not depend on the compactification $X$ of $U$, 
and satisfies with respect to $U$ the usual functoriality properties of cohomology with
compact supports (see \cite[section 3]{Be86} and \cite[6.4]{LS07} for details). Taking
sections on $X$, the triangle \eqref{fExLoc} yields the usual triangle
\eq{fExGlob}{ \RR\Gamma\rigc(U/K) \to \RR\Gamma\rig(X/K) \to 
\RR\Gamma\rig(Z/K) \xra{\,+1\,}.}

\alphab Let now $i : X \inj Y$ be a closed immersion of $X$ into a smooth $k$-scheme.
By functoriality, we obtain a canonical morphism of complexes
\eqn{ \sA^W_{X,Y} \otimesh_{W\sO_Y} \WOmd{}{Y} \to 
\sA^W_{Z,Y} \otimesh_{W\sO_Y} \WOmd{}{Y}, }
which we consider again as a bicomplex. We will denote by
\eqn{ \sA^{\cc,W}_{X\setminus Z,Y} \otimesh_{W\sO_Y} \WOmd{}{Y} := 
(\sA^W_{X,Y} \otimesh_{W\sO_Y} \WOmd{}{Y} \to 
\sA^W_{Z,Y} \otimesh_{W\sO_Y} \WOmd{}{Y})\tot}
the associated total complex, which sits in a short exact sequence of complexes 
\eq{fExW}{ 0 \to \sA^W_{Z,Y} \otimesh_{W\sO_Y} \WOmd{}{Y}[-1] \to 
\sA^{\cc,W}_{X\setminus Z,Y} \otimesh_{W\sO_Y} \WOmd{}{Y} \to 
\sA^W_{X,Y} \otimesh_{W\sO_Y} \WOmd{}{Y} \to 0. }

\begin{cor}\label{ssCorMain}
Let $X$ be a $k$-scheme of finite type, $Z \subset X$ a closed subscheme, $U = X 
\setminus Z$, and $i : X \inj Y$ a closed immersion in a smooth $k$-scheme. 

\romain If there exists a closed immersion $Y \inj \P$ of $Y$ into a smooth 
formal scheme $\P$ endowed with a Frobenius lifting, these data define a
quasi-isomorphism of complexes of sheaves of $K$-vector spaces supported in $X$
\eq{fMaincF}{ \sA^{\cc}_{X\setminus Z,\P} \otimes_{\sO_{\P}} \Omd{\P} \lra 
\sA^{\cc,W}_{X\setminus Z,Y} \otimesh_{W\sO_Y} \WOmd{}{Y}, }
functorial with respect to $(Z,X,Y,\P,F)$. Its image in $\Db(X, K)$ is independent of
the choice of the embedding of $Y$ into $(\P,F)$. Together with the quasi-isomorphisms
\eqref{fMainF} for $X$ and $Z$, it defines a morphism of exact sequences from the exact
sequence \eqref{fExF} to the exact sequence \eqref{fExW}.

\romain Without extra assumption, there exists in $\Db(X, K)$ a canonical isomorphism 
\eq{fMaincLoc}{ \RR\uGamma\rigc(X\setminus Z/K) \riso 
\sA^{\cc,W}_{X\setminus Z,Y} \otimesh_{W\sO_Y} \WOmd{}{Y}, }
functorial with respect to the triple $(Z,X,Y)$, which is defined by \eqref{fMaincF} 
whenever the assumptions of {\normalfont{(i)}} are satisfied. Together with the 
isomorphisms \eqref{fMainLoc} for $X$ and $Z$, it defines a isomorphism of exact 
triangles from \eqref{fExLoc} to the triangle defined by the exact sequence \eqref{fExW}.

\romain If $X$ is proper, there exists in $\Db(K)$ a canonical isomorphism
\eq{fMaincGlob}{ \RR\Gamma\rigc(U/K) \riso 
\RR\Gamma(X, \sA^{\cc,W}_{X\setminus Z,Y} \otimesh_{W\sO_Y} \WOmd{}{Y}), }
functorial with respect to the triple $(Z, X, Y)$. Together with the isomorphisms 
\eqref{fMainGlob} for $X$ and $Z$, it defines an isomorphism of exact triangles from 
\eqref{fExGlob} to the triangle obtained by applying the functor $\RR\Gamma(X, -)$ to 
the exact sequence \eqref{fExW}. 
\end{cor}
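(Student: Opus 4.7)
The plan is to deduce the corollary from Theorem \ref{ssMain} by exploiting the fact that, by construction (see a) and d) of \ref{ssCohRigc}), both $\sA^{\cc}_{X\setminus Z,\P} \otimes \Omd{\P}$ and $\sA^{\cc,W}_{X\setminus Z,Y} \otimesh_{W\sO_Y} \WOmd{}{Y}$ are the total complexes associated to two-term bicomplexes in which the horizontal differential is the functoriality map for the closed immersion $Z \inj X$. All we need is to compare these bicomplexes.

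To prove assertion (i), I would apply Theorem \ref{ssMain}(i) to the closed immersions $X \inj Y \inj \P$ and $Z \inj Y \inj \P$ separately, using the same data $(Y,\P,F)$. Since \eqref{fMainF} is functorial with respect to the pair $(X,Y)$, applying it to the morphism $Z \to X$ yields a commutative square
\begin{equation*}
\xymatrix{
\sA_{X,\P} \otimes_{\sO_{\P}} \Omd{\P} \ar[r] \ar[d] &
\sA^W_{X,Y} \otimesh_{W\sO_Y} \WOmd{}{Y} \ar[d] \\
\sA_{Z,\P} \otimes_{\sO_{\P}} \Omd{\P} \ar[r] &
\sA^W_{Z,Y} \otimesh_{W\sO_Y} \WOmd{}{Y}.
}
\end{equation*}
Passing to the associated total complexes of mapping cones gives the morphism \eqref{fMaincF}. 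By construction, it fits into a morphism between the short exact sequences \eqref{fExF} and \eqref{fExW}, and the five-lemma applied to the associated long exact cohomology sequences, combined with the fact that the horizontal arrows in the square above are quasi-isomorphisms by Theorem \ref{ssMain}(i), shows that \eqref{fMaincF} is a quasi-isomorphism. Functoriality in $(Z,X,Y,\P,F)$ follows from the corresponding functoriality of \eqref{fMainF}, and independence of the embedding into $(\P,F)$ in $\Db(X,K)$ is obtained by the usual diagonal embedding argument, exactly as for \eqref{fMainF}.

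For assertion (ii), I would follow the strategy used for Theorem \ref{ssMain}(ii): choose an affine open covering $\fU = (U_\alpha)$ of $Y$ and closed immersions $U_\alpha \inj (\P_\alpha, F_\alpha)$ as in \ref{ssProofMain}, form for each multi-index $\ualpha$ the product $(\P_{\ualpha}, F_{\ualpha})$, and represent both sides of \eqref{fMaincLoc} via the \v{C}ech bicomplexes defined in b) and d) of \ref{ssCohRigc}. On each intersection $U_{\ualpha}$, the construction of (i) applied to the induced immersions $X \cap U_{\ualpha} \inj U_{\ualpha} \inj \P_{\ualpha}$ and $Z \cap U_{\ualpha} \inj U_{\ualpha} \inj \P_{\ualpha}$ yields a quasi-isomorphism compatible with the \v{C}ech differentials, because \eqref{fMaincF} is functorial. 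Taking direct images $j_{\ualpha\,*}$ preserves quasi-isomorphisms as in the proof of Theorem \ref{ssMain}(ii) (affineness of $j_{\ualpha}$ and Mittag-Leffler vanishing on affines), so the induced morphism between the associated total complexes is a quasi-isomorphism and defines \eqref{fMaincLoc} in $\Db(X,K)$. Independence of the choice of covering and Frobenius liftings, and functoriality in $(Z,X,Y)$, are obtained by refining two coverings by a common one as in \ref{ssProofMain}; compatibility with the triangles \eqref{fExLoc} and \eqref{fExW} is automatic from the bicomplex construction.

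Assertion (iii) then follows immediately from (ii) by applying the functor $\RR\Gamma(X,-)$, in view of the definition $\RR\Gamma\rigc(U/K) = \RR\Gamma(X, \RR\uGamma\rigc(X\setminus Z/K))$ recalled in c) of \ref{ssCohRigc}. The only step that requires any real care is verifying the naturality of the construction at the level of complexes (so as to get an actual morphism of short exact sequences rather than merely a morphism of triangles), but this is ensured by the fact that \eqref{fMainF} is a genuine morphism of complexes and is strictly functorial in the closed immersion; no new analytical input beyond what was established in the proof of Theorem \ref{ssMain} is needed.
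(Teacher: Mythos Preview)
Your proof is correct and follows essentially the same approach as the paper: deduce everything from the functoriality of \eqref{fMainF} with respect to the closed immersion $Z \inj X$, first at the level of complexes to get (i), then at the level of \v{C}ech resolutions for (ii), and finally by taking global sections for (iii). The paper's own proof is simply a terser version of what you wrote, omitting the explicit commutative square and the five-lemma step that you helpfully spelled out.
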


\begin{proof}
Assertion (i) is a straightforward consequence of the functoriality properties of the 
quasi-isomorphism \eqref{fMainF}, applied to the immersion $Z \inj X$. Assertion (ii) 
follows from the same functorialities, applied at the level of \v{C}ech resolutions
after choosing an open covering of $Y$ by affine open subsets $U_{\alpha}$ and closed
immersions $U_\alpha \inj (\P_\alpha,F_\alpha)$, as in the proof of Theorem 
\ref{ssMain}~(ii) given in \ref{ssProofMain}. Finally, assertion (iii) follows from 
the previous one by taking global sections. 
\end{proof}

\section{Relation with Witt vector cohomology}\label{sWittCoh}

We end this article with the relation between Theorem \ref{ssMain} (with its corollary
\ref{ssCorMain}) and Theorem 1.1 of \cite{BBE}, which identifies the slope $< 1$ part
of rigid cohomology with compact supports with Witt vector cohomology with compact 
supports.

\subsection{}\label{ssMortoW}
Let $Y$ be a smooth $k$-scheme of finite type, and let $X \inj Y$ be a closed subscheme
defined by an ideal $\sI \subset \sO_Y$. Let $W_n\sI := \Ker(W_n\sO_Y \to W_n\sO_X)$,
$W\sI := \Ker(W\sO_Y \to W\sO_X)$ be the subsheaves of Witt vectors with coefficients
in $\sI$. By construction, $\sP^{(0)\,W}_{X,Y,n}$ is the divided power envelope of
$W_n\sI$ with compatibility with the canonical divided powers of $VW_{n-1}\sO_Y$ (see
\eqref{fPmXY}). Since these divided powers extend to $W_n\sO_Y/W_n\sI = W_n\sO_X$ (as
the canonical divided powers of $VW_{n-1}\sO_X$), there are canonical ring
homomorphisms
\eqn{ \sP^{(0)\,W}_{X,Y,n} \lra W_n\sO_X, \quad\quad \sPh^{(0)\,W}_{X,Y} \lra W\sO_X. }
By composition with the canonical projection, we obtain a ring homomorphism 
\eq{fAWtoW}{ \sA^W_{X,Y} = \varprojlim_m (\sPh^{(m)\,W}_{X,Y} \otimes K) \to 
\sPh^{(0)\,W}_{X,Y} \otimes K \to W\sO_{X,K}.  }
We can now compose with the augmentation morphism, and we obtain a morphism of 
complexes 
\eq{fAWOmtoW}{ \sA^W_{X,Y} \otimesh_{W\sO_Y} \WOmd{}{Y} \lra W\sO_{X,K}, }
where we view $W\sO_{X,K}$ as a complex concentrated in degree $0$. 

Let $Z \subset X$ be a closed subscheme defined by an ideal $\sH \subset \sO_X$, and 
let $U = X \setminus Z$. By functoriality, we obtain a morphism of complexes 
\eq{fAcWtoW}{ \sA^{\cc,W}_{X\setminus Z,Y} \otimesh_{W\sO_Y} \WOmd{}{Y} \lra 
(W\sO_{X,K} \to W\sO_{Z,K})\tot. }
Note that the target complex is actually a resolution of $W\sH_K$. 

Taking the image of \eqref{fAWOmtoW} (resp.\ \eqref{fAcWtoW}) in $\Db(X,K)$, and
composing with the isomorphism \eqref{fMainLoc} (resp.\ \eqref{fMaincLoc}), we obtain
canonical morphisms
\ga{fGamtoW}{ \RR\uGamma\rig(X/K) \lra W\sO_{X,K}, \\ 
\label{fGamctoW} \RR\uGamma\rigc(X\setminus Z/K) \lra W\sH_K. }
Assuming that $X$ is proper, and taking global sections on $X$, they define canonical
morphisms
\ga{fRigtoW}{ \RR\Gamma\rig(X/K) \lra \RR\Gamma(X, W\sO_{X,K}), \\ 
\label{fRigctoWc} \RR\Gamma\rigc(U/K) \lra \RR\Gamma_{\cc}(U, W\sO_{U,K}), }
where the complex $\RR\Gamma_{\cc}(U, W\sO_{U,K})$ is the Witt vector cohomology with 
compact supports defined by \cite[(2.12)]{BBE}.

\subsection{}\label{ssBBE}
Let $X$ be a $k$-scheme of finite type, $Z \subset X$ a closed subscheme, and $U = X
\setminus Z$. In \cite{BBE}, morphisms 
\ga{fGamtoWBBE}{ a_X : \RR\uGamma\rig(X/K) \lra W\sO_{X,K}, \\
\label{fGamctoWBBE} a_{X,U} : \RR\uGamma\rigc(X \setminus Z/K) \lra W\sH_K }
are defined in $D^b(X,K)$ without using the de Rham-Witt complex of a smooth ambient
scheme (see \cite[Prop.\ 4.4 and Thm.\ 4.5]{BBE}). To compare these morphisms with
\eqref{fGamtoW} and \eqref{fGamctoW}, we now recall their construction.

Assume first that there exists a closed immersion $X \inj \P$, where $\P$ is a smooth 
formal scheme endowed with a lifting $F$ of the absolute Frobenius 
endomorphism. Then $F$ defines a ring homomorphism \cite[(4.4)]{BBE} 
\eq{fAtoWBBE}{ \sA_{X,\P} \to W\sO_{X,K}. }
One can then define a morphism $\sA_{X,\P}\otimes \Omega\hbul_{\P} \to W\sO_{X,K}$ by
composition with the augmentation morphism of $\sA_{X,\P}\otimes \Omega\hbul_{\P}$
\cite[(4.5)]{BBE}. Using the identifications of \ref{ssTubes}, its image in $D^b(X,K)$
does not depend on the choice of the embedding $X \inj \P$, and defines a canonical
morphism $a_X$. Using \v{C}ech resolutions as usual, the construction of $a_X$ can be
extended to the general case where an embedding into some $(\P,F)$ might not exist.
Finally, the construction of $a_{X,U}$ follows using the functoriality of the
construction of $a_X$ with respect to $Z \inj X$.

\begin{lem}\label{ssaXY}
Let $Y$ be a smooth $k$-scheme, and let $Z \inj X \inj Y$ be closed subschemes. Then
the morphisms \eqref{fGamtoWBBE} and \eqref{fGamctoWBBE} are respectively equal to
\eqref{fGamtoW} and \eqref{fGamctoW}.
\end{lem}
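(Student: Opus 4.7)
\noindent\textit{Proof plan.} The plan is to reduce to the local setting where the closed subschemes $Z \inj X \inj Y$ all embed into a smooth formal $W$-scheme $\P$ equipped with a Frobenius lifting $F$, and then to compare the two constructions at the level of complexes. Since both $a_X$ and the morphism \eqref{fGamtoW} are built by first constructing a morphism of complexes in the local case and then globalizing via \v{C}ech resolutions associated with affine coverings of $Y$, and since the relevant functoriality properties hold on both sides, it suffices to establish the equality in the local case.

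In this local case, $a_X$ is the composition of the augmentation $\sA_{X,\P} \otimes_{\sO_{\P}} \Omd{\P} \to \sA_{X,\P}$ (the projection to degree $0$) with the ring homomorphism \eqref{fAtoWBBE}. On the other hand, the morphism \eqref{fGamtoW}, via the quasi-isomorphism \eqref{fMainF} and the morphism \eqref{fAWOmtoW}, is represented at the level of complexes by a morphism with source $\sA_{X,\P} \otimes_{\sO_{\P}} \Omd{\P}$ and target $W\sO_{X,K}$ concentrated in degree $0$. Since the target is concentrated in degree $0$, both morphisms factor through the augmentation, so it is enough to show that the two induced ring homomorphisms $\sA_{X,\P} \to W\sO_{X,K}$ coincide.

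Tracing through the construction of \eqref{fMainF}, its degree-$0$ component is the composition
\[ \sA_{X,\P} \riso \varprojlim_m \sPh(\sJ^{(m)})_K \lra \varprojlim_m \sPh(\sJ^{(m)} + \sK)_K \simeq \varprojlim_m (\sPh^{(m)\,T}_{X,Y} \otimes K) \lra \sA^W_{X,Y}, \]
where the second arrow is induced by the functoriality of PD-envelopes, the isomorphism is \eqref{fPmTn}, and the last arrow comes from evaluating each crystal $\sP^{(m)}_{X,Y}$ on the PD-morphism of thickenings $W_n Y \to T_n$ determined by $h_F$. Composing further with \eqref{fAWtoW}, one obtains on each finite level $m$ a PD-morphism $\sPh(\sJ^{(m)}) \to W\sO_X$, compatible with the canonical divided powers of $(p)$ on the source and of $VW_{n-1}\sO_X$ on the target, and extending the composite $\sO_{\P} \xra{s_F} W\sO_{\P} \to W\sO_X$. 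By its very definition via \ref{ssMaptoWn}, the BBE map \eqref{fAtoWBBE} is the unique such PD-morphism; hence by the universal property of divided power envelopes the two morphisms agree for every $m$, and therefore in the limit.

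The statement for $a_{X,U}$ then follows formally from the naturality of the equality $a_X = \eqref{fGamtoW}$ with respect to the closed immersion $Z \inj X$, since $a_{X,U}$ and \eqref{fGamctoW} are both built as mapping cones (shifted by $-1$) of the corresponding morphisms for $X$ and for $Z$, compatibly with the exact sequences \eqref{fExF} and \eqref{fExW}. The main obstacle will be the careful verification that the composition through $\sA^W_{X,Y}$ genuinely produces the desired PD-morphism extending $s_F$: this requires unraveling the inner steps of \eqref{fMainF} (in particular the passage through $\sPt$- and $\sPh$-envelopes and the \'Etesse-type quasi-isomorphism of Corollary \ref{ssCrysEtodRWE}) and identifying the crystal-theoretic transition map between the $T_n$- and $W_nY$-evaluations of $\sP^{(m)}_{X,Y}$ as being governed by $h_F$.
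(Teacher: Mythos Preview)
Your proposal is correct and follows essentially the same route as the paper: reduce to the local case with an embedding $Y \inj (\P,F)$, observe that both morphisms factor through the augmentation so that one must only compare the degree-$0$ maps $\sA_{X,\P} \to W\sO_{X,K}$, and then conclude by the universal property of PD-envelopes applied to morphisms extending $\sO_{\P} \xra{t_F} W\sO_P \to W\sO_X$; globalize via \v{C}ech resolutions and handle $X\setminus Z$ by functoriality. One small imprecision: the BBE map \eqref{fAtoWBBE} is not literally defined ``via \ref{ssMaptoWn}'' but in \cite[(4.4)--(4.7)]{BBE}; the paper's proof makes this explicit by quoting \cite[(4.7)]{BBE} to see that \eqref{fAtoWBBE} also begins with $\sA_{X,\P} \to \sPh(\sJ)_K$, after which the comparison reduces to the commutativity of the triangle $\sPh(\sJ) \to \sPh(W(\sI)) \to W\sO_X$, which is exactly the PD-envelope uniqueness you invoke.
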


\begin{proof}
We consider first the case where there exists a closed immersion $Y \inj \P$, where 
$\P$ is a smooth formal scheme over $W$ endowed with a Frobenius lifting $F$. Then
$a_{X}$ is defined as the image in $\Db(X,K)$ of the composition
\eqn{ a_{X,\P} : \sA_{X,\P} \otimes_{\sO_{\P}} \Omega\hbul_{\P} \lra \sA_{X,\P} 
\xra{\eqref{fAtoWBBE}} W\sO_{X,K}. }
On the other hand, \eqref{fGamtoW} is defined as the image of the composition 
\eqn{ \sA_{X,\P} \otimes_{\sO_{\P}} \Omega\hbul_{\P} 
\xrightarrow[\text{qis}]{\eqref{fMainF}} 
\sA^W_{X,Y} \otimesh_{W\sO_Y} \WOmd{}{Y} \lra \sA^W_{X,Y} \xra{\eqref{fAWtoW}} W\sO_{X,K}, }
where the middle arrow is the augmentation morphism. Thus it suffices to check the
commutativity of the diagram of morphisms of complexes 
\eq{fCompBBEloc}{ \xymatrix{  
\sA_{X,\P} \otimes_{\sO_{\P}} \Omega\hbul_{\P} \ar[r] \ar[d]_-{\eqref{fMainF}} 
& \sA_{X,\P} \ar[d]_-{\eqref{fMainF}} \ar[rd]^-{\text{\eqref{fAtoWBBE}}} &  \\
\sA^W_{X,Y} \otimesh_{W\sO_Y} \WOmd{}{Y} \ar[r] & \sA^W_{X,Y} 
\ar[r]^-{\eqref{fAWtoW}} &  W\sO_{X,K}. 
} }
Here the middle vertical arrow is the map induced in degree $0$ by \eqref{fMainF}, so  
that the square on the left hand side obviously commutes. 

To check the commutativity of the triangle, we keep the notation of \ref{ssProofMain}. 
Observe first that the composition of \eqref{fAWtoW} and \eqref{fMainF} 
can also be written as the composition 
\eqn{ \sA_{X,\P} \lra \sPh(\sJ)_K  \lra \sPh(W(\sI))_K \lra W\sO_{X,K},  }
where the first map is given by the isomorphism \eqref{fAPinvlim} composed with the
projection on the factor of index $0$, and the second one is induced by the morphism
\eqref{dREtodRWE} defined by the crystal structure of $i\cris{}_{\,*}\sO_{X/W}$. On the
other hand, \eqref{fAtoWBBE} is also defined as a composition of maps starting with the
homomorphism $\sA_{X,\P} \to \sPh(\sJ)_K$ (see \cite[(4.7)]{BBE}). Therefore, it
suffices to check the commutativity of the triangle 
\eqn{ \xymatrix{
\sPh(\sJ) \ar[d]_-{\eqref{dREtodRWE}} \ar[dr] \\
\sPh(W(\sI)) \ar[r] & W\sO_X. } } 
This is now a consequence of the universal property of PD-envelopes, as the oblique 
map is the canonical factorization of 
\eqn{ \sO_{\P} \xra{t_F} W\sO_P \lra W\sO_X, }
the vertical one is the canonical factorization of 
\eqn{ \sO_{\P} \xra{t_F} W\sO_P \lra W\sO_Y \lra \sPh(W(\sI)), }
and the horizontal one is the canonical factorization of $W\sO_Y \to W\sO_X$. 

In the general case, one chooses as usual an affine open covering $(U_{\alpha})$ of $Y$
and closed immersions $U_{\alpha} \inj \P_\alpha$ into smooth formal schemes
over $W$ endowed with Frobenius liftings $F_{\alpha}$. The previous argument can
then be repeated simplicially. Taking the image in $\Db(X,K)$ of the associated total
complexes, the first assertion of the lemma follows.

The assertion relative to $X \setminus Z$ follows similarly thanks to the functoriality
of diagram \eqref{fCompBBEloc} with respect to $Z \inj X$.
\end{proof}

\begin{prop}\label{ssCompBBE}
Let $X$ be a proper $k$-scheme, $U = X \setminus Z$ an open subset of $X$, and $X \inj 
Y$ a closed immersion into a smooth $k$-scheme. For each $q \geq 0$, there is a 
commutative diagram 
\eq{fCompBBE}{ \xymatrix{ 
H^q\rigc(U/K) \ar@{>>}[d]_-{\can} \ar[r]_-{\sim}^-{\eqref{fMaincGlob}} & 
H^q(X, \sA^{\cc,W}_{X\setminus Z,Y} \otimesh_{W\sO_Y} \WOmd{}{Y}) 
\ar[d]^-{\eqref{fAcWtoW}} \\
H^q\rigc(U/K)^{< 1} \ar[r]_-{\sim}^-{b^q_U} & H^q_{\cc}(U, W\sO_{U,K}),
} }
where the left vertical arrow is the projection on the slope $< 1$ part, and the lower 
isomorphism is the canonical identification between the slope $< 1$ part of rigid 
cohomology with compact supports and Witt vector cohomology with compact supports 
\cite[Theorem 1.1]{BBE}.
\end{prop}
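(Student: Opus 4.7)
The plan is to deduce Proposition~\ref{ssCompBBE} directly from Lemma~\ref{ssaXY} together with \cite[Theorem~1.1]{BBE}; no new geometric input should be needed. First, I would unwind the definition of the right vertical arrow in \eqref{fCompBBE}. By construction, the composition of the top horizontal isomorphism \eqref{fMaincGlob} with the arrow induced by \eqref{fAcWtoW} is the morphism on $H^q$ obtained by applying $\RR\Gamma(X,-)$ to the morphism \eqref{fGamctoW} of \ref{ssMortoW}; that is, it is exactly the morphism \eqref{fRigctoWc}.

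Next, by Lemma~\ref{ssaXY} applied to the chain of immersions $Z \inj X \inj Y$, the morphism \eqref{fGamctoW} coincides in $\Db(X,K)$ with the morphism $a_{X,U}$ of \cite[Proposition~4.4 and Theorem~4.5]{BBE}. Since $X$ is assumed proper, the target $\RR\Gamma(X, W\sH_K)$ is, by definition \cite[(2.12)]{BBE}, the Witt vector cohomology with compact supports $\RR\Gamma_{\cc}(U, W\sO_{U,K})$. Taking $H^q$ of $\RR\Gamma(X,-)$ applied to the equality of Lemma~\ref{ssaXY} therefore identifies the composition around the top and right of \eqref{fCompBBE} with the map
\[
H^q\rigc(U/K) \lra H^q_{\cc}(U, W\sO_{U,K})
\]
studied in \cite[Theorem~1.1]{BBE}.

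Finally, that theorem of \cite{BBE} states precisely that this morphism annihilates the slope $\geq 1$ part of $H^q\rigc(U/K)$, hence factors through the projection $H^q\rigc(U/K) \surj H^q\rigc(U/K)^{<1}$, and that the induced map $b^q_U$ is an isomorphism onto $H^q_{\cc}(U, W\sO_{U,K})$. This factorization is exactly the commutativity of the diagram \eqref{fCompBBE}.

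The main (and essentially only) obstacle is the identification already carried out in Lemma~\ref{ssaXY}: one must know that the two a priori distinct constructions of the comparison map to Witt vector cohomology — the direct one of \cite{BBE} built from a Frobenius lifting on $(\P,F)$ via $\sA_{X,\P} \to W\sO_{X,K}$, and the one of \ref{ssMortoW} obtained by composing \eqref{fMaincLoc} with the augmentation of $\sA^{\cc,W}_{X\setminus Z,Y} \otimesh_{W\sO_Y} \WOmd{}{Y}$ — agree in the derived category. Since this has already been established in Lemma~\ref{ssaXY} (by reduction to the universal property of PD-envelopes applied to $\sO_\P \xra{t_F} W\sO_P \to W\sO_X$), the present proposition becomes a direct assembly of Lemma~\ref{ssaXY} with the BBE theorem, followed by passage to global sections on the proper scheme $X$.
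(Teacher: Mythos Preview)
Your proof is correct and follows essentially the same approach as the paper: both identify the composition of \eqref{fMaincGlob} with the map induced by \eqref{fAcWtoW} as the morphism $a^q_U$ of \cite{BBE} via Lemma~\ref{ssaXY}, and then invoke the construction of $b^q_U$ in \cite{BBE} (the factorization of $a^q_U$ through the slope $<1$ projection) to obtain the commutativity of \eqref{fCompBBE}.
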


\begin{proof}
Let us first recall the construction of the isomorphism $b^q_U$ (see \cite[5.2]{BBE}).
Applying the functor $\RR\Gamma(X, -)$ to the morphism $a_{U,X}$ and passing to
cohomology in degree $q$, we obtain a homomorphism
\eqn{ a^q_U : H^q\rigc(U/K) \lra H^q_{\cc}(U, W\sO_{U,K}). }
As the slopes of the Frobenius action on $H^q_{\cc}(U, W\sO_{U,K})$ belong to $[0, 
1[$, $a^q_U$ vanishes on the subspaces of slope $\lambda$ of $H^q\rigc(U/K)$ for any 
$\lambda \notin [0,1[$. Thanks to the slope decomposition of $H^q\rigc(U/K)$, it 
follows that $a^q_U$ factorizes through $H^q\rigc(U/K)^{<1}$, which provides the
homomorphism $b^q_U$. But the previous lemma implies that $a^q_U$ is equal to the 
homomorphism induced on $H^q$ by \eqref{fRigctoWc}. As the latter is the composition of
the homomorphisms induced by \eqref{fMaincGlob} and \eqref{fAcWtoW}, this proves the
proposition.
\end{proof}

\begin{prop}\label{ssPartDegen}
Let $Y$ be a smooth $k$-scheme of finite type, and $X \inj Y$ a closed subscheme.

\romain The homomorphism $\sA^W_{X,Y} \to W\sO_{X,K}$ defined in \eqref{fAWtoW} 
is an epimorphism.

\romain Assume that $X$ is proper over $k$, and let $H^q\rig(X/K)^{\geq 1}$ denote the
component of slope $\geq 1$ of $H^q\rig(X/K)$. If we set
\eq{fFilCan}{ \Fil^1 (\sA^W_{X,Y} \otimesh_{W\sO_Y} \WOmd{}{Y}) := 
\Ker(\sA^W_{X,Y} \otimesh_{W\sO_Y} \WOmd{}{Y} \xra{\eqref{fAWOmtoW}} W\sO_{X,K}), }
the cohomology long exact sequence defined by \eqref{fFilCan} splits into short exact 
sequences 
\eq{fShortSeq}{ 0 \to H^q(X, \Fil^1 (\sA^W_{X,Y} \otimesh_{W\sO_Y} \WOmd{}{Y})) \to 
H^q\rig(X/K) \to H^q(X, W\sO_{X,K}) \to 0, }
which yield isomorphisms 
\eq{fHighSlopes}{ H^q(X, \Fil^1 (\sA^W_{X,Y} \otimesh_{W\sO_Y} \WOmd{}{Y})) \riso 
H^q\rig(X/K)^{\geq 1}. }
\end{prop}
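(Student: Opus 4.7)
For (i), the plan is to observe that \eqref{fAWtoW} factors through the natural ring homomorphism $W\sO_{Y,K} \to \sA^W_{X,Y}$ coming from the compatible family of structure maps $W\sO_Y \to \sPh^{(m)\,W}_{X,Y}$ (tensored with $K$); indeed each of these morphisms is a map of $W\sO_Y$-algebras, and they commute with the transition morphisms $\rhoh^{(m)}$. By construction, the composition $W\sO_{Y,K} \to \sA^W_{X,Y} \to W\sO_{X,K}$ is the canonical reduction map tensored with $K$. Since $W\sO_Y \surj W\sO_X$ is a surjection of Zariski sheaves (its kernel being $W\sI$) and tensoring with $K$ preserves surjectivity, it follows that \eqref{fAWtoW} is an epimorphism.

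For (ii), I would first derive from (i) a short exact sequence of complexes of sheaves of $K$-vector spaces on $X$
\[
0 \to \Fil^1(\sA^W_{X,Y} \otimesh_{W\sO_Y} \WOmd{}{Y}) \to \sA^W_{X,Y} \otimesh_{W\sO_Y} \WOmd{}{Y} \to W\sO_{X,K}[0] \to 0,
\]
where $W\sO_{X,K}$ is placed in degree $0$. The augmentation \eqref{fAWOmtoW} is automatically a morphism of complexes since the target vanishes in positive degrees, and exactness holds term by term: trivially in positive degrees, and in degree $0$ thanks to (i). The associated hypercohomology long exact sequence on $X$ reads
\[
\cdots \to H^q(X, \Fil^1) \to H^q(X, \sA^W_{X,Y} \otimesh_{W\sO_Y} \WOmd{}{Y}) \to H^q(X, W\sO_{X,K}) \to H^{q+1}(X, \Fil^1) \to \cdots,
\]
and by Theorem \ref{ssMain}\,(iii) the middle term identifies canonically with $H^q\rig(X/K)$.

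The crux is then to apply Proposition \ref{ssCompBBE} with $Z = \emptyset$: since $X$ is proper, $\RR\Gamma\rigc(X/K) = \RR\Gamma\rig(X/K)$, and the morphism \eqref{fAcWtoW} reduces to \eqref{fAWOmtoW}. Lemma \ref{ssaXY} identifies the induced map $H^q\rig(X/K) \to H^q(X, W\sO_{X,K})$ with the homomorphism $a^q_X$ of \cite{BBE}, and Proposition \ref{ssCompBBE} factors $a^q_X$ as the projection $H^q\rig(X/K) \surj H^q\rig(X/K)^{<1}$ followed by the isomorphism $b^q_X : H^q\rig(X/K)^{<1} \riso H^q(X, W\sO_{X,K})$. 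In particular $a^q_X$ is surjective, so every connecting homomorphism in the long exact sequence vanishes, splitting it into the short exact sequences \eqref{fShortSeq}. Moreover, the kernel of $a^q_X$ is exactly $H^q\rig(X/K)^{\geq 1}$ by the slope decomposition, which yields the required isomorphism \eqref{fHighSlopes}.

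The argument is essentially formal once the previous results are in hand: the real content is already packaged into Proposition \ref{ssCompBBE}, which supplies both the surjectivity needed to split the long exact sequence and the identification of the kernel with the higher-slope part. The only mild check I anticipate is verifying rigorously that the natural $W\sO_{Y,K}$-algebra structure on $\sA^W_{X,Y}$ commutes with the augmentation to $W\sO_{X,K}$ as claimed in (i), but this is immediate from the compatibility of the PD-structures with the surjection $W\sO_Y \surj W\sO_X$.
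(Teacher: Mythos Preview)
Your argument for (ii) is correct and is essentially the paper's own proof: both derive the long exact sequence from (i), invoke Proposition~\ref{ssCompBBE} (equivalently Lemma~\ref{ssaXY} together with \cite[Th.~1.1]{BBE}) to see that $H^q\rig(X/K) \to H^q(X, W\sO_{X,K})$ is the surjection onto the slope~$<1$ part, and conclude.

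For (i), your route is correct but genuinely different from the paper's. You observe that the canonical surjection $W\sO_{Y,K} \surj W\sO_{X,K}$ factors as $W\sO_{Y,K} \to \sA^W_{X,Y} \xra{\eqref{fAWtoW}} W\sO_{X,K}$ (your phrasing ``\eqref{fAWtoW} factors through\ldots'' has the direction reversed, but the intended meaning is clear), and this immediately forces \eqref{fAWtoW} to be an epimorphism. The paper instead works level by level: it shows each $\sPh^{(m)\,W}_{X,Y} \surj W\sO_{X^{(m)}}$ is surjective with kernel the completed PD-ideal $\overline{W\sI^{(m)}}$, invokes \cite[Prop.~2.1~(i)]{BBE} to identify all the $W\sO_{X^{(m)},K}$ with $W\sO_{X,K}$, and then proves the inverse system $(\overline{W\sI^{(m)}} \otimes K)_m$ is $\varprojlim$-acyclic via the topological Mittag-Leffler criterion (as in the proof of \eqref{fVanRlimA}). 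Your shortcut is more elementary and entirely sufficient for the statement as written; what the paper's longer argument buys is an explicit description of $\Ker\eqref{fAWtoW}$ as $\varprojlim_m(\overline{W\sI^{(m)}}\otimes K)$ together with the $\varprojlim$-acyclicity of that system, which is not needed for (ii) but is of independent interest.
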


\begin{proof}
Let $\sI$ be the ideal of $X$ in $Y$. We first fix $m \geq 0$. For each $n \geq 1$,
$\sP^{(m)\,W}_{X,Y,n}$ is the PD-envelope of the ideal $W_n\sI^{(m)}$ with
compatibility with the canonical divided powers of $VW_{n-1}\sO_Y$, hence the canonical
homomorphism $\sP^{(m)\,W}_{X,Y,n} \to W\sO_{X^{(m)}}$ is surjective, and its kernel 
is the PD-ideal $\overline{W_n\sI^{(m)}}$ generated by $W_n\sI^{(m)}$. It follows from 
\cite[3.20, 3)]{BO} that the homomorphisms $\overline{W_{n+1}\sI^{(m)}} \to 
\overline{W_n\sI^{(m)}}$ are surjective. As each $\overline{W_n\sI^{(m)}}$ has 
vanishing cohomology over affine open subsets, the inverse system 
$(\overline{W_n\sI^{(m)}})_n$ is $\varprojlim$-acyclic, hence the homomorphism 
\eqn{ \sPh^{(m)\,W}_{X,Y} \lra W\sO_{X^{(m)}} }
is an epimorphism, with kernel $\overline{W\sI^{(m)}} := \varprojlim_n
\overline{W_n\sI^{(m)}}$.

Therefore, we obtain for variable $m$ an exact sequence of inverse systems 
\eqn{ 0 \to \overline{W\sI^{(m)}} \otimes K \to \sPh^{(m)\,W}_{X,Y} \otimes K 
\to W\sO_{X^{(m)},K} \to 0. }
By \cite[Prop.\ 2.1, (i)]{BBE}, the transition morphisms $W\sO_{X^{(m+1)},K} \to
W\sO_{X^{(m)},K}$ are isomorphisms, hence assertion (i) follows if we show that the
inverse system $(\overline{W\sI^{(m)}} \otimes K)_m$ is $\varprojlim$-acyclic. The
algebraic Mittag-Leffler criterion implies that the terms of this inverse system have
vanishing cohomology on affine open subsets. It is then sufficient to check that, when
$Y$ is affine, the system $(\overline{W\sI^{(m)}} \otimes K)_m$ statisfies the
topological Mittag-Leffler criterion. This can be done as in the proof of
\eqref{fVanRlimA}, the only difference being that, in the family of generators described
in \eqref{fPDgen}, we must have $\uk \neq (0,\ldots,0)$.

Using assertion (i), the definition \eqref{fFilCan} of $\Fil^1(\sA^W_{X,Y}
\otimesh_{W\sO_Y} \WOmd{}{Y})$ provides a long exact sequence of cohomology, in which 
the middle terms can be identified with $H^q\rig(X/K)$ using \eqref{fMainGlob}. 
This sequence splits because diagram \eqref{fCompBBE} implies that the homomorphisms 
$H^q\rig(X/K) \to H^q(X, W\sO_{X,K})$ are surjective for all $q$. Then the last
asseertion follows from \cite[Th.\ 1.1]{BBE}.
\end{proof}

\begin{rmk}\label{ssLastRem}
The behaviour of the long exact sequence defined by \eqref{fFilCan} is of course
reminiscent of the degeneration of the slope spectral sequence for $H^*\rig(X/K) \simeq
H^*\cris(X/W)\otimes K$ when $X$ is proper and smooth over $k$ \cite[Th.\ 3.2, Cor.\
3.5]{Il79}.

It is therefore natural (but maybe too naive) to ask the following question: assuming 
$X$ proper, but maybe singular, can one generalize the definition \eqref{fFilCan} so as
to define a functorial filtration of $\sA^W_{X,Y} \otimesh_{W\sO_Y} \WOmd{}{Y}$ by
subcomplexes $\Fil^i (\sA^W_{X,Y} \otimesh_{W\sO_Y} \WOmd{}{Y})$, depending only on $X$
in $D^b(X,K)$ and quasi-isomorphic to $\WOm{}{\geq i}{X,K}$ when $X$ is smooth over
$k$, such that their cohomology computes the slope $\geq i$ component of $H^q\rig(X/K)$
for all $q$?

It should be pointed out here that the work of Nakkajima \cite{Nak} already provides a
construction of the slope spectral sequence for $H^*\rig(X/K)$, based on the de
Rham-Witt cohomology of a proper hypercovering of $X$ by smooth varieties. 
Nevertheless, a positive answer to the previous question would be interesting if the 
cohomology of the $\Fil^i$ was more accessible to computation. For example, one might 
hope to obtain generalizations of Katz's congruences mod $p^{a\kappa}$ for the number 
of rational points of $X$ when $X$ is a subvariety of the projective space over a
finite field with $p^a$ elements \cite{Ka71}. For $\kappa = 1$, these congruences are
indeed a consequence of the identification of the slope $< 1$ component of rigid
cohomology with Witt vector cohomology \cite[Cor.\ 1.5]{BBE}.
\end{rmk}

\bibliographystyle{plain}
\renewcommand\refname{References}

\end{document}